\newcommand{\eremk}{\hbox{}\hfill\rule{0.8ex}{0.8ex}}
\newcommand{\g}{g}
\newcommand{\V}{V}
\newcommand{\W}{W}
\newcommand{\vp}{v^+}
\newcommand{\vm}{v^-}
\newcommand{\Vg}{\V_\g}
\newcommand{\Vz}{\V_0}
\renewcommand{\a}{a}
\renewcommand{\b}{b}
\newcommand{\bE}{\b^\E}
\newcommand{\bn}{\b_\h}
\newcommand{\bnLcal}{\bn^{\Lcal}}
\newcommand{\bnH}{\bn^{H}}
\newcommand{\aE}{\a^\E}
\newcommand{\anDelta}{\a_\h^{\Delta}}
\newcommand{\anEDelta}{\a_\h^{\E, \Delta}}
\newcommand{\anLcal}{\a_\h^{\Lcal}}
\newcommand{\anELcal}{\a_\h^{\E, \Lcal}}
\newcommand{\anH}{\a_\h^{H}}
\newcommand{\anEH}{\a_\h^{\E, H}}
\newcommand{\E}{K}
\newcommand{\Ep}{\E^+}
\newcommand{\Em}{\E^-}
\newcommand{\e}{e}
\newcommand{\EE}{\mathcal E^\E}
\newcommand{\taun}{\mathcal T_\h}
\newcommand{\h}{h}
\newcommand{\hE}{\h_\E}
\newcommand{\he}{\h_\e}
\newcommand{\p}{p}
\newcommand{\pLcal}{\p^{\Lcal}}
\newcommand{\qp}{q_\p}
\newcommand{\qpDelta}{\qp^{\Delta}}
\newcommand{\VhDelta}{V_\h^\Delta}
\newcommand{\VhH}{V_\h^H}
\newcommand{\VhDeltag}{V_{\h,\g}^\Delta}
\newcommand{\VhDeltaz}{V_{\h,0}^\Delta}
\newcommand{\VhDeltaE}{\VhDelta(\E)}
\newcommand{\VhHE}{\VhH(\E)}
\newcommand{\uh}{u_\h}
\newcommand{\uhDelta}{\uh^\Delta}
\newcommand{\uhH}{\uh^H}
\newcommand{\uhLcal}{\uh^\Lcal}
\newcommand{\vh}{v_\h}
\newcommand{\vhDelta}{\vh^\Delta}
\newcommand{\vhH}{\vh^H}
\newcommand{\vhLcal}{\vh^{\Lcal}}
\newcommand{\n}{\mathbf n}
\newcommand{\nOmega}{\mathbf n_\Omega}
\newcommand{\nE}{\n_\E}
\newcommand{\nEp}{\n_{\Ep}}
\newcommand{\nEm}{\n_{\Em}}
\renewcommand{\ne}{\n_\e}
\newcommand{\malphae}{m_\alpha^\e}
\newcommand{\Eh}{\mathcal E_h}
\newcommand{\En}{\mathcal E_h}
\newcommand{\Vcaln}{\mathcal V_h}
\newcommand{\VcalE}{\mathcal V^\E}
\newcommand{\EhB}{\Eh^B}
\newcommand{\EhI}{\Eh^I}
\newcommand{\EnB}{\Eh^B}
\newcommand{\EnI}{\Eh^I}
\newcommand{\HncpDelta}{H^{nc,\Delta}_\p(\Omega,\taun)}
\newcommand{\HncpLcal}{H^{nc,\Lcal}_\p(\Omega,\taun)}
\newcommand{\HncpH}{H^{nc,H}_\p(\Omega,\taun)}
\newcommand{\uI}{u_I}
\newcommand{\uIDelta}{\uI^{\Delta}}
\newcommand{\uIH}{\uI^{H}}
\newcommand{\uILcal}{\uI^{\Lcal}}
\newcommand{\PinablaDeltap}{\Pi^{\nabla,\Delta}_\p}
\newcommand{\PiLcalp}{\Pi^{\nabla,\Lcal}_\p}
\newcommand{\PieLcal}{\Pi_p^{\e,\Lcal}}
\newcommand{\PiHp}{\Pi^{\nabla,H}_\p}
\newcommand{\PieH}{\Pi^{\e,H}_\p}
\newcommand{\SEDelta}{S^{\E,\Delta}}
\newcommand{\SELcal}{S^{\E,\Lcal}}
\newcommand{\SEH}{S^{\E,H}}
\newcommand{\Ncalh}{\mathcal N_h}
\newcommand{\Lcal}{\mathcal L}
\newcommand{\shLcal}{s^{\Lcal}_\p}
\newcommand{\ShLcal}{\mathcal S^{\Lcal}_\p}
\newcommand{\ShzLcal}{\mathcal S^{0,\Lcal}_\p}
\newcommand{\VhLcal}{V_\h^{\Lcal}}
\newcommand{\WhLcal}{W_\h^{\Lcal}}
\newcommand{\VhLcalE}{\VhLcal(\E)}
\newcommand{\subVhLcal}{\hat{V}_\h^{\Lcal}}
\newcommand{\subWhLcal}{\hat{W}_\h^{\Lcal}}
\newcommand{\G}{G}
\newcommand{\Gn}{\G_\h}
\newcommand{\GnLcal}{\Gn^{\Lcal}}
\renewcommand{\k}{k}
\newcommand{\kn}{\k_n}
\renewcommand{\i}{\mathrm{i}}
\newcommand{\im}{\i}
\newcommand{\Nbb}{\mathbb N}
\newcommand{\Rbb}{\mathbb R}
\newcommand{\Cbb}{\mathbb C}
\renewcommand{\wp}{w_\p}
\newcommand{\wpe}{w_\alpha^\e}
\newcommand{\well}{w_\ell}
\newcommand{\welle}{\well^\e}
\newcommand{\wellE}{\wp}
\newcommand{\PW}{\mathbb{PW}}
\newcommand{\PWp}{\PW_\p}
\newcommand{\PWpE}{\PWp(\E)}
\newcommand{\PWptaun}{\PWp(\Omega,\taun)}
\newcommand{\PWpe}{\PWp(\e)}
\newcommand{\Jcal}{\mathcal J}
\renewcommand{\d}{\mathbf d}
\newcommand{\dell}{\d_\ell}
\newcommand{\xbf}{\mathbf x}
\newcommand{\xbfE}{\xbf_\E}
\newcommand{\NPWe}{N_{\text{PW}}^\e}
\newcommand{\RE}{\mathbb{RE}}
\newcommand{\deltah}{\delta_\h}
\DeclareMathOperator{\Real}{Re}
\DeclareMathOperator{\Imag}{Im}
\DeclareMathOperator{\diam}{diam}
\newcommand{\xibold}{\boldsymbol{\xi}}
\newcommand{\lbold}{\boldsymbol{\ell}}
\newcommand{\mbold}{\textbf{m}}
\newcommand{\C}{\mathbb{C}}
\newcommand{\R}{\mathbb{R}}
\newcommand{\N}{\mathbb{N}}
\newcommand{\Z}{\mathbb{Z}}
\newcommand{\dbold}{\textup{\textbf{d}}}
\newcommand{\Vn}{\mathcal{V}_n}
\newcommand{\Vnh}{\widehat{\mathcal{V}}_n}
\newcommand{\lambdazero}{\lambda^{(0)}}
\newcommand{\lambdaone}{\lambda^{(1)}}
\newcommand{\lambdatwo}{\lambda^{(2)}}
\newcommand{\vn}{v_n} % VEM function
\newcommand{\x}{\textbf{\textup{x}}} % position
\newcommand{\unh}{\widehat{u}_n}
\newcommand{\vnh}{\widehat{v}_n}
\newcommand{\un}{u_n} % VEM function (also VEM solution)
\newcommand{\lin}{\textup{span}}
\newcommand{\an}{\a_n} % discrete partial sesquilinear form
\newcommand{\chih}{\widehat{\chi}}
\newcommand{\djj}{\textbf{\textup{d}}_j} % plane wave direction dj
\newcommand{\PWE}{\mathbb {PW}_\p(\E)} % plane wave space on \E
\newcommand{\wjE}{w_j^{\E}} % plane wave over E
\newcommand{\Pip}{\PieH} % L^2 projection edge-by-edge
\newcommand{\dir}{\textbf{\textup{d}}} % direction d
\newcommand{\gammatraceBound}{\mathsf{tr}_{\partial\Omega}}
\newcommand{\gammatrace}{\mathsf{tr}}
\newcommand{\gammatracee}{\gammatrace^e}
\newcommand{\gammatraceIe}{\gammatrace_I^\e}
\newcommand{\gammatraceIE}{\gammatrace_I^\E}
\newcommand{\betamin}{\beta_{\mathsf{min}}}
\theoremstyle{plain}
\newtheorem{thm}{Theorem}[section]
\newtheorem{cor}[thm]{Corollary}
\newtheorem{prop}[thm]{Proposition}
\theoremstyle{definition}
\newtheorem{defn}{Definition}[section]
\theoremstyle{remark}
\newtheorem{remark}{Remark}
\begin{document}
%%%%%%%%%%%%%%%%%%%%%%%%%%%%%%%%%%%%%
\title{The nonconforming Trefftz virtual element method: general setting, applications, and dispersion analysis for the Helmholtz equation}
\author{Lorenzo Mascotto \and Ilaria Perugia \and Alexander Pichler
\thanks{Faculty of Mathematics, University of Vienna, 1090 Vienna, Austria \texttt{lorenzo.mascotto@univie.ac.at}, \texttt{ilaria.perugia@univie.ac.at}, \texttt{alex.pichler@univie.ac.at}}}
\maketitle

\begin{abstract}
We present a survey of the nonconforming Trefftz virtual element method for the Laplace and Helmholtz equations.
For the latter, we present a new abstract analysis, based on weaker assumptions on the stabilization,
and numerical results on the dispersion analysis, including comparison with the plane wave discontinuous Galerkin method.
\end{abstract}

\medskip\noindent
\textbf{AMS subject classification}: 35J05; 65N12; 65N15; 65N30

\medskip\noindent
\textbf{Keywords}: virtual element methods; nonconforming methods; Trefftz methods; dispersion analysis.

%%%%%%%%%%%%%%%%%%%%%%%%%%%%%%%%%%%%%%%%%%%%%%%%%%%%%%%%%%%%%%%%%%%%%%%%%%%
\section{Introduction} \label{section:introduction}
%%%%%%%%%%%%%%%%%%%%%%%%%%%%%%%%%%%%%%%%%%%%%%%%%%%%%%%%%%%%%%%%%%%%%%%%%%%
In this chapter, we present a survey of a methodology, which dovetails the nonconforming virtual element setting with the Trefftz paradigm.

The \emph{nonconforming virtual element method} is an extension of the nonconforming finite element method to polytopal meshes, which is based on the virtual element method (VEM) framework.
Notably, the continuity constraint of functions in the approximation spaces are imposed in a weak sense only.
Since its inception~\cite{nonconformingVEMbasic}, the nonconforming VEM has received an increasing attention, and has been analysed and applied to several problems:
general elliptic problems~\cite{cangianimanzinisutton_VEMconformingandnonconforming};
Stokes and Navier-Stokes equations~\cite{CGM_nonconformingStokes, nc_VEM_NavierStokes, liu2017nonconformingStokes, zhao2019divergence, zhao2020nonconforming};
eigenvalue problems~\cite{gardini2019nonconforming}; the plate bending problem~\cite{zhao2016nonconforming};
equations involving the biharmonic and $2m$-th
operators~\cite{VEM_fullync_biharmonic, polyharmonic:VEM:LongChen};
anisotropic error estimates~\cite{chen_anisotropic_nonconforming};
the linear elasticity problem~\cite{zhang2019nonconforming};
parabolic and fractional-reaction subdiffusion problems~\cite{zhao2019nonconforming-parabolic, li2019nonconforming-subdiffusion};
the VEM with a SUPG stabilization for advection-diffusion-reaction~\cite{berrone2018supg}; fourth order singular perturbation problems~\cite{zhang2020nonconforming};
the Kirchhoff plate contact problem~\cite{wang2019conforming};
the medius error analysis for the Poisson and biharmonic problem~\cite{huang2020medius}.
Its comparison with other skeletal methods such as the hybridized discontinuous Galerkin method (HDG) and the hybrid high-order (HHO) method is investigated in~\cite{di2018discontinuous}.

\emph{Trefftz methods} are Galerkin-type methods for the approximation of linear partial differential equations (PDEs) with piecewise constant coefficients,
where the test and/or trial functions belong to the kernel of the differential operator defining the PDE to be approximated.
Trefftz methods have been applied mainly to time-harmonic wave propagation problems, but also to advection-diffusion problems and to wave problems in the time-domain.
Typically, Trefftz methods are obtained by combining these functions (\emph{Trefftz function}) with the discontinuous Galerkin method (dG) or with the partition of unity method (PUM).
Out of the former category, restricting ourselves to the Helmholtz problem, we recall several approaches, which trace back to the ultra weak variational formulation \cite{cessenatdespres_basic}:
 the wave based method \cite{wavebasedmethod_overview}; discontinuous methods based on Lagrange multipliers \cite{farhat2001discontinuous} and on least square formulation \cite{monk1999least};
the plane wave discontinuous Galerkin (PWDG) method \cite{GHP_PWDGFEM_hversion, TDGPW_pversion}; the variational theory of complex rays \cite{riou2008multiscale}; see \cite{PWDE_survey} for an overview of such methods.
We also mention the quasi-Trefftz dG method for the case of smoothly varying coefficients, where functions that ``almost'' belong to the kernel of the operator appearing in the PDE are employed~\cite{DespresImbert2014,ImbertGerard2015}.
Instead, the latter category consists of methods based on
approximation spaces of continuous functions given by the product of
pure Trefftz functions with partition of unity, low order, hat
functions.
Amongst them, we highlight the classical PUM~\cite{BabuskaMelenk_PUMintro,babumelenk_harmonicpolynomials_approx}
and its virtual element version~\cite{Helmholtz-VEM}. 

More recently, the Trefftz gospel has been combined with the nonconforming VEM setting for the Laplace equation~\cite{ncHVEM},
and the Helmholtz equation with constant~\cite{TVEM_Helmholtz, TVEM_Helmholtz_num} and piecewise constant wave number~\cite{fluidfluid_ncVEM}.
Albeit the nonconforming Trefftz VEM is not an $H^1$ conforming
method, the interelement continuity is imposed weakly within the approximation spaces, unlike in the dG setting. Moreover, unlike in the PUM setting, its basis functions are exacly Trefftz.
\medskip 

In this contribution, we review the methods presented in~\cite{ncHVEM, TVEM_Helmholtz, TVEM_Helmholtz_num}, and elaborate a common framework for nonconforming Trefftz VEMs.
We start by considering the simplest case of the Laplace equation in Section~\ref{section:Laplace}.
Then, we extrapolate the core idea of the nonconforming Trefftz VEM approach and extend it to general linear differential operators of the second order; see Section~\ref{section:general-framework}.
In Section~\ref{section:Helmholtz}, we recast the case of the Helmholtz equation studied in~\cite{TVEM_Helmholtz} into the setting of Section~\ref{section:general-framework}.
Additionally, we present a new abstract analysis of the method, which
is based on weaker assumptions on the stabilization than those in~\cite{TVEM_Helmholtz}.
While we refer to~\cite{TVEM_Helmholtz_num} for the implementation details and an extended numerical testing of the nonconforming Trefftz VEM for the Helmholtz problem,
we present in Section~\ref{section:NR-Helmholtz} unpublished work on its numerical dispersion analysis, where the performance of the nonconforming Trefftz VEM are compared to those of the PWDG method that have been studied in~\cite{gittelson}.

\paragraph*{Notation.}
We employ standard notation for Sobolev spaces. Given~$s\in \Nbb$ and
a domain~$\Omega$, we denote the Sobolev space of order~$s$ taking
values in the complex field~$\Cbb$ by~$H^s(\Omega)$.
In the special case~$s=0$, $H^s(\Omega)$ reduces to the Lebesgue space~$L^2(\Omega)$. We introduce the Sobolev sesquilinear forms, seminorms, and norms
\[
(\cdot, \cdot)_{s,\Omega}, \quad\quad\quad \vert \cdot \vert_{s,\Omega}, \quad\quad\quad \Vert \cdot \Vert_{s,\Omega}.
\]
We define Sobolev spaces of order~$s \in \Rbb$ by interpolation. Analogously, we denote the Sobolev spaces on~$\partial \Omega$ by~$H^s(\partial \Omega)$.
If we consider Sobolev spaces of functions taking values only in~$\Rbb$, we employ the same notation~$H^s(\Omega)$ thanks to the trivial embedding~$\Rbb \hookrightarrow \Cbb$.

Assume that the domain~$\Omega$ is Lipschitz. Then, we can define the standard Dirichlet trace operator~$\gammatraceBound: H^s(\Omega) \rightarrow H^{s- \frac{1}{2}}(\partial \Omega)$ for all~$s \in (1/2, 3/2)$.
Thanks to this operator, we are allowed to introduce affine Sobolev
spaces with boundary conditions: given~$g \in H^{\frac{1}{2}}(\partial \Omega)$,
\[
H^1_g(\Omega) := \left\{ v \in H^1(\Omega)  \mid \gammatraceBound(v) = g      \right\}.
\]

Henceforth, as standard in the VEM literature, a quantity is said to be \emph{computable} if it can be evaluated using the degrees of freedom of the trial and test spaces under consideration.

%%%%%%%%%%%%%%%%%%%%%%%%%%%%%%%%%%%%%%%%%%%%%%%%%%%%%%%%%%%%%%%%%%%%%%%%%%%
\section{Polygonal meshes and broken Sobolev spaces} \label{section:meshes}
%%%%%%%%%%%%%%%%%%%%%%%%%%%%%%%%%%%%%%%%%%%%%%%%%%%%%%%%%%%%%%%%%%%%%%%%%%%

We denote a family of polygonal meshes over a polygonal domain~$\Omega \subset \mathbb R^2$ by~$\{\taun\}_{h>0}$, and the sets of edges and vertices of $\taun$ by~$\En$ and~$\Vcaln$, respectively.
In particular, we split~$\En$ into the sets of boundary and internal
edges~$\EnB$ and~$\EnI$, respectively.
Given a polygon~$\E \in \taun$, we denote its barycenter, size, set of
edges, set of vertices, and outward normal to $\partial K$
by~$\xbfE$, $\hE$, $\EE$, $\VcalE$, and~$\nE$, respectively,
and given an edge~$\e \in \En$, we denote its size by~$\he$.

As customary in polygonal methods, we demand the following
shape-regularity assumption
on~$\{ \taun \}_{\h}$:
\begin{equation}\label{eq:shape-regularity}
  \begin{split}
    &\hspace{-0.5cm}\text{there exists a positive constant~$\gamma > 0$ such that, for all~$\E\in\taun$,}\\
  i)&\text{\ $\E \in \taun$ is star-shaped with respect to a ball of radius~$\gamma  \hE$};\\
 ii)&\text{\ every edge~$\e \in \EE$ is such that~$\he \le \hE \le \gamma \he$}.
\end{split}
\end{equation}
We introduce the broken Sobolev space associated with the mesh~$\taun$
\begin{equation} \label{broken:Sobolev}
H^{1}(\Omega,\taun) := \{ v \in L^2(\Omega) \mid v\in H^1(\E) \, \forall \E \in \taun   \},
\end{equation}
and endow it with the broken seminorm
\begin{equation} \label{broken-seminorm}
\vert v \vert ^2 _{1,\taun} := \sum_{\E \in \taun} \vert v_{\E} \vert_{1,\E} ^2 \quad \quad \forall v \in H^1(\Omega,\taun).
\end{equation}
Given~$\e \in \EhI$, we define the jump operator across~$\e$ as follows:
\begin{equation} \label{jump}
\llbracket v \rrbracket_\e =
\vp{}_{|\e} \ \nEp + \vm{}_{|\e} \ \nEm  	\qquad \text{if } \e\subset \partial \Ep\cap \partial \Em\\
\end{equation}
for all~$v$ in~$H^{1}(\Omega,\taun)$, where~$\vp:=v_{|_{\Ep}}$ and~$\vm:=v_{|_{\Em}}$.

%%%%%%%%%%%%%%%%%%%%%%%%%%%%%%%%%%%%%%%%%%%%%%%%%%%%%%%%%%%%%%%%%%%%%%%%%%%
\section{The nonconforming Trefftz virtual element method for the Laplace problem} \label{section:Laplace}
%%%%%%%%%%%%%%%%%%%%%%%%%%%%%%%%%%%%%%%%%%%%%%%%%%%%%%%%%%%%%%%%%%%%%%%%%%%

In this section, we focus on the approximation of a two dimensional
Laplace problem by means of the nonconforming Trefftz virtual element method that was originally introduced in~\cite{ncHVEM}; see also~\cite{conformingHarmonicVEM} for its conforming version.
%%%
\paragraph*{The continuous problem.}
%%%
Let~$\Omega \subset \Rbb^2$ be a polygonal domain and~$\g \in H^{\frac{1}{2}}(\partial \Omega)$.
Introduce the following notation:
\[
\Vg:= H^1_\g(\Omega), \quad\quad \Vz:=H^1_0(\Omega), \quad\quad \a(\cdot,\cdot) := (\nabla \cdot, \nabla\cdot)_{0,\Omega}.
\]
We consider the following Laplace problem: find a sufficiently smooth~$u:\Omega \rightarrow \Rbb$ such that
\[
\begin{cases}
\Delta u = 0 	& \text{in } \Omega \\
u = \g 		& \text{on } \partial \Omega,\\
\end{cases}
\]
which in weak formulation reads
\begin{equation} \label{Laplace:weak}
\begin{cases}
\text{find } u\in \Vg \text{ such that }\\
\a(u,v) = 0 \quad \quad \forall v \in \Vz. \\
\end{cases}
\end{equation}

%%%
\paragraph*{An explicit discontinuous space.}
%%%
Let~$\p \in \mathbb N$.
Given a sequence~$\{\taun\}_h$ of polygonal decompositions over~$\Omega$ as in Section~\ref{section:introduction},
we define the corresponding sequence of discontinuous, piecewise harmonic polynomials over~$\taun$:
\[
\mathcal S^{0,\Delta}_\p(\Omega,\taun) := \left\{ \qpDelta \in L^2(\Omega) \mid \qpDelta{}_{|\E} \in \mathbb H_\p(\E) \; \forall \E \in \taun      \right\},
\]
where, for all~$\E \in \taun$,
\[
\mathbb H_\p(\E) := \left\{ \qpDelta \in \mathbb P_\p(\E) \mid \Delta \qpDelta= 0   \right\}.
\]

We recall the following approximation property of discontinuous, piecewise harmonic polynomials for harmonic functions; see, e.g., \cite[Theorem~2.9]{melenk1999operator}.
\begin{prop} \label{proposition:best-harmonic}
Under the shape regularity assumption~\eqref{eq:shape-regularity} with constant~$\gamma$, given a harmonic function~$u \in H^{s+1}(\Omega)$, $s>0$, there exists~$\qpDelta \in \mathcal S^{0,\Delta}_\p(\Omega,\taun) $ such that
\[
\vert u - \qpDelta\vert_{1,h} \le c \h^{s} \Vert u \Vert_{s+1,\Omega}.
\]
The positive constant~$c$ depends on~$\gamma$ and on the polynomial degree~$\p$.
\end{prop}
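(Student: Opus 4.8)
The plan is to reduce the global estimate to a local one on each polygon and then invoke a classical polynomial approximation result for harmonic functions on star-shaped domains. First I would observe that, by the definition of the broken seminorm in~\eqref{broken-seminorm}, it suffices to construct, for each~$\E \in \taun$, a local harmonic polynomial~$\qpDelta{}_{|\E} \in \mathbb H_\p(\E)$ satisfying a local bound of the form~$\vert u - \qpDelta\vert_{1,\E} \le c\, \hE^{s}\, \Vert u \Vert_{s+1,\omega_\E}$, where~$\omega_\E$ is either~$\E$ itself or a mild enlargement of it; summing the squares over all~$\E \in \taun$ and using finite overlap of the patches~$\{\omega_\E\}$ then yields the claimed global estimate with~$\h := \max_{\E} \hE$ on the right-hand side.

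For the local estimate I would use \cite[Theorem~2.9]{melenk1999operator} (or the analogous operator-interpolation / analytic-regularity estimates for harmonic functions): since~$u$ is harmonic on~$\Omega$ it is in particular harmonic on each~$\E$, and that theorem provides a harmonic polynomial of degree~$\p$ approximating~$u$ on~$\E$ with the stated algebraic rate in~$\hE$, the constant depending only on the shape-regularity parameter~$\gamma$ and on~$\p$. The key point that makes this applicable is assumption~\eqref{eq:shape-regularity}(i): each~$\E$ is star-shaped with respect to a ball of radius~$\gamma\hE$, which is exactly the geometric hypothesis under which the harmonic polynomial approximation bounds hold with a constant controlled by~$\gamma$. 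Assumption~\eqref{eq:shape-regularity}(ii) is then used only to ensure that~$\hE$ and the local mesh size are comparable, so that the powers of~$\hE$ can be replaced by powers of the global~$\h$.

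The main obstacle, and the only genuinely delicate point, is the bookkeeping of the Sobolev norm on the right-hand side: the cited local result is naturally stated in terms of a norm of~$u$ over~$\E$ (or a slightly larger neighbourhood), and one must check that summing these local norms reproduces~$\Vert u \Vert_{s+1,\Omega}$ without an~$\h$-dependent loss. If the local bound genuinely only needs~$\Vert u\Vert_{s+1,\E}$, the sum is immediate; if it needs an enlarged patch, one must invoke the bounded-overlap property of such patches, which follows from shape regularity. A secondary technical issue is the treatment of the fractional index~$s$ (and fractional~$s+1$), where the norms are defined by interpolation; here one applies the approximation estimate at the neighbouring integer orders and interpolates, the interpolation constant again being harmless. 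Once these points are in place, defining~$\qpDelta$ as the piecewise assembly of the local harmonic polynomials gives an element of~$\mathcal S^{0,\Delta}_\p(\Omega,\taun)$ and completes the proof.
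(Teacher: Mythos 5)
The paper gives no proof of this proposition; it simply cites the local harmonic polynomial approximation result of \cite[Theorem~2.9]{melenk1999operator}, and your elementwise application of that result followed by summation over the mesh is exactly the intended (and standard) argument. Your proposal is correct and essentially the same approach; note only that Melenk's local estimate is stated on the star-shaped element itself, so the norm on the right-hand side is $\Vert u\Vert_{s+1,\E}$ and no enlarged-patch/overlap argument is actually needed.
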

The importance of Proposition~\ref{proposition:best-harmonic} resides in the fact that there exists a subset of the space of piecewise polynomials of degree at most~$\p$ having optimal approximation properties for harmonic functions.
This subset is the space of piecewise harmonic polynomials of degree at most~$\p$, whose local dimension in 2D is~$2\p+1$, while the local
dimension of the space of complete polynomials of degree at most~$\p$ is~$(\p+1)(\p+2)/2$.

%%%
\paragraph*{Design of the VE Trefftz space.}
%%%
Here, we recall from~\cite{ncHVEM} the definition of local and global nonconforming Trefftz spaces for the Laplace problem.
Given~$\E \in \taun$, define
\begin{equation} \label{local-hVEM-space}
\begin{split}
\VhDeltaE := \{ & \vhDelta \in H^1(\E) \mid \Delta \vhDelta = 0 \text{ in } \E,\\
					&  \forall \e \in \EE\ \exists \qpDelta \in \mathbb H _\p^\Delta(\E)\ \text{s.t.}\ \ne \cdot \nabla \vhDelta{}_{|\e}  = \ne \cdot \nabla \qpDelta {}_{|\e}\}.
\end{split}
\end{equation}
Equivalently, we are requiring that the Neumann traces of functions in~$\VhDeltaE$ belong to~$\mathbb P_{\p-1} (\e)$ for all~$\e \in \EE$.
It is more convenient to define~$\VhDeltaE$ as in~\eqref{local-hVEM-space} in view of the general setting presented in Section~\ref{section:general-framework} below.

The idea behind the definition of~$\VhDeltaE$ is as follows.
According to the Trefftz gospel, we consider a local space, which consists of Trefftz functions, i.e., harmonic functions in our case.
A possible way to pick a finite dimensional subspace~$\VhDeltaE$ is to require that, on each~$\e\in\EE$, a suitable trace of any element in~$\VhDeltaE$ belongs to a suitable explicit finite element space.
In our case, we require that the Neumann traces belong to $\mathbb P_{\p-1}(\e)$, the space of polynomials of degree at most~$\p-1$ ($\dim(\mathbb P_{\p-1}(\e))=p$).
By doing this, harmonic polynomials are included in the space~$\VhDeltaE$, which yields good approximation properties; see Proposition~\ref{proposition:best-interpolation-harmonic} below.

For any edge~$\e \in \EE$, let~$\{ \malphae \}_{\alpha=1}^\p$ be a
basis of~$\mathbb P_{\p-1}(\e)$.
Consider the following set of linear functionals on $\VhDeltaE$:
\begin{equation} \label{polynomial-edge-dof}
\vhDelta \in \VhDeltaE\ \mapsto\ 
\frac{1}{\he} \int_\e \vhDelta \malphae \quad \quad\quad \forall \alpha=1,\dots,\p,\; \forall \e \in \EE.
\end{equation}
\begin{prop} \label{proposition:unisolvency-Laplace}
The set of functionals in~\eqref{polynomial-edge-dof} is a set of unisolvent degrees of freedom.
\end{prop}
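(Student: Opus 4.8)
The plan is to check the two defining properties of a unisolvent set of degrees of freedom: that the number of functionals in~\eqref{polynomial-edge-dof} equals~$\dim\VhDeltaE$, and that no nonzero element of~$\VhDeltaE$ is annihilated by all of them. Writing~$N_\E$ for the number of edges of~$\E$, the functionals in~\eqref{polynomial-edge-dof} number~$\p\,N_\E$, since~$\{\malphae\}_{\alpha=1}^{\p}$ is a basis of~$\mathbb P_{\p-1}(\e)$ for each~$\e\in\EE$. For the dimension I would use the equivalent characterization noted right after~\eqref{local-hVEM-space}: an element of~$\VhDeltaE$ is a harmonic function on~$\E$ whose Neumann trace on each edge~$\e\in\EE$ belongs to~$\mathbb P_{\p-1}(\e)$. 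The trace map~$\vhDelta\mapsto(\ne\cdot\nabla\vhDelta{}_{|\e})_{\e\in\EE}$ therefore sends~$\VhDeltaE$ into~$\prod_{\e\in\EE}\mathbb P_{\p-1}(\e)$; its kernel consists of the harmonic functions with vanishing Neumann trace, i.e. the constants (a one-dimensional space), and its range is, by the solvability theory of the Neumann Laplace problem on the (Lipschitz) polygon~$\E$, exactly the subspace~$\{(g_\e)_\e : \sum_{\e\in\EE}\int_\e g_\e = 0\}$ of codimension one. By rank--nullity, $\dim\VhDeltaE = (\p\,N_\E - 1) + 1 = \p\,N_\E$, matching the number of functionals. (Alternatively, this dimension count may be cited directly from~\cite{ncHVEM}.)

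It then suffices to prove injectivity of the map assigning to~$\vhDelta\in\VhDeltaE$ its values under the functionals~\eqref{polynomial-edge-dof}. Suppose~$\vhDelta$ makes all of them vanish. Since~$\{\malphae\}_{\alpha=1}^{\p}$ spans~$\mathbb P_{\p-1}(\e)$, this means~$\int_\e \vhDelta\,q = 0$ for every~$q\in\mathbb P_{\p-1}(\e)$ and every~$\e\in\EE$. Using that~$\vhDelta$ is harmonic, Green's identity gives
\[
\vert \vhDelta \vert_{1,\E}^2 = \int_{\partial\E}\vhDelta\,(\nE\cdot\nabla\vhDelta)\ds = \sum_{\e\in\EE}\int_\e \vhDelta\,(\ne\cdot\nabla\vhDelta{}_{|\e})\ds .
\]
By the defining property of~$\VhDeltaE$, each~$\ne\cdot\nabla\vhDelta{}_{|\e}$ lies in~$\mathbb P_{\p-1}(\e)$, so every term on the right vanishes by the orthogonality relations just obtained; hence~$\vert \vhDelta \vert_{1,\E}=0$ and~$\vhDelta\equiv c$ is constant on~$\E$. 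Testing the orthogonality relation on one fixed edge~$\e$ against~$q\equiv1\in\mathbb P_{\p-1}(\e)$ (here~$\p\ge1$) yields~$0=\int_\e c = c\,\he$, so~$c=0$ and~$\vhDelta\equiv0$. Together with the dimension count, the functionals~\eqref{polynomial-edge-dof} are therefore unisolvent.

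The injectivity step is an elementary integration-by-parts argument; the real content is the dimension computation, namely identifying the kernel of the Neumann-trace map as the one-dimensional space of constants and its range as the mean-zero subspace of~$\prod_{\e\in\EE}\mathbb P_{\p-1}(\e)$. This is where existence and uniqueness (up to constants) for the Neumann Laplace problem with prescribed piecewise-polynomial data on~$\E$ is needed, and it is the one place where one must go beyond the material in this section; I would either recall this solvability result explicitly or import the equality~$\dim\VhDeltaE=\p\,N_\E$ from~\cite{ncHVEM}, after which the argument above closes.
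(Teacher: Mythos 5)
Your proof is correct and follows essentially the same route as the paper: the key step is the identical integration-by-parts argument, exploiting harmonicity and the fact that the Neumann traces lie in $\mathbb P_{\p-1}(\e)$, to show that a function annihilated by all the moments in~\eqref{polynomial-edge-dof} has zero gradient and hence vanishes. The only difference is that you make explicit the dimension count $\dim\VhDeltaE=\p\,N_\E$ via the Neumann-trace map and the solvability of the Neumann problem, whereas the paper simply asserts that the number of functionals does not exceed $\dim\VhDeltaE$ and lets injectivity close the gap; both versions rely on the same underlying fact.
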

\begin{proof}
The proof is standard and can be found, e.g., in \cite[Section~3.1]{ncHVEM}. For the sake of completeness, we recall it here.
The number of the functionals in~\eqref{polynomial-edge-dof} is smaller than or equal to the dimension of~$\VhDeltaE$.
Thus, it suffices to show the unisolvence of such a set of functionals.

Assume that~$\vhDelta\in \VhDeltaE $ has the moments in~\eqref{polynomial-edge-dof} all equal to zero. Then, we have
\[
\int_{\partial \E} \vhDelta = 0.
\]
Consequently, in order to prove the unisolvence, i.e., that~$\vhDelta=0$, it is enough to show that~$\vhDelta$ has zero gradient.
This is a consequence of an integration by parts, and the fact
that~$\vhDelta$ is harmonic, that~$\nE \cdot \nabla \vhDelta {}_{|\e}$
is a polynomial of degree at most~$\p-1$ on each edge~$\e \in \EE$, and that the functionals in~\eqref{polynomial-edge-dof} are zero:
\begin{equation} \label{IBP-harmonic}
  \vert \vhDelta \vert_{1,\E}^2
  =-\int_\E\Delta \vhDelta \,\vhDelta+\int_{\partial\E} \nE \cdot \nabla \vhDelta \, \vhDelta
  = \sum_{\e \in \EE}\int_{\e} \nE \cdot \nabla \vhDelta \, \vhDelta = 0.
\end{equation}
\end{proof}
In the proof of Proposition~\ref{proposition:unisolvency-Laplace}, the choice of the local polynomial traces in the definition of~\eqref{local-hVEM-space} is important.
More precisely, we fixed polynomial Neumann traces, for they appear in the integration by parts~\eqref{IBP-harmonic}.
At the same time, the choice of the ``Dirichlet''-type degrees of freedom in~\eqref{polynomial-edge-dof} is relevant as well, and will play a role in the construction of the global space.
\medskip

We define the infinite dimensional, nonconforming spaces
{\[
\HncpDelta \!\! :=\!\! \left\{ v \in H^{1}(\Omega,\taun)\! \mid
    \! \int_\e \llbracket v \rrbracket_\e \cdot \ne \, \malphae =0 \ \, \forall \alpha=1,\dots,\p,\,  \forall \e \in \EhI  \right\}\!,
\]

where the broken Sobolev spaces~$H^{1}(\Omega,\taun)$ and the jump operator~$\llbracket \cdot \rrbracket$ are defined
in~\eqref{broken:Sobolev} and~\eqref{jump}, respectively, and
we introduce the global nonconforming Trefftz virtual element space for the Laplace problem:
\[
\VhDelta := \left\{  \vhDelta \in \HncpDelta \mid \vhDelta{}_{|\E} \in \VhDeltaE \; \forall \E \in \taun   \right\}.
\]
We obtain the set of global degrees of freedom of the space~$\VhDelta$ by patching the local ones in~\eqref{polynomial-edge-dof}.
In particular, we use the Dirichlet edge moments of~\eqref{polynomial-edge-dof} in the definition of the infinite dimensional, nonconforming space~$\HncpDelta$ in order to weakly impose the interelement continuity.\medskip

We summarize the main features of the space $\VhDelta$, highlighting a ``duality'' between Dirichlet moments and local Neumann traces, as follows.
\begin{center}
  \framebox[\textwidth]{Trefftz spaces $\quad$ {\bf contain} $\quad$ harmonic functions}\par
\framebox[\textwidth]{nonconformity $\quad$
 {\bf imposed through} $\quad$ Dirichlet moments}\par
\framebox[\textwidth]{unis. of DOFs in~\eqref{polynomial-edge-dof} $\quad$ {\bf implied by} $\quad$ pol. Neumann traces in~\eqref{local-hVEM-space}}
\end{center}

For the design of the method, we define spaces that incorporate Dirichlet boundary conditions. More precisely, given~$\g \in H^{\frac{1}{2}}(\partial \Omega)$, we define
\[
\begin{split}
& \VhDeltag := \left\{ \vhDelta \in \VhDelta \mid \int_\e (\vhDelta - \g) \malphae =0 \quad \forall \e \in \EhB,\, \forall \alpha=1,\dots,\p    \right\} .    \\
\end{split}
\]
The seminorm $\vert\cdot \vert_{1,\taun} $ defined in~\eqref{broken-seminorm} is actually a norm in $\VhDeltaz$.

%%%
\paragraph*{Interpolation properties.}
%%%
An interesting property of the nonconforming Trefftz virtual element
space for the Laplace problem is that, given a harmonic function~$u \in H^1(\Omega)$, there exists~$\uIDelta \in \VhDelta$,
which approximates~$u$ better than any \emph{discontinuous},
  piecewise harmonic
polynomial of degree at most $\p$.
This property was originally shown in~\cite[Proposition~3.1]{ncHVEM}.
\begin{prop} \label{proposition:best-interpolation-harmonic}
Given a harmonic function~$u \in H^1(\Omega)$, there exists~$\uIDelta \in \VhDelta$ such that
\[
\vert u - \uIDelta \vert_{1,\taun} \le \vert u - \qpDelta \vert_{1,\taun} \quad \quad \forall \qpDelta \in \mathcal S^{0,\Delta}_\p(\Omega,\taun) ,
\]
where the broken Sobolev seminorm is defined in~\eqref{broken-seminorm}.
\end{prop}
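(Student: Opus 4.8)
The plan is to construct the interpolant $\uIDelta$ directly from the degrees of freedom~\eqref{polynomial-edge-dof} and then to deduce the estimate from an elementwise Pythagorean identity. Fix $\E\in\taun$. Since $u\in H^1(\Omega)\hookrightarrow H^1(\E)$, the moments $\tfrac1{\he}\int_\e u\,\malphae$ are well defined for all $\e\in\EE$ and $\alpha=1,\dots,\p$, so by the unisolvency in Proposition~\ref{proposition:unisolvency-Laplace} there is a unique $\uIDelta{}_{|\E}\in\VhDeltaE$ attaining exactly these moments. Because $u$ has a single-valued trace on every internal edge, the jump moments of the resulting piecewise function vanish,
\[
\int_\e\llbracket\uIDelta\rrbracket_\e\cdot\ne\,\malphae=\int_\e(\uIDelta^+-\uIDelta^-)\,\malphae=\int_\e(u-u)\,\malphae=0\qquad\forall\,\e\in\EhI,\ \forall\,\alpha,
\]
hence $\uIDelta\in\HncpDelta$ and therefore $\uIDelta\in\VhDelta$.

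The heart of the matter is the elementwise orthogonality
\[
\int_\E\nabla(u-\uIDelta)\cdot\nabla(\uIDelta-\qpDelta{}_{|\E})=0\qquad\forall\,\E\in\taun,\ \forall\,\qpDelta\in\mathcal S^{0,\Delta}_\p(\Omega,\taun).
\]
Harmonic polynomials of degree at most $\p$ belong to $\VhDeltaE$ (their Neumann traces restrict to polynomials of degree at most $\p-1$ on each edge), so $w:=\uIDelta{}_{|\E}-\qpDelta{}_{|\E}\in\VhDeltaE$; in particular $\Delta w=0$ in $\E$ and $\nE\cdot\nabla w{}_{|\e}\in\mathbb P_{\p-1}(\e)$ for every $\e\in\EE$. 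Integrating by parts so as to move all derivatives onto $w$, and using that $u$ and $\uIDelta$ are harmonic,
\[
\int_\E\nabla(u-\uIDelta)\cdot\nabla w=-\int_\E(u-\uIDelta)\,\Delta w+\sum_{\e\in\EE}\int_\e(u-\uIDelta)\,\nE\cdot\nabla w=\sum_{\e\in\EE}\int_\e(u-\uIDelta)\,\nE\cdot\nabla w.
\]
Expanding $\nE\cdot\nabla w{}_{|\e}$ in the basis $\{\malphae\}_{\alpha=1}^\p$ of $\mathbb P_{\p-1}(\e)$ turns each edge integral into a linear combination of the moments $\int_\e(u-\uIDelta)\,\malphae$, all of which vanish by the defining property of $\uIDelta$. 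This is exactly the ``duality'' between the polynomial Neumann traces hard-wired into $\VhDeltaE$ and the Dirichlet moments~\eqref{polynomial-edge-dof}.

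It then remains to conclude. Splitting $u-\qpDelta=(u-\uIDelta)+(\uIDelta-\qpDelta)$ on each $\E$ and invoking the orthogonality above,
\[
\vert u-\qpDelta\vert_{1,\E}^2=\vert u-\uIDelta\vert_{1,\E}^2+\vert\uIDelta-\qpDelta\vert_{1,\E}^2\ \ge\ \vert u-\uIDelta\vert_{1,\E}^2,
\]
and summing over $\E\in\taun$ and recalling the definition~\eqref{broken-seminorm} of $\vert\cdot\vert_{1,\taun}$ yields the assertion.

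The main obstacle is getting the orthogonality step right: one must integrate by parts in the direction that leaves the \emph{polynomial} Neumann trace of $\uIDelta-\qpDelta$ (available only because this difference lies in the local space $\VhDeltaE$, which in turn uses that harmonic polynomials of degree at most $\p$ belong to $\VhDeltaE$) paired against the Dirichlet trace of $u-\uIDelta$, which is annihilated by the defining moments of the interpolant. The opposite integration by parts would produce $\nE\cdot\nabla(u-\uIDelta)$, which is not polynomial, and the cancellation would be lost; all remaining manipulations are routine.
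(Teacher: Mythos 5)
Your proof is correct and rests on the same mechanism as the paper's: the interpolant defined by the Dirichlet moments~\eqref{polynomial-edge-dof}, combined with integration by parts against the polynomial Neumann traces built into $\VhDeltaE$ (noting $\mathbb H_\p(\E)\subset\VhDeltaE$). The only cosmetic difference is that you package the computation as a Galerkin-type orthogonality followed by a Pythagorean identity, whereas the paper first integrates by parts to expose $\nE\cdot\nabla(u-\uIDelta)$, swaps the Neumann data of $\uIDelta$ for that of $\qpDelta$ using the vanishing moments, integrates back, and concludes with Cauchy--Schwarz; both routes give the constant $1$.
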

\begin{proof}
Define~$\uIDelta \in \VhDelta$ as the interpolant of~$u$, i.e.,
\begin{equation} \label{definition-uIDelta}
\int_\e (u-\uIDelta) \malphae=0 \quad \quad \forall \e \in \Eh, \; \forall \alpha=1,\dots,\p,
\end{equation}
and let $\qpDelta$ be any function in $\mathcal
  S^{0,\Delta}_\p(\Omega,\taun)$.
For any $\E \in \taun$, since both $(\nE \cdot \nabla \uIDelta){}_{|_e}$
and $(\nE \cdot \nabla \qpDelta){}_{|_e}$
 belong to $\mathbb P_{\p-1}(\e)$ for all $\e\in\EE$,
 definition~\eqref{definition-uIDelta} implies
 \begin{equation}\label{eq:revised}
\int_e \nE \cdot \nabla \uIDelta (u-\uIDelta)=
 \int_e \nE \cdot \nabla \qpDelta (u-\uIDelta)=0\qquad\forall \e\in\EE.
   \end{equation}
Therefore,
by integrating by parts twice 
and using~\eqref{eq:revised}, as well as $\Delta u=\Delta \uIDelta=\Delta \qpDelta=0$,
we deduce that
\[
\begin{split}
\vert u - \uIDelta \vert_{1,\E}^2
& = -\int_{\E} \underbrace{\Delta (u-\uIDelta)}_{=0} \, (u-\uIDelta) +
\sum_{\e\in\EE} \int_{\e}
\nE \cdot \nabla  (u-\uIDelta)
\, (u-\uIDelta)  \\
& \overset{\eqref{eq:revised}}{=} -\int_{\E} \underbrace{\Delta (u-\qpDelta)}_{=0} \, (u-\uIDelta) +
\sum_{\e\in\EE} \int_{\e}
\nE \cdot \nabla (u-\qpDelta)
\, (u-\uIDelta)  \\
						& = (\nabla(u-\qpDelta), \nabla(u-\uIDelta))_{0,\E} \le \vert u-\qpDelta\vert_{1,\E} \vert u - \uIDelta \vert_{1,\E},
\end{split}
\]
whence the assertion follows.
\end{proof}

%%%
\paragraph*{Projections and stabilizations.}
%%%
For future convenience, split
\[
\a(u,v) = \sum_{\E \in \taun} \aE(u_{|\E}, v_{|\E}) :=  \sum_{\E \in \taun} \left( \nabla (u_{|\E}), \nabla (v_{|\E}\right))_{0,\E}.
\]
Since the functions in the virtual element space~$\VhDelta$ are not
known in closed form, we cannot compute the local bilinear forms~$\aE(\cdot, \cdot)$ applied to functions in~$\VhDelta$.
Rather, we introduce computable bilinear forms as in the standard virtual element approach of~\cite{VEMvolley}.
\medskip

To this aim, we need two main ingredients. The first one is a local projection into harmonic polynomial spaces.
Define~$\PinablaDeltap : \VhDeltaE \rightarrow \mathbb H_\p(\E)$ as follows:
\begin{equation} \label{harmonic-Pinabla}
\begin{cases}
\aE(\vhDelta - \PinablaDeltap \vhDelta , \qpDelta) = 0\\
\int_{\partial \E} (\vhDelta - \PinablaDeltap \vhDelta)  = 0
\end{cases}
\quad \quad \forall \qpDelta \in \mathbb H_\p(\E),\; \forall
\vhDelta\in \VhDeltaE.
\end{equation}
This is a typical VEM projection. Here, we project into the subspace of harmonic polynomials of degree at most~$\p$ whereas, in the standard setting~\cite{VEMvolley}, the projection is into the full space of polynomials of degree at most~$\p$.

The definition of the degrees of freedom in~\eqref{polynomial-edge-dof} allows us to compute the projector~$\PinablaDeltap$.
This is clear for the second condition in~\eqref{harmonic-Pinabla}.
As for the first condition, we observe that
\[
\aE(\PinablaDeltap\vhDelta, \qpDelta)= \aE(\vhDelta, \qpDelta) = - \int_\E \vhDelta\, \underbrace{\Delta \qpDelta}_{=0} + \sum_{\e\in\EE} \int_\e \vhDelta \, \underbrace{\nE \cdot \nabla \qpDelta}_{\in \mathbb P_{\p-1}(\e)},
\]
where the right-hand side is computable using~\eqref{polynomial-edge-dof}.
\medskip

The second ingredient is a computable stabilization on each element,
which is needed since the bilinear form~$\aE(\cdot, \cdot)$ is not computable on~$\ker(\PinablaDeltap) \times \ker(\PinablaDeltap)$.
More precisely, for all~$\E \in \taun$, let~$\SEDelta: \ker(\PinablaDeltap) \times \ker(\PinablaDeltap) \rightarrow \Rbb$ be a bilinear form
that is computable via the degrees of freedom in~\eqref{polynomial-edge-dof} and that satisfies the following property:
there exist two positive constant~$\alpha_*$ and~$\alpha^*$
independent of the mesh size
such that
\begin{equation} \label{stabilization-harmonic}
\alpha_* \vert \vhDelta \vert_{1,\E}^2 \le \SEDelta (\vhDelta, \vhDelta) \le \alpha^* \vert \vhDelta \vert_{1,\E}^2 \quad \quad \forall \vhDelta \in \ker(\PinablaDeltap).
\end{equation}
We allow~$\alpha_*$ and~$\alpha^*$ to depend on the shape regularity constant~$\gamma$ in~\eqref{eq:shape-regularity}.

Then, we define
\[
\begin{split}
\anDelta (\uhDelta,\vhDelta) 	& := \sum_{\E \in \taun} \anEDelta (\uhDelta{}_{|\E}, \vhDelta{}_{|\E}) \\
						& := \sum_{\E \in \taun} \aE (\PinablaDeltap \uhDelta{}_{|\E}, \PinablaDeltap \vhDelta{}_{|\E}) \\
						& \quad + \SEDelta ( (I-\PinablaDeltap) \uhDelta{}_{|\E} , (I-\PinablaDeltap) \vhDelta{}_{|\E}  ).
\end{split}
\]
As in~\cite{VEMvolley, ncHVEM}, the discrete bilinear form~$\anDelta(\cdot, \cdot)$ is coercive and continuous with constants~$\min(1,\alpha_*) $ and~$\max(1,\alpha^*)$.

\begin{remark} 
We refer to~\cite[Section~3.3]{ncHVEM} for an explicit stabilization satisfying~\eqref{stabilization-harmonic}.
There, stability bounds are proven, which are explicit also in terms of the polynomial degree .
\eremk
\end{remark}

%%%
\paragraph*{The method.}
%%%
We have introduced all the ingredients needed for the design of the nonconforming Trefftz VEM for the Laplace problem:
\begin{equation} \label{ncHVEM}
\begin{cases}
\text{find } \uhDelta \in \VhDeltag \text{ such that}\\
\anDelta(\uhDelta, \vhDelta) = 0 \quad \quad \forall \vhDelta \in \VhDeltaz.
\end{cases}
\end{equation}
The well-posedness of the method follows from the coercivity and the continuity of the discrete bilinear form~$\anDelta(\cdot, \cdot)$.

%%%
\paragraph*{Convergence analysis.}
%%%
The abstract error analysis of method~\eqref{ncHVEM} is carried out in~\cite[Theorem~3.3]{ncHVEM} and is based on the second
Strang's lemma.
The result is that the error of the method is controlled by the sum of two terms: the best approximation error in the space of \emph{discontinuous}, piecewise polynomials and a term that
measures the nonconformity of the method. The latter is expressed in terms of the bilinear form~$\Ncalh: H^1(\Omega) \times
\HncpDelta \rightarrow \Rbb$ defined as
\begin{equation} \label{nonconforming-bf}
\Ncalh (u,v) = \sum_{\e \in \Eh} \int_\e \nabla u \cdot \llbracket v \rrbracket.
\end{equation}
\begin{thm}\label{th:abstractLaplace}
Let~$u$ and~$\uhDelta$ be the solutions to~\eqref{Laplace:weak}
and~\eqref{ncHVEM}, respectively.
Under the shape regularity assumption~\eqref{eq:shape-regularity}, the following bound is valid:
\[
\vert u - \uh \vert_{1,\taun} \le \frac{\alpha^*}{\alpha_*} \left\{  6 \inf_{\qpDelta \in \mathcal S^{0,\Delta}_\p(\Omega,\taun)} \vert u -\qpDelta \vert_{1,\taun} + \sup_{0\not=\vhDelta \in \VhDeltaz} \frac{\Ncalh (u,\vhDelta)}{\vert \vhDelta \vert_{1,\taun}}   \right\}.
\]
\end{thm}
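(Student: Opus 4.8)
The plan is to apply the second Strang lemma for nonconforming methods, adapted to the virtual element setting where the bilinear form is replaced by the computable form~$\anDelta$. The starting point is to pick an arbitrary $\vhDelta \in \VhDeltaz$ and split the error as $\vert u - \uhDelta \vert_{1,\taun} \le \vert u - \vhDelta \vert_{1,\taun} + \vert \vhDelta - \uhDelta \vert_{1,\taun}$, and then to estimate the discrete part $\vert \vhDelta - \uhDelta \vert_{1,\taun}$ using the coercivity of $\anDelta(\cdot,\cdot)$ with constant $\min(1,\alpha_*)$. Testing the definition of $\anDelta$ against $\wn := \vhDelta - \uhDelta \in \VhDeltaz$ and using the discrete equation $\anDelta(\uhDelta,\wn)=0$ gives $\min(1,\alpha_*)\vert \wn\vert_{1,\taun}^2 \le \anDelta(\vhDelta - \uhDelta, \wn) = \anDelta(\vhDelta,\wn)$.

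The next step is to relate $\anDelta(\vhDelta,\wn)$ back to the exact bilinear form and then to the continuous problem. First I would insert the exact form: write $\anDelta(\vhDelta,\wn) = \big(\anDelta(\vhDelta,\wn) - \a(\vhDelta,\wn)\big) + \a(\vhDelta,\wn)$. The first difference is a consistency-type term and is controlled by the polynomial consistency of $\PinablaDeltap$ and the stabilization bounds~\eqref{stabilization-harmonic}: on each element, for any harmonic polynomial $\qpDelta$, $\anEDelta(\qpDelta,\wn)=\aE(\qpDelta,\wn)$, so $\anEDelta(\vhDelta,\wn)-\aE(\vhDelta,\wn) = \anEDelta(\vhDelta - \qpDelta, \wn) - \aE(\vhDelta - \qpDelta, \wn)$, and both terms are bounded by $\max(1,\alpha^*)\,\vert \vhDelta - \qpDelta\vert_{1,\E}\,\vert \wn\vert_{1,\E}$; summing over $\E$ and optimizing over $\qpDelta$ (and over $\vhDelta$ in the final infimum) produces the best-approximation term. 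For $\a(\vhDelta,\wn)$, I would integrate by parts elementwise using $\Delta u = 0$: since $u$ solves the continuous problem, $\a(\vhDelta,\wn) = \a(\vhDelta - u, \wn) + \a(u,\wn)$, and $\a(u,\wn) = \sum_{\E}\int_{\partial\E}\nE\cdot\nabla u\, \wn = \sum_{\e\in\Eh}\int_\e \nabla u\cdot\llbracket \wn\rrbracket = \Ncalh(u,\wn)$, which is exactly the nonconformity term (the boundary edges drop since $\wn\in\VhDeltaz$ has vanishing boundary moments against $\mathbb P_{\p-1}(\e)$ and $\nabla u$ on $\partial\Omega$ is not polynomial in general — here one uses instead that $\wn$ vanishes in the appropriate weak sense, or absorbs it; I would check that $\Ncalh$ restricted to $\VhDeltaz$ only sees interior contributions). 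The remaining term $\a(\vhDelta - u,\wn)$ is bounded by $\vert u - \vhDelta\vert_{1,\taun}\vert\wn\vert_{1,\taun}$.

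Collecting the estimates, dividing through by $\vert\wn\vert_{1,\taun}$, and using $\vert u - \vhDelta\vert_{1,\taun} \le \vert u - \qpDelta\vert_{1,\taun} + \vert\qpDelta - \vhDelta\vert_{1,\taun}$ together with the interpolation estimate of Proposition~\ref{proposition:best-interpolation-harmonic} (to realize $\vhDelta$ as the interpolant and bound $\vert\qpDelta - \vhDelta\vert_{1,\taun}$ by a constant times $\vert u - \qpDelta\vert_{1,\taun}$), one assembles the constant $\alpha^*/\alpha_*$ and the numerical factor $6$. The main obstacle I anticipate is the careful bookkeeping of constants: one must track how $\min(1,\alpha_*)$ and $\max(1,\alpha^*)$ enter at each use of coercivity and continuity, how the triangle inequalities compound the best-approximation term, and how the choice $\vhDelta = \uIDelta$ (the interpolant) via Proposition~\ref{proposition:best-interpolation-harmonic} keeps the interpolation contribution bounded by the same best-approximation quantity rather than introducing a new term — so that the final bound collapses to precisely the two stated terms with the claimed constants. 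A secondary subtlety is justifying that only interior edges contribute to $\Ncalh(u,\vhDelta)$ for $\vhDelta\in\VhDeltaz$, which follows from the definition of $\VhDeltaz$ and $\HncpDelta$.
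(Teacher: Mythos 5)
Your proposal is correct and follows essentially the same route the paper indicates for this result, namely the second Strang lemma combined with polynomial consistency of $\anEDelta$, the stability bounds~\eqref{stabilization-harmonic}, elementwise integration by parts to produce $\Ncalh$, and Proposition~\ref{proposition:best-interpolation-harmonic} to absorb the interpolation error into the best-approximation term (the paper itself defers the detailed bookkeeping to \cite[Theorem~3.3]{ncHVEM}). The only slip is that the comparison function must be chosen in the affine space $\VhDeltag$ (e.g., $\vhDelta=\uIDelta$, which lies there because $u=\g$ on $\partial\Omega$), not in $\VhDeltaz$, so that $\vhDelta-\uhDelta$ belongs to the test space $\VhDeltaz$.
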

As a consequence of Theorem~\ref{th:abstractLaplace}, Proposition~\ref{proposition:best-harmonic}, and estimates of~$\Ncalh$ derived by standard computations that are typical in nonconforming Galerkin methods,
the convergence of the method follows; see~\cite[Section~3.5]{ncHVEM} for more details.
\begin{cor}
Let~$u$ and~$\uhDelta$ be the solutions to~\eqref{Laplace:weak} and~\eqref{ncHVEM}, respectively, with~$u \in H^{s+1}(\Omega)$.
Under the shape regularity assumption~\eqref{eq:shape-regularity} with constant~$\gamma$,
the following convergence result is valid:
\[
\vert u - \uh \vert_{1,\taun} \le c \h ^s \Vert u \Vert_{s+1,\Omega}.
\]
Here, $c$ is a positive constant, which depends on~$\gamma$ and on the polynomial degree~$\p$.
\end{cor}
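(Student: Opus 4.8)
The plan is to derive the result as a direct consequence of the abstract estimate in Theorem~\ref{th:abstractLaplace}, by bounding its two terms separately. Since $u$ solves~\eqref{Laplace:weak}, it is harmonic on $\Omega$, so Proposition~\ref{proposition:best-harmonic} applied with $u\in H^{s+1}(\Omega)$ immediately gives $\inf_{\qpDelta\in\mathcal S^{0,\Delta}_\p(\Omega,\taun)}\vert u-\qpDelta\vert_{1,\taun}\le c\,\h^{s}\Vert u\Vert_{s+1,\Omega}$ with $c=c(\gamma,\p)$. What remains is to show that the nonconformity term satisfies $\sup_{0\neq\vhDelta\in\VhDeltaz}\Ncalh(u,\vhDelta)/\vert\vhDelta\vert_{1,\taun}\le c\,\h^{s}\Vert u\Vert_{s+1,\Omega}$; plugging both bounds into Theorem~\ref{th:abstractLaplace} then yields the claim, the final constant also absorbing the factor $\alpha^{*}/\alpha_{*}$, which depends only on $\gamma$.

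The heart of the argument is therefore the estimate of $\Ncalh(u,\vhDelta)=\sum_{\e\in\En}\int_{\e}\nabla u\cdot\llbracket\vhDelta\rrbracket$. The key point is that, by definition of $\HncpDelta$ (hence of $\VhDelta$) on internal edges and of $\VhDeltaz$ on boundary edges, one has $\int_{\e}\llbracket\vhDelta\rrbracket\cdot\ne\,q=0$ for every $q\in\mathbb P_{\p-1}(\e)$ and every $\e\in\En$. Since $u\in H^{1}(\Omega)$ has a single-valued gradient, this allows one to subtract, edge by edge, the $L^{2}(\e)$-best approximation $q^{u}_{\e}\in\mathbb P_{\p-1}(\e)$ of $\nabla u\cdot\ne$, obtaining $\Ncalh(u,\vhDelta)=\sum_{\e}\int_{\e}(\nabla u\cdot\ne-q^{u}_{\e})\,\llbracket\vhDelta\rrbracket\cdot\ne$. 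On each $\e$ I would then apply Cauchy--Schwarz and estimate the two factors: for the first, a polynomial approximation estimate for the normal trace of $\nabla u$ on $\e$ with the correct scaling, $\Vert\nabla u\cdot\ne-q^{u}_{\e}\Vert_{0,\e}\le c\,\hE^{s-1/2}\Vert u\Vert_{s+1,\omega_{\e}}$, where $\omega_{\e}$ is the element (or pair of elements) abutting $\e$; for the second, using that $\int_{\e}\llbracket\vhDelta\rrbracket\cdot\ne=0$ (constants lie in $\mathbb P_{\p-1}(\e)$ since $\p\ge1$), one subtracts the edge mean and invokes a scaled trace--Poincar\'e inequality on the star-shaped elements to get $\Vert\llbracket\vhDelta\rrbracket\cdot\ne\Vert_{0,\e}\le c\,\hE^{1/2}\vert\vhDelta\vert_{1,\omega_{\e}}$. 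Multiplying these, summing over $\e\in\En$, using $\he\sim\hE\sim\h$ and the $\gamma$-bounded number of edges per element from~\eqref{eq:shape-regularity}, and a final discrete Cauchy--Schwarz over the elements gives $\Ncalh(u,\vhDelta)\le c\,\h^{s}\Vert u\Vert_{s+1,\Omega}\vert\vhDelta\vert_{1,\taun}$.

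The step I expect to be the main obstacle is exactly the edge estimate for $\nabla u\cdot\ne-q^{u}_{\e}$ together with the $h$-explicit trace inequalities. Making the trace and Poincar\'e constants explicit in $\hE$ rather than in the diameter alone is where the star-shapedness and the edge/element size comparability in~\eqref{eq:shape-regularity} enter, via a standard scaling to a reference configuration and a Bramble--Hilbert/compactness argument. The delicate sub-case is the low-regularity range $0<s\le1/2$: there $\nabla u\cdot\ne$ need not belong to $L^{2}(\e)$ a priori, and one must instead exploit that $\nabla u$ is divergence-free, so that the normal trace is a well-defined functional amenable to an $H(\mathrm{div})$-trace argument combined with interpolation between the integer-order estimates; for $s>1/2$ the bound follows directly from the trace theorem and polynomial approximation on $\e$. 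These are precisely the ``standard computations typical in nonconforming Galerkin methods'' referred to above the statement, and can be imported verbatim from~\cite{ncHVEM}; combining them with Theorem~\ref{th:abstractLaplace} and Proposition~\ref{proposition:best-harmonic} completes the proof, with $c=c(\gamma,\p)$.
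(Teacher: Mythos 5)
Your proposal is correct and follows exactly the route the paper itself takes: the corollary is obtained by combining Theorem~\ref{th:abstractLaplace} with Proposition~\ref{proposition:best-harmonic} for the best-approximation term and with the standard nonconformity estimate of $\Ncalh$ (subtracting edgewise polynomial approximations of $\nabla u\cdot\ne$, which is admissible thanks to the vanishing jump moments, and using scaled trace inequalities), for which the paper simply refers to~\cite[Section~3.5]{ncHVEM}. Your write-up in fact supplies more detail than the paper does, including the correct treatment of the low-regularity range $0<s\le 1/2$.
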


\emph{Overall, the nonconforming Trefftz VEM for the Laplace problem is a modification of the standard nonconforming VEM, in the sense that it encodes certain properties of the solution to the problem within the definition of the VE spaces.
 The resulting method has significantly fewer degrees of freedom than a standard VEM based on complete polynomial spaces, yet keeping the same convergence properties.}

%%%%%%%%%%%%%%%%%%%%%%%%%%%%%%%%%%%%%%%%%%%%%%%%%%%%%%%%%%%%%%%%%%%%%%%%%%%
\section{General structure of nonconforming Trefftz virtual element methods} \label{section:general-framework}
%%%%%%%%%%%%%%%%%%%%%%%%%%%%%%%%%%%%%%%%%%%%%%%%%%%%%%%%%%%%%%%%%%%%%%%%%%%

In this section, we pinpoint the structure lying behind the nonconforming Trefftz VEM for the Laplace equation and extend it to a more general and abstract setting.
In particular, given a homogeneous, linear partial differential
equation, we highlight which ideas we can extend to the new setting and which not.
For the sake of presentation, we assume that the solution to the involved partial differential equation has to be sought in~$H^1$-type Sobolev spaces with values in the field of complex numbers~$\Cbb$,
although generalizations to other problems are possible as well.

%%%
\paragraph*{The continuous problem.}
%%%
Let~$\Omega \subset \Rbb^2$ be a polygonal domain and~$\g \in H^{s}(\partial \Omega)$, where~$s\in \Rbb$.
In typical cases, we have~$s\in \{-1/2, 1/2\}$. 
Let~$\Lcal: H^1(\Omega) \rightarrow H^{-1}(\Omega)$ be a linear differential operator of the second order and~$\gammatraceBound:H^1(\Omega) \rightarrow H^s(\partial \Omega)$, $s$ as above, a trace operator.

Consider the following \emph{abstract} problem: find  $u:\Omega \rightarrow \Cbb$  such that
\[
\begin{cases}
\Lcal u = 0 				& \text{in } \Omega \\
\gammatraceBound (u)=\g 	& \text{on } \partial \Omega,\\
\end{cases}
\]
which, in weak formulation, reads
\begin{equation} \label{abstract:problem}
\begin{cases}
\text{find } u\in \V \text{ such that }\\
\a(u,v) + \b(u,v)= \G(v) \quad \quad \forall v \in \W. \\
\end{cases}
\end{equation}
Here, we have introduced an affine space~$\V \subseteq H^1(\Omega)$, a test space~$\W \subseteq H^1(\Omega)$,
sesquilinear forms~$\a: \V \times \W \rightarrow \Cbb$ and~$\b: \V \times \W \rightarrow \Cbb$, and an antilinear functional~$\G: \W \rightarrow \Cbb$.  
The form~$\b(\cdot, \cdot)$ and the functional~$\G(\cdot)$ accommodate the treatment of several types of boundary conditions.
In particular, they are defined only on~$\partial \Omega$.
In what follows, we assume that~$\a(\cdot,\cdot)$, $\Lcal$, and~$\gammatraceBound$ are related by the following identity: for sufficiently smooth~$u$ and~$v$,
\begin{equation} \label{assumption:abstract}
\a(u,v) = -(\Lcal u, v)_{0,\Omega} + \G(v) - \b(u,v).
\end{equation}
After splitting
\[
\a(u,v) = \sum_{\E \in \taun} \aE(u{}_{|\E},v{}_{|\E}),
\]
we demand that, for all~$\E \in \taun$ and all sufficiently smooth~$u$ and~$v$,
{\begin{equation} \label{assumption:abstract:local}
\aE(u{}_{|\E},v{}_{|\E}) \!=\! -(\Lcal u{}_{|\E}, v{}_{|\E})_{0,\E} \!+\!\! \sum_{\e\in \EE}(\gammatracee(u{}_{|\E}), v{}_{|\E})_{0,\e} \!-\bE(u{}_{|\E}, v{}_{|\E}),
\end{equation}
where~$\gammatracee$ denotes the restriction
to an edge~$\e \in \Eh$
of a trace operator~$\gammatrace$, which is not necessarily of the same type as $\gammatraceBound$, 
and where~$\bE(\cdot, \cdot)$ is a local sequilinear form.

For example, in Section~\ref{section:Laplace}, we fixed
\[
\begin{split}
& \V = H^1_\g(\Omega),\qquad \W = H^1_0(\Omega), \qquad \Lcal = \Delta,\qquad \gammatrace = \text{Dirichlet trace operator} ,\\
& \a(u,v) = (\nabla u, \nabla v)_{0,\Omega}, \qquad \b(u,v)=0, \qquad \G(v) = 0.
\end{split}
\]
In the abstract formulation~\eqref{abstract:problem}, we can deal with the boundary datum~$\g$ by tuning either the trial space~$\V$ or the right-hand side~$\G(v)$.

%%%
\paragraph*{An explicit discontinuous space.}
%%%
The basic tool in the construction of a Trefftz VEM is the
existence of finite dimensional space consisting of globally discontinuous, piecewise smooth functions,
which lie in the kernel of the operator~$\Lcal$ and possess suitable approximation properties for solutions to problem~\eqref{abstract:problem}.
We denote such approximation space by~$\ShzLcal(\Omega,\taun)$ and its local counterpart on every element~$\E$ by~$\ShLcal(\E)$,
where the index $\p\in\mathbb N$ is related to the local space dimension.

For instance, in Section~\eqref{section:Laplace}, we considered
as~$\ShzLcal(\Omega,\taun)$ the space of \emph{discontinuous}, piecewise
harmonic polynomials of degree at most~$\p$, which has optimal approximation properties in terms of the mesh size; see Proposition~\ref{proposition:best-harmonic}.

%%%
\paragraph*{Design of the VE Trefftz space.}
%%%
Given~$\E \in \taun$, we define the local Trefftz virtual element space on~$\E$ as follows:
\begin{equation} \label{local-abstract-VEM-space}
\begin{split}
\VhLcalE := \{ \vhLcal & \in H^1(\E) \mid  \; \Lcal (\vhLcal) = 0 \text{ in } \E,\\
& \forall \e \in \EE\ \exists \shLcal \in \ShLcal (\E)\ \text{s.t.}\  \gammatracee(\vhLcal{}_{|\e})  =  \gammatracee (\shLcal{}_{|\e}) \}.
\end{split}
\end{equation}
The idea behind the construction of~$\VhLcalE$ hinges upon the existence of an infinite dimensional, local space, which consists of functions in the kernel of the operator~$\Lcal$ (Trefftz space).
We define the finite dimensional subspace $\VhLcalE$ by requiring that, on each $e\in\EE$, a suitable trace belongs to a suitable explicit finite element space having good approximation properties for functions in the kernel of $\Lcal$.
More precisely, we require that the trace $\gammatracee$ on each edge~$\e \in \EE$
of any function in~$\VhLcalE$ belongs to~$\gammatracee(\ShLcal(\E))$.
In this way, we include the functions in~$\ShLcal(\E)$ within the space~$\VhLcalE$.
The hope is that this will yield good interpolation properties of the
local virtual element space.

In the setting of Section~\ref{section:Laplace}, we obtained a local space of harmonic functions, whose Neumann trace~$\gammatracee$
belongs to the space of Neumann traces of harmonic polynomials on every edge~$\e$, namely to~$\mathbb P_{\p-1}(\e)$.

As for the degrees of freedom, for all edges~$\e \in \EE$, let~$\{ \malphae \}_{\alpha=1}^{\pLcal}$ be a basis of~$\gammatrace(\ShLcal(\E))$.
Consider the following set of antilinear functionals on $\VhLcalE$:
\begin{equation} \label{abstract-edge-dof}
\vhLcal \in \VhLcalE\ \mapsto\  c(\he) \int_\e \vhLcal \overline\malphae \quad \quad \forall \alpha=1,\dots,\pLcal,\; \forall \e \in \EE,
\end{equation}
where~$c(\he)$ is a constant depending only on~$\he$ and providing a suitable scaling of the degrees of freedom.

We aim at getting the following result. 
\begin{prop} \label{proposition:unisolvency-abstract}
The set of functionals in~\eqref{abstract-edge-dof} is a set of unisolvent degrees of freedom.
\end{prop}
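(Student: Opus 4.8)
The plan is to mimic the structure of the proof of Proposition~\ref{proposition:unisolvency-Laplace}, now in the abstract setting. First I would reduce the problem to showing unisolvence alone: the cardinality of the functional set in~\eqref{abstract-edge-dof} equals $\pLcal \cdot \#\EE$, which by construction is an upper bound for (or equal to) $\dim(\VhLcalE)$, since every function in $\VhLcalE$ is uniquely determined by the $\#\EE$ polynomial traces $\gammatracee(\shLcal{}_{|\e}) \in \gammatrace(\ShLcal(\E))$ together with the fact that $\Lcal(\vhLcal)=0$ on $\E$ (assuming well-posedness of the associated local boundary value problem). So it suffices to prove that if $\vhLcal\in\VhLcalE$ has all the functionals in~\eqref{abstract-edge-dof} equal to zero, then $\vhLcal = 0$.

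The key step is the integration-by-parts identity~\eqref{assumption:abstract:local}. Taking $u = v = \vhLcal$ in that identity and using $\Lcal(\vhLcal)=0$ in $\E$ gives
\[
\aE(\vhLcal,\vhLcal) = \sum_{\e\in\EE}(\gammatracee(\vhLcal{}_{|\e}), \vhLcal{}_{|\e})_{0,\e} - \bE(\vhLcal,\vhLcal).
\]
By the defining property of $\VhLcalE$, on each edge $\e$ we have $\gammatracee(\vhLcal{}_{|\e}) = \gammatracee(\shLcal{}_{|\e})$ for some $\shLcal\in\ShLcal(\E)$, i.e.\ the edge trace $\gammatracee(\vhLcal{}_{|\e})$ lies in $\gammatrace(\ShLcal(\E))$, which is spanned by $\{\malphae\}_{\alpha=1}^{\pLcal}$. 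Hence each $L^2(\e)$-inner product $(\gammatracee(\vhLcal{}_{|\e}),\vhLcal{}_{|\e})_{0,\e}$ is a linear combination of the moments $\int_\e \vhLcal \overline{\malphae}$ (up to the scaling $c(\he)$), which all vanish by assumption. Therefore $\aE(\vhLcal,\vhLcal) = -\bE(\vhLcal,\vhLcal)$. At this point, the argument is complete provided one has a coercivity/positivity property: if $\aE$ is an inner product (as for the Laplace case, where $\aE(\cdot,\cdot)=|\cdot|_{1,\E}^2$ and $\bE=0$) and $\bE$ is sign-compatible, one concludes $\vhLcal$ is constant, and then a single vanishing moment (the $\alpha$ corresponding to the constant function in $\gammatrace(\ShLcal(\E))$, or $\int_{\partial\E}\vhLcal = 0$) forces $\vhLcal=0$.

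The main obstacle — and the reason this can only be stated as a target (\emph{``We aim at getting the following result''}) rather than proved in full generality — is that the final positivity step depends on unstated structural hypotheses on $\aE$, $\bE$, and $\ShLcal(\E)$: one needs $\aE(\cdot,\cdot) + \bE(\cdot,\cdot)$ to be positive (or at least injective) on $\VhLcalE$ modulo the lowest-order mode, and one needs $\gammatrace(\ShLcal(\E))$ to contain enough functions (in particular a nonzero constant, or whatever is needed to kill the kernel of $\aE$) for the degrees of freedom to detect it. For the Helmholtz case treated in Section~\ref{section:Helmholtz} these will have to be verified separately, since the Helmholtz sesquilinear form is not coercive and the argument must instead invoke a unique-continuation or well-posedness property of the local impedance problem. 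So the honest plan is: carry out the edge-moment reduction abstractly via~\eqref{assumption:abstract:local} exactly as above, thereby isolating the single remaining requirement — positivity of $\aE + \bE$ together with detectability of the kernel mode by~\eqref{abstract-edge-dof} — and then note that this requirement must be checked instance by instance, as will be done for Helmholtz.
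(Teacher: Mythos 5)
Your proposal follows essentially the same route as the paper: the paper likewise reduces the claim to injectivity, uses~\eqref{assumption:abstract:local} together with $\Lcal\vhLcal=0$ and the vanishing of the moments~\eqref{abstract-edge-dof} to obtain $\aE(\vhLcal,\vhLcal)+\bE(\vhLcal,\vhLcal)=0$, and then explicitly leaves the final positivity/injectivity step as a problem-dependent hypothesis to be verified instance by instance (Proposition~\ref{proposition:unisolvency-Laplace} for Laplace, Proposition~\ref{proposition:unisolvency-Helmholtz} for Helmholtz, where a mesh-size condition is additionally needed). The one discrepancy is your dimension count: for ``injectivity implies unisolvence'' one needs the number of functionals to be \emph{at most} $\dim(\VhLcalE)$, as the paper asserts, whereas your argument that each function is determined by its edge traces yields the reverse inequality $\dim(\VhLcalE)\le \pLcal\cdot\#\EE$, which together with injectivity would not exclude an over-determined set.
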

The number of the functionals in~\eqref{abstract-edge-dof} is smaller than or equal to the dimension of~$\VhLcalE$.
Thus, in order to prove
Proposition~\ref{proposition:unisolvency-abstract}, it suffices to
show the unisolvence of such a set of functionals.

Using assumption~\eqref{assumption:abstract:local}, we get
\[
\aE(\vhLcal,\vhLcal) + \bE(\vhLcal,\vhLcal) = -(\underbrace{\Lcal \vhLcal}_{=0}, \vhLcal)_{0,\E} + \sum_{\e \in \EE}  (\underbrace{\gammatracee(\vhLcal)}_{\in \gammatracee (\ShLcal(\E))}, \vhLcal)_{0,\e} = 0.
\]
In general, this is not enough to prove the unisolvence of the DOFs.
In case of the Dirichlet-Laplace problem, this is indeed sufficient; see Proposition~\ref{proposition:unisolvency-Laplace}.
Notwithstanding, in the case of the Helmholtz problem in Section~\ref{section:Helmholtz} below, we also need assumptions on the
size of the mesh elements; see Proposition~\ref{proposition:unisolvency-Helmholtz}.

Importantly, while the definition of the local spaces is problem-dependent, as it depends on the elliptic operator and suitable traces associated with the problem under consideration,
the choice of the degrees of freedom is fixed, and always consists of
(suitably scaled) Dirichlet moments; see~\eqref{abstract-edge-dof}.
\medskip

Next, we construct global nonconforming VE Trefftz spaces for problem~\eqref{abstract:problem}.
We define the infinite dimensional, nonconforming spaces
\[
\HncpLcal  \!\!:=\!\!\left\{ v \in H^{1}(\Omega,\taun) \!\mid\!\!
    \int_\e \llbracket v \rrbracket_\e \cdot \ne \, \overline\malphae
    =0 \ \, \forall \alpha=1,\dots,\pLcal, \,  \forall\e \in \EhI \right\}\!,
\]
where~$H^{1}(\Omega,\taun)$ and~$\llbracket \cdot \rrbracket$ are defined in~\eqref{broken:Sobolev} and~\eqref{jump}, respectively.
Then, the global nonconforming Trefftz virtual element space for
problem~\eqref{abstract:problem} is defined as
\[
\VhLcal := \left\{  \vhLcal \in \HncpLcal \mid \vhLcal{}_{|\E} \in \VhLcalE \; \forall \E \in \taun   \right\}.
\]
We obtain the set of global degrees of freedom of the space~$\VhLcal$ by patching the local ones in~\eqref{abstract-edge-dof}.
In particular, we use the Dirichlet edge moments in the definition of the infinite dimensional, nonconforming space~$\HncpLcal$ in order to weakly impose the interelement continuity.
\medskip

We can summarize the Trefftz feature of the space $\VhLcal$ and the ``duality'' between Dirichlet moments and the local $\gammatracee$-type trace as follows.

\begin{center}
  \framebox[\textwidth]{Trefftz spaces $\quad$ {\bf contain} $\quad$ functions in~$\ker(\Lcal)$}\par
\framebox[\textwidth]{nonconformity $\quad$ {\bf imposed through} $\quad$ Dirichlet moments}\par
\framebox[\textwidth]{unis. of DOFs  in~\eqref{abstract-edge-dof} $\quad$ {\bf implied by} $\quad$ traces of the type~$\gammatracee$ in~\eqref{local-abstract-VEM-space}\qquad\quad}
\end{center}
As already mentioned, we incorporate the boundary conditions within the method either by enforcing them in the trial space or by suitably tuning the functional~$G(\cdot)$ on the right-hand side.

%%%
\paragraph*{Interpolation properties.}
%%%
A desirable property of the local space~$\VhDeltaE$ is that the following result is valid.

\begin{prop} \label{proposition:best-interpolation-abstract}
Given a function~$u \in H^1(\Omega)$ in the kernel of~$\Lcal$, there exists~$\uILcal \in \VhLcal$ such that
\[
\vert u - \uILcal \vert_{1,\taun} \le c \vert u - \shLcal \vert_{1,\taun} \quad \quad \forall \shLcal \in \ShzLcal(\Omega,\taun),
\]
where the broken Sobolev seminorm is defined in~\eqref{broken-seminorm} and $c$ is a positive constant independent of the mesh size.
\end{prop}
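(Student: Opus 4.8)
The plan is to mimic the proof of Proposition~\ref{proposition:best-interpolation-harmonic}, but now carrying the problem-dependent sesquilinear forms and traces of the abstract setting. First I would define the interpolant $\uILcal \in \VhLcal$ through the degrees of freedom~\eqref{abstract-edge-dof}, i.e. by requiring
\[
c(\he) \int_\e (u - \uILcal)\,\overline\malphae = 0 \qquad \forall \alpha = 1,\dots,\pLcal,\ \forall \e \in \Eh,
\]
which is well-posed thanks to the unisolvence granted by Proposition~\ref{proposition:unisolvency-abstract}. Note that this automatically makes $\uILcal$ satisfy the jump conditions defining $\HncpLcal$, so $\uILcal$ is indeed in the global space. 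The key observation, exactly as in the harmonic case, is that on each edge $\e\in\EE$ both $\gammatracee(u_{|\e})$ and $\gammatracee(\uILcal{}_{|\e})$ lie in the finite-dimensional trace space $\gammatracee(\ShLcal(\E)) = \lin\{\malphae\}_{\alpha=1}^{\pLcal}$ — for $\uILcal$ by the very definition~\eqref{local-abstract-VEM-space} of the local space, and for $u$ because $u_{|\E} \in \ShLcal(\E)$ (here one must use that $\shLcal$ ranges over a space containing the relevant restriction, or pass through the best-approximant $\shLcal$ below). Consequently
\[
\sum_{\e\in\EE} \big(\gammatracee(\uILcal),\, u - \uILcal\big)_{0,\e} = \sum_{\e\in\EE}\big(\gammatracee(\shLcal),\, u-\uILcal\big)_{0,\e}
\]
for any $\shLcal \in \ShzLcal(\Omega,\taun)$, because the difference of the two traces is a combination of the $\malphae$'s against which $u - \uILcal$ integrates to zero.

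Next I would exploit the local identity~\eqref{assumption:abstract:local}. Applying it to the pair $(u-\uILcal, u-\uILcal)$ on a fixed $\E$, using $\Lcal(u-\uILcal) = 0$ in $\E$, gives
\[
\aE(u-\uILcal,\, u-\uILcal) + \bE(u-\uILcal,\, u-\uILcal) = \sum_{\e\in\EE}\big(\gammatracee(u-\uILcal),\, u-\uILcal\big)_{0,\e}.
\]
Then I swap the edge terms using the orthogonality just derived — replacing $\gammatracee(u-\uILcal)$ by $\gammatracee(u-\shLcal)$ on each edge — and run the identity~\eqref{assumption:abstract:local} backwards for the pair $(u-\shLcal,\, u-\uILcal)$, again using $\Lcal(u-\shLcal)=0$. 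This yields
\[
\aE(u-\uILcal,\, u-\uILcal) + \bE(u-\uILcal,\, u-\uILcal) = \aE(u-\shLcal,\, u-\uILcal) + \bE(u-\shLcal,\, u-\uILcal).
\]
In the Laplace case $\aE$ is exactly the local Dirichlet seminorm squared and $\bE = 0$, so Cauchy--Schwarz closes the argument immediately with constant $1$. In the abstract case I would need coercivity/continuity-type control: assuming $|\aE(v,v) + \bE(v,v)| \gtrsim |v|_{1,\E}^2$ (a Gårding- or coercivity-type hypothesis on the relevant subspace of Trefftz functions) and $|\aE(w,v) + \bE(w,v)| \lesssim |w|_{1,\E}\,|v|_{1,\E}$ (continuity), a Cauchy--Schwarz step gives $|u-\uILcal|_{1,\E} \le c\,|u - \shLcal|_{1,\E}$ with $c$ depending only on these two structural constants and hence, via the shape-regularity~\eqref{eq:shape-regularity}, only on $\gamma$ and $\p$. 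Summing over $\E\in\taun$ and taking the infimum over $\shLcal \in \ShzLcal(\Omega,\taun)$ delivers the claimed bound in the broken seminorm~\eqref{broken-seminorm}.

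The main obstacle I anticipate is precisely the local coercivity issue: unlike the Laplace problem, where $\aE(v,v) = |v|_{1,\E}^2$ outright, for a general second-order operator $\Lcal$ the form $\aE(\cdot,\cdot)+\bE(\cdot,\cdot)$ need not be sign-definite — think of the Helmholtz case, where it is only coercive up to a lower-order term. So the clean ``constant $1$'' argument of Proposition~\ref{proposition:best-interpolation-harmonic} degrades to a statement with a constant $c$, and one has to be careful that $c$ stays mesh-independent; this is exactly why the statement of Proposition~\ref{proposition:best-interpolation-abstract} carries a generic constant $c$ rather than $1$. A secondary, more bookkeeping-type obstacle is making rigorous the claim that $u$ (assumed only in $H^1(\Omega) \cap \ker\Lcal$) has edge traces in $\gammatracee(\ShLcal(\E))$ — in general it does not, and one genuinely must route the trace-swap through the best approximant $\shLcal$, i.e. observe that $\gammatracee(\uILcal - \shLcal) \in \lin\{\malphae\}$ and that $u - \uILcal$ is orthogonal to these, never needing $\gammatracee(u)$ to be polynomial itself. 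I would also need to confirm that the scaling constant $c(\he)$ in the degrees of freedom plays no role here, since the interpolation conditions are homogeneous. Modulo these structural hypotheses on $\aE + \bE$, the proof is a direct transcription of the Laplace argument.
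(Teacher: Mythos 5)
You should first be aware that the paper does not actually prove Proposition~\ref{proposition:best-interpolation-abstract}: it is stated as a \emph{desideratum} of the abstract framework (``A desirable property \dots is that the following result is valid'', ``we wish that \dots'', ``we expect that $\uI$ \dots can be defined as the interpolant''), to be verified separately for each concrete instantiation --- with constant $1$ for the Laplace problem (Proposition~\ref{proposition:best-interpolation-harmonic}) and with constant $c_{BA}(\h\k)$, under a threshold condition on $\h\k$, for the Helmholtz problem (Proposition~\ref{proposition:best-interpolation-Helmholtz}, whose proof is deferred to the literature as ``rather technical''). So there is no paper proof to match; what you have written is an attempt to supply one in the abstract setting.

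Within that attempt, the first half is sound and is exactly the mechanism the paper uses in the Laplace case: defining $\uILcal$ by the moments~\eqref{abstract-edge-dof}, the swap $\sum_{\e\in\EE}(\gammatracee(u-\uILcal),u-\uILcal)_{0,\e}=\sum_{\e\in\EE}(\gammatracee(u-\shLcal),u-\uILcal)_{0,\e}$ is legitimate because $\gammatracee(\uILcal-\shLcal)\in\lin\{\malphae\}$ and the interpolation conditions annihilate $u-\uILcal$ against $\overline{\malphae}$ (you are right that one must route this through $\shLcal$ and never through $\gammatracee(u)$, and that $c(\he)$ is irrelevant). Combined with~\eqref{assumption:abstract:local} and $\Lcal(u-\uILcal)=\Lcal(u-\shLcal)=0$ this gives the Galerkin-type identity $\aE(u-\uILcal,u-\uILcal)+\bE(u-\uILcal,u-\uILcal)=\aE(u-\shLcal,u-\uILcal)+\bE(u-\shLcal,u-\uILcal)$. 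The gap is the concluding step: the coercivity $\vert\aE(v,v)+\bE(v,v)\vert\gtrsim\vert v\vert_{1,\E}^2$ that you invoke is \emph{not} an assumption of the abstract framework, and it genuinely fails in the framework's main motivating example --- for Helmholtz, $\Real(\aE(v,v)+\bE(v,v))=\vert v\vert_{1,\E}^2-\k^2\Vert v\Vert_{0,\E}^2$ is indefinite, which is precisely why Proposition~\ref{proposition:best-interpolation-Helmholtz} needs the eigenvalue condition~\eqref{eq:additional_assumption}, a smallness condition on $\h\k$, and a constant $c_{BA}(\h\k)$ that is not mesh-independent in the naive sense, and why its proof does not reduce to two structural constants plus Cauchy--Schwarz. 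You flag this obstacle honestly, but the result is that you have proved a \emph{conditional} version of the proposition under an added coercivity/continuity hypothesis, not the statement as it stands for a general second-order $\Lcal$. To be fair to the framework, that is arguably all one can do at this level of generality --- which is presumably why the authors left the abstract proposition unproved.
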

In particular, we wish that functions in the kernel of $\Lcal$ can be
approximated in~$\VhLcal$
not worse than
in the explicit space~$\ShzLcal(\Omega,\taun)$.
For the Laplace problem the constant is~$1$; see Proposition~\ref{proposition:best-interpolation-harmonic}.
Moreover, we expect that~$\uI$ in
Proposition~\ref{proposition:best-interpolation-abstract} can be
defined as the interpolant of~$u$ through the degrees of freedom in~\eqref{abstract-edge-dof}.

In what follows, we assume that the local sesquilinear forms~$\bE(\cdot, \cdot)$ appearing in~\eqref{assumption:abstract:local} satisfy
\begin{equation} \label{computability:bE}
\begin{split}
& \text{whenever}\ \shLcal\in \ShLcal(K),\, \vhLcal\in \VhLcal(K)\\
& \text{ then } \quad \bE(\shLcal, \vhLcal) \text{ and } \bE(\vhLcal, \shLcal) \quad\text{are computable.}
\end{split}
\end{equation}

%%%
\paragraph*{Projections and stabilizations.}
%%%
Recall the splitting
\[
\a(u,v) = \sum_{\E \in \taun} \aE(u_{|\E}, v_{|\E}).
\]
We consider the following discretizations of~$\a$ and~$\aE$:
\[
\begin{split}
& \anLcal (\uhLcal, \vhLcal)  := \sum_{\E \in \taun} \anELcal(\uhLcal{}_{|\E}, \vhLcal{}_{|\E}) \\
& := \!\!\sum_{\E \in \taun} \!\!\aE (\PiLcalp \uhLcal{}_{|\E}, \PiLcalp \vhLcal{}_{|\E}) \!+\! \SELcal ( (I\!-\! \PiLcalp) \uhLcal{}_{|\E}, (I\!-\!\PiLcalp) \vhLcal{}_{|\E}  ),
\end{split}
\]
where we have to define the projector~$\PiLcalp$ and the sequilinear form~$\SELcal(\cdot, \cdot)$.

The operator~$\PiLcalp : \VhLcal(K)\to \ShLcal(K)$ is the projection operator with respect to the local sesquilinear form~$\aE(\cdot, \cdot)$. More precisely, for all~$\E\in \taun$, we set
\[
\begin{cases}
\aE(\PiLcalp \vhLcal -  \vhLcal ,   \shLcal)  = 0 \quad \forall \vhLcal \in \VhLcalE,\, \forall \shLcal \in \ShLcal(\E)\\
\text{+ computable conditions for the uniqueness of $\PiLcalp \vhLcal$}.
\end{cases}
\]
In the nonconforming Trefftz VEM for the Laplace equation, the computable condition for uniqueness was on the average on~$\partial\E$; see~\eqref{harmonic-Pinabla}.

The computability of $\PiLcalp$ follows from the definitions of the local spaces~$\VhLcalE$ and the degrees of freedom in~\eqref{abstract-edge-dof}, and from the property~\eqref{computability:bE}:
\[
\begin{split}
\aE(\shLcal, \PiLcalp \vhLcal) 
& = \aE(\shLcal, \vhLcal) \\
& = -( \underbrace{\Lcal \shLcal}_{=0}, \vhLcal)_{0,\E} + \sum_{\e \in \EE}  (\underbrace{\gammatracee(\shLcal)}_{\in \gammatracee (\ShLcal(\E))}, \vhLcal)_{0,\e} -\underbrace{\bE(\shLcal, \vhLcal)}_{\eqref{computability:bE}}.
\end{split}
\]}}

The choice of the degrees of freedom in~\eqref{abstract-edge-dof} allows us to compute the $L^2$-edge projector $\PieLcal: \VhLcalE{}_{|\e} \rightarrow \gammatracee (\ShLcal(\E){}_{|\e})$, which is defined as
\[
(\PieLcal \vhLcal{}_{|\e} -  \vhLcal{}_{|\e}, \gammatracee(\shLcal{}_{|\e}))_{0,\e}   \quad \quad \forall \e \in \EE, \; \forall \vhLcal\in\VhLcal,\; \forall \shLcal \in \ShLcal(\Omega, \taun).
\]
We need this projector for the discretization of the boundary terms, i.e., the sesquilinear form~$\b(\cdot, \cdot)$ and the right-hand side~$\G(\cdot)$ appearing in~\eqref{abstract:problem}.
Such terms do not appear in the Dirichlet-Laplace setting of Section~\ref{section:Laplace}.
They would appear in the case of the Laplace problem with inhomogenous Neumann boundary conditions.

For future convenience, we introduce the approximations
\[
\bnLcal(\uhLcal,\vhLcal) \approx \b(\uhLcal,\vhLcal) \quad \quad \GnLcal(\vhLcal) \approx \G(\vhLcal).
\]
As for the stabilization~$\SELcal$, we require it to satisfy two properties: it has to be computable via the DOFs and it must lead to a well-posed problem.
For instance, in Section~\ref{section:Laplace}, we considered stabilizations leading to coercive and continuous discrete sesquilinear forms.
This is not necessary in all situations. We will see in Section~\ref{section:Helmholtz} below that, in the Helmholtz case, the
coercivity is not required.

%%%
\paragraph*{The method.}
%%%
With $\subVhLcal\subseteq \VhLcal$ and $\subWhLcal\subseteq \WhLcal$, which may or not contain information about the boundary conditions depending on the choice of~$V$, $W$, $\b$, and~$\G$ in~\eqref{abstract:problem}, 
the nonconforming Trefftz VEM for problem~\eqref{abstract:problem} reads
\begin{equation} \label{ncVEM:abstract}
\begin{cases}
\text{find } \uhLcal \in \subVhLcal \text{ such that}\\
\anLcal(\uhLcal, \vhLcal) + \bnLcal(\uhLcal, \vhLcal) = \GnLcal(\vhLcal) \quad \quad \forall \vhLcal \in \subWhLcal.
\end{cases}
\end{equation}
The well-posedness of the method relies on suitable properties of the stabilization~$\SELcal (\cdot, \cdot)$.

%%%
\paragraph*{Convergence analysis.}
%%%
Here, we state the Strang-type result that would be the target of the error analysis for method~\eqref{ncVEM:abstract}.
\begin{thm} \label{theorem:abstract-abstract}
Let~$u$ and~$\uhLcal$ be the solutions to~\eqref{abstract:problem} and~\eqref{ncVEM:abstract}, respectively.
Under the shape regularity assumption~\eqref{eq:shape-regularity}, the following bound is valid:
\[
\vert u - \uhLcal \vert_{1,\taun} \le c(\SELcal) \left\{ A + B + C   \right\},
\]
where
\begin{itemize}\setlength\itemsep{0.1em}
\item $c(\SELcal)$ is a constant possibly depending on the stabilization~$\SELcal$;
\item $A$ is the best approximation of~$u$ in the explicit space~$\ShzLcal(\Omega,\taun)$, i.e.,
\[
\inf_{\shLcal \in \ShzLcal(\Omega,\taun)} \Vert u -\shLcal  \Vert_{\text{NORM}},
\]
where~$\Vert \cdot \Vert_{\text{NORM}}$ is a suitable norm;
\item $B$ is a term addressing the nonconformity of the global space; 
\item $C$ is a term involving the approximation of the boundary terms.
\end{itemize}
\end{thm}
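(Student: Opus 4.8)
The plan is to follow the classical Strang-type argument for nonconforming Galerkin methods, adapted to the virtual element setting exactly as in the Laplace case (Theorem~\ref{th:abstractLaplace}), but keeping track of the three separate sources of error. First I would fix an arbitrary $\shLcal \in \ShzLcal(\Omega,\taun)$ lying in $\ker(\Lcal)$, and its virtual element interpolant $\uILcal \in \VhLcal$ furnished by Proposition~\ref{proposition:best-interpolation-abstract}, so that $\vert u - \uILcal\vert_{1,\taun} \le c\,\vert u - \shLcal\vert_{1,\taun}$. By the triangle inequality it then suffices to bound $\vert \uILcal - \uhLcal\vert_{1,\taun}$. The coercivity-type property of the stabilized form $\anLcal(\cdot,\cdot) + \bnLcal(\cdot,\cdot)$ on $\subVhLcal \times \subWhLcal$ (which is precisely what the ``suitable properties of the stabilization $\SELcal$'' are designed to guarantee, and which in the Laplace case reduces to genuine coercivity with constant $\min(1,\alpha_*)$) lets me write
\[
\betamin\,\vert \uILcal - \uhLcal\vert_{1,\taun}^2 \le \anLcal(\uILcal - \uhLcal, \vhLcal) + \bnLcal(\uILcal - \uhLcal, \vhLcal)
\]
for a suitable $\vhLcal = \uILcal - \uhLcal$ (or, in the inf-sup case, for the optimal test function), where $\betamin = \betamin(\SELcal)$ absorbs into $c(\SELcal)$.

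Next I would split the right-hand side by inserting and subtracting the exact solution and exploiting the two Galerkin orthogonalities: $u$ satisfies $\a(u,v)+\b(u,v)=\G(v)$ for $v\in\W$, and $\uhLcal$ satisfies $\anLcal(\uhLcal,\vhLcal)+\bnLcal(\uhLcal,\vhLcal)=\GnLcal(\vhLcal)$ for $\vhLcal\in\subWhLcal$. Since $\vhLcal$ is nonconforming ($\vhLcal \notin \W$ in general), applying the weak formulation~\eqref{abstract:problem} to it produces the consistency defect
\[
\a(u,\vhLcal) + \b(u,\vhLcal) - \G(\vhLcal) = \sum_{\E\in\taun}\Big[\aE(u,\vhLcal) - \sum_{\e\in\EE}(\gammatracee(u),\vhLcal)_{0,\e} + \bE(u,\vhLcal)\Big] + \text{(boundary mismatch)},
\]
which by the local identity~\eqref{assumption:abstract:local} collapses to edge terms of the form $\sum_{\e}(\gammatracee(u),\llbracket\vhLcal\rrbracket)_{0,\e}$ — the abstract analogue of the nonconformity form $\Ncalh$ of~\eqref{nonconforming-bf} — plus a term comparing $\G,\b$ with $\GnLcal,\bnLcal$. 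Collecting terms I obtain three groups: (i) $\anLcal(\uILcal,\vhLcal)-\a(u,\vhLcal)$ together with the projection/stabilization inconsistency $\anELcal$ vs.\ $\aE$, which is controlled by $\vert u - \shLcal\vert_{1,\taun}$ through the polynomial consistency of $\PiLcalp$ (it is exact on $\ShLcal(\E)$), the interpolation bound, and the stabilization bounds on $\ker\PiLcalp$ — this yields term $A$; (ii) the edge-jump term $\sum_\e(\gammatracee(u),\llbracket\vhLcal\rrbracket)_{0,\e}$, which after dividing by $\vert\vhLcal\vert_{1,\taun}$ gives term $B$, the measure of nonconformity of $\VhLcal$; (iii) the difference $\bnLcal(\cdot,\cdot)-\b(\cdot,\cdot)$ and $\GnLcal-\G$, handled via the edge projector $\PieLcal$ and property~\eqref{computability:bE}, giving term $C$. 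Dividing through by $\vert\uILcal-\uhLcal\vert_{1,\taun}$ and taking the infimum over $\shLcal\in\ShzLcal(\Omega,\taun)$ closes the estimate, with $c(\SELcal)$ collecting the stabilization constants, the interpolation constant of Proposition~\ref{proposition:best-interpolation-abstract}, and the shape-regularity constant $\gamma$.

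The main obstacle is that, unlike the Laplace case, one cannot simply invoke coercivity plus continuity of $\anLcal+\bnLcal$; for indefinite problems such as Helmholtz the well-posedness of~\eqref{ncVEM:abstract} rests on a discrete inf-sup / Gårding-type argument, so the step ``$\betamin\vert\uILcal-\uhLcal\vert^2 \le \dots$'' must be replaced by a more delicate duality or Schatz-type argument, and it is genuinely at this point that the weaker assumptions on $\SELcal$ (boundedness but not coercivity) must be shown to suffice — this is exactly the new content promised in the introduction, and the precise form of the hypotheses on $\SELcal$ needed to make $c(\SELcal)$ finite is what has to be pinned down. A secondary technical point is that the norm $\Vert\cdot\Vert_{\text{NORM}}$ in term $A$ cannot be the bare $\vert\cdot\vert_{1,\taun}$ seminorm: for the boundary and lower-order terms one needs a mesh-dependent norm incorporating weighted edge contributions (so that trace inequalities on $\partial\E$ with the correct $\he$-powers go through under~\eqref{eq:shape-regularity}), and choosing it so that both the best-approximation term and the consistency terms are simultaneously controllable is the part that requires care rather than routine calculation.
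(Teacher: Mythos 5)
Your outline is sound, but note that the paper itself gives no proof of Theorem~\ref{theorem:abstract-abstract}: it is deliberately stated as a schematic target (the constant $c(\SELcal)$, the norm $\Vert\cdot\Vert_{\text{NORM}}$, and the terms $B$ and $C$ are all left unspecified), and the actual proofs live in its concrete instantiations --- Theorem~\ref{th:abstractLaplace} for the Laplace problem (second Strang lemma, proved in~\cite{ncHVEM}) and Theorem~\ref{theorem:abstract-Helmholtz} for the Helmholtz problem (Schatz-type argument based on the G\r arding-type inequality~\eqref{weak:Garding}). Your sketch faithfully reproduces the strategy used in both of those instances, and correctly isolates the two points that genuinely vary from problem to problem --- replacing coercivity by a G\r arding/Schatz argument for indefinite operators, and the choice of the mesh-dependent norm --- which is precisely why the abstract statement leaves them unspecified.
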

In the light of Theorem~\ref{theorem:abstract-abstract}, we could deduce an optimal convergence result from best approximation estimates in~$\ShzLcal(\Omega,\taun)$, and bounds on the nonconformity at interior and boundary edges.

In the setting of the nonconforming Trefftz VEM for the Laplace problem, we had~$C=0$ and
\[
c(\SELcal) = \frac{\alpha^*}{\alpha_*},\quad A= \inf_{\qpDelta \in \mathcal S^{0,\Delta}_\p(\Omega,\taun)} \vert u -\qpDelta \vert_{1,\taun},
\quad B = \sup_{ 0\not= \vhDelta \in \VhDeltaz} \frac{\Ncalh
    (u,\vhDelta)}{\vert \vhDelta \vert_{1,\taun}}
\]
where~$\Ncalh (u,v) $ is defined in~\eqref{nonconforming-bf}.

An important tool in the proof of Theorem~\ref{theorem:abstract-abstract} is Proposition~\ref{proposition:best-interpolation-abstract}, which allows us to absorb a best approximation term in~$ \mathcal S^{0,\Delta}_\p(\Omega,\taun)$ in the term~$A$.

%%%
\paragraph*{Common and problem-related features.}
%%%
We conclude this section with a summary of the common features in nonconforming Trefftz VEM and the differences depending on the problem under consideration.
\medskip

\textbf{Common features:}
\begin{itemize}\setlength\itemsep{0.1em}
\item in the definition of the local spaces, the existence of an underlying finite dimensional Trefftz space and the characterization through a given edge trace;
\item the definition of Dirichlet-type degrees of freedom;
\item the fact that we can control best interpolation errors in VEM spaces by best approximation errors in explicit discontinuous spaces.
\end{itemize}

\textbf{Problem-related features:}
\begin{itemize}\setlength\itemsep{0.1em}
\item in the definition of the local spaces, the kind of trace used in the characterization;
\item how to prove the unisolvence of the DOFs (additional assumptions might be needed);
\item required properties on the stabilization form;
\item the definition and the well-posedness of the projections;
\item the imposition of the boundary conditions.
\end{itemize}

%%%%%%%%%%%%%%%%%%%%%%%%%%%%%%%%%%%%%%%%%%%%%%%%%%%%%%%%%%%%%%%%%%%%%%%%%%%
\section{The nonconforming Trefftz virtual element method for the Helmholtz problem} \label{section:Helmholtz}
%%%%%%%%%%%%%%%%%%%%%%%%%%%%%%%%%%%%%%%%%%%%%%%%%%%%%%%%%%%%%%%%%%%%%%%%%%%

In this section, according to the framework established in Section~\ref{section:general-framework}, we describe the construction and the main steps of the analysis of a nonconforming Trefftz VEM for the Helmholtz equation.
We follow the framework of~\cite{TVEM_Helmholtz, TVEM_Helmholtz_num}. However, we propose a slightly different analysis, based on milder assumptions on the stabilization form.

%%%
\paragraph*{The continuous problem.}
%%%
Let~$\Omega \subset \Rbb^2$ be a polygonal domain, $\g \in H^{-\frac{1}{2}}(\partial \Omega)$, and~$\k >0$.
Introduce the following space of complex-valued functions and the following sesquilinear forms:
\[
V:= H^1(\Omega), \quad\quad \a(\cdot,\cdot) := (\nabla \cdot, \nabla\cdot)_{0,\Omega} - \k^2(\cdot ,\cdot)_{0,\Omega}, \quad \quad \b(\cdot,\cdot) := \i\k (\cdot ,\cdot)_{0,\partial \Omega}.
\]

We consider the following Helmholtz problem endowed with impedance boundary conditions: find a sufficiently smooth~$u:\Omega \rightarrow \mathbb C$ such that
\[
\begin{cases}
\Delta u + \k^2 u = 0 					& \text{in } \Omega\\
\i\k u + \nOmega \cdot \nabla u = \g 		& \text{on } \partial \Omega,\\
\end{cases}
\]
which in weak formulation reads
\begin{equation} \label{Helmholtz:weak}
\begin{cases}
\text{find } u\in V \text{ such that }\\
\a(u,v) + \b(u,v) = (\g,v)_{0,\partial \Omega} \quad \quad \forall v \in V. \\
\end{cases}
\end{equation}
Observe that~\eqref{Helmholtz:weak} falls in the broader abstract setting~\eqref{abstract:problem}.

%%%
\paragraph*{An explicit discontinuous space.}
%%%
Let~$\p \in \mathbb N$. Given~$\{\taun\}_\h$ a sequence of polygonal decompositions over~$\Omega$ as in Section~\ref{section:introduction},
we introduce the corresponding sequence of piecewise plane waves over~$\taun$:
\[
\PWptaun := \left\{ \wp\in L^2(\Omega) \mid \wp{}_{|\E} \in \PWpE \; \forall \E \in \taun      \right\},
\]
where, for all~$\E \in \taun$, the local space of plane waves~$\PWpE$ is constructed as follows.

Introduce the set of indices $\Jcal:= \{ 1,\dots, 2\p+1 \}$ and the set of pairwise different and normalized directions~$\{ \dell  \}_{\ell \in \Jcal}$.
In each~$\E \in \taun$, consider the set of plane waves
\begin{equation} \label{plane-wave-and-directions}
\well(\xbf) := e^{\i\k\dell \cdot (\xbf-\xbfE)} \quad \quad \quad \forall \ell \in \Jcal,\, \forall \xbf \in \E,
\end{equation}
and define
\[
\PWpE := \text{span} \{ \well, \ell \in \Jcal  \}.
\]
These plane waves belong to the kernel of the Helmholtz operator, i.e.,
\[
\Delta \wp + \k^2 \wp = 0 \quad \quad \quad \forall \wp \in \PWpE, \, \forall \E \in \taun.
\]
Introduce the weighted broken norms and seminorms
\[
\begin{split}
& \vert \cdot \vert_{s,\taun} ^2:= \sum_{\E \in \taun} \vert \cdot \vert_{s,\E}^2,\quad \  \Vert \cdot \Vert_{s,\k,\taun} ^2:= \sum_{\E   \in \taun} \Vert \cdot \Vert_{s,\k,\E}^2 \\
& \text{with}\ \ \Vert \cdot \Vert_{s,\k,\E} ^2:= \sum_{j=0}^s \k^{2(s-j)}  \vert \cdot \vert_{j,\E}^2\ \ \forall \E \in \taun.
\end{split}
\]
For future convenience, we demand that the directions~$\dell$ are uniformly separated. More precisely, we ask that
\begin{equation}\label{eq:ass_directions}
  \begin{split}
& \text{there exists~$\delta \in (0,1]$ such that the angle between $\d_{\ell_1}$ and $\d_{\ell_2}$}\\
&\text{is larger than or equal to $\delta (2 \pi/\p)$ for every~$\ell_1,\ell_2 \in \Jcal$, $\ell_1\not=\ell_2$}.
\end{split}
\end{equation}
This assumption allows us to recall the following approximation property of discontinuous, piecewise plane waves for functions in the kernel of the Helmholtz operator; see, e.g., \cite[Theorem~5.2]{moiola2011ZAMP}.
\begin{prop} \label{proposition:best-PW}
Let~$u \in H^{s+1}(\Omega)$, $s>0$, belong to the kernel of the Helmholtz operator.
Under the shape-regularity assumption~\eqref{eq:shape-regularity} with constant~$\gamma$ and assumption~\eqref{eq:ass_directions} on the directions $\dell$,
for all~$L\in \Rbb$ with~$1\le L \le \min(\p,s)$, there exists~$\wp \in \PWptaun$ such that the following estimate is valid: for every~$0\le j\le L$,
\[
\Vert u - \wp \Vert_{j,\k,\taun} \le c_{pw}(\h \k) \h^{L+1-j} \Vert u \Vert_{L+1,\k,\Omega},
\]
where
\[
c_{pw}(t) := C e^{b\, t} (1+t^{j+q+8}), \quad \quad \b,\,C \in \Rbb.
\]
The constant $C > 0$ depends on $\p$, $j$, $L$, $\gamma$,
and the directions~$\{ \dell \}$, but is independent of~$\k$, $\h$, and~$u$.
On the other hand, the constant~$b$ depends on the geometric properties of the mesh only.
Observe that~$c_{pw}(\h \k)$ remains bounded as~$\h \to 0$.
\end{prop}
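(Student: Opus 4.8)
The statement is an approximation result for plane waves applied to functions in the kernel of the Helmholtz operator, and it is quoted from \cite{moiola2011ZAMP}. The natural strategy is therefore \emph{not} to reprove it from scratch, but to reduce it to the element-wise plane wave approximation estimates established there, and then sum over the mesh. The plan is to proceed in three steps.

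\emph{Step 1: Element-wise approximation.} Fix $\E \in \taun$. Since $u$ lies in the kernel of the Helmholtz operator, $u_{|\E}$ is a (complex) solution of the Helmholtz equation on $\E$, and by assumption $u \in H^{L+1}(\E)$ with $1 \le L \le \min(\p,s)$. By \cite[Theorem~5.2]{moiola2011ZAMP}, applied on the star-shaped element $\E$ of diameter $\hE$ with the $2\p+1$ directions $\{\dell\}$ satisfying the angular separation~\eqref{eq:ass_directions}, there exists a local plane wave $\wp{}_{|\E} \in \PWpE$ such that, for every $0 \le j \le L$,
\[
\Vert u - \wp \Vert_{j,\k,\E} \le C\, e^{b\,\hE\k}\,(1 + (\hE\k)^{j+q+8})\, \hE^{L+1-j} \Vert u \Vert_{L+1,\k,\E},
\]
where $C$ depends on $\p$, $j$, $L$, the shape-regularity constant $\gamma$ (entering through the star-shapedness and the ratio bounds in~\eqref{eq:shape-regularity}), and the directions, but not on $\k$, $\hE$, or $u$, while $b$ depends only on the geometry. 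Here one uses~\eqref{eq:shape-regularity}$ii)$ to replace $\hE$ by $\h$ up to a factor depending on $\gamma$, so that $\hE\k \le \h\k$ and $\hE^{L+1-j} \le \h^{L+1-j}$ after renaming constants; since $L+1-j \ge 1 > 0$ this is harmless.

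\emph{Step 2: Patching into a global piecewise plane wave.} Define $\wp \in \PWptaun$ by setting $\wp{}_{|\E}$ to be the local approximant from Step~1 on each $\E \in \taun$. This is legitimate because $\PWptaun$ imposes no interelement matching — it consists of functions that are merely $L^2(\Omega)$ and piecewise in $\PWpE$. Then, using the definition of the weighted broken norm $\Vert \cdot \Vert_{j,\k,\taun}^2 = \sum_{\E \in \taun} \Vert \cdot \Vert_{j,\k,\E}^2$, square the element-wise bound and sum:
\[
\Vert u - \wp \Vert_{j,\k,\taun}^2 = \sum_{\E \in \taun} \Vert u - \wp \Vert_{j,\k,\E}^2 \le c_{pw}(\h\k)^2\, \h^{2(L+1-j)} \sum_{\E\in\taun} \Vert u \Vert_{L+1,\k,\E}^2 = c_{pw}(\h\k)^2\, \h^{2(L+1-j)} \Vert u \Vert_{L+1,\k,\Omega}^2,
\]
where we have pulled the $\E$-independent factor $c_{pw}(\h\k)^2 \h^{2(L+1-j)}$ out of the sum — it is uniform over $\taun$ precisely because $C$, $b$, $j$, $q$ do not depend on the individual element. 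Taking square roots gives the claimed estimate with $c_{pw}(t) = C e^{b t}(1 + t^{j+q+8})$. The final remark, that $c_{pw}(\h\k)$ stays bounded as $\h \to 0$, is immediate since $c_{pw}$ is a fixed continuous function and $\h\k \to 0$ when $\k$ is fixed, so $c_{pw}(\h\k) \to C$.

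\emph{Main obstacle.} The only real subtlety — and the reason this is not entirely trivial — is ensuring that the constant $C$ in the local estimate of Step~1 is genuinely \emph{uniform} over all elements of all meshes in the family, i.e.\ that it depends on the mesh only through $\gamma$ and through the fixed direction set. This is where the shape-regularity assumption~\eqref{eq:shape-regularity} and the angular separation assumption~\eqref{eq:ass_directions} are essential: the former controls the Jansen/scaling argument behind \cite[Theorem~5.2]{moiola2011ZAMP} (star-shapedness with a ball of radius $\gamma\hE$, and $\he \le \hE \le \gamma\he$), and the latter guarantees that the $2\p+1$ directions span a rich enough plane wave space with stably invertible interpolation, independent of $\E$. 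Once uniformity is in hand, Step~2 is a routine summation. I would also take a moment to confirm the bookkeeping of the exponent $j+q+8$ and the role of the auxiliary parameter $q$ as they appear in the cited reference, but this is purely a matter of matching notation, not a mathematical difficulty.
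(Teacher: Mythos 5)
Your proposal is correct and takes essentially the same approach as the paper, which does not prove this proposition but simply recalls it by citing \cite[Theorem~5.2]{moiola2011ZAMP}; your localize--patch--sum argument is the standard and intended way to pass from the cited local estimate to the stated global broken-norm bound. The one point you flag as a subtlety (uniformity of the local constant over the mesh family via \eqref{eq:shape-regularity} and \eqref{eq:ass_directions}) is indeed the only nontrivial ingredient, and it is exactly what the cited theorem supplies.
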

The importance of Proposition~\ref{proposition:best-PW} resides in the
fact that there exists a finite dimensional space, whose dimension is lower than that of the piecewise polynomial space of degree at most~$\p$, locally~$2\p+1$
instead of~$(\p+1)(\p+2)/2$ in 2D, but with the same approximation rates for functions in the kernel of the Helmholtz operator.

%%%
\paragraph*{Design of the VE Trefftz space.}
%%%
Here, we recall from~\cite{TVEM_Helmholtz} the definition of local and global nonconforming Trefftz spaces for the Helmholtz problem.
Given~$\E \in \taun$, for all~$\e \in \EE$, introduce the space
\[
  \PWpe := \{  \welle   \mid \welle = \wellE{}_{|\e} \text{ for some } \wellE \in \PWpE    \}.
\]
We have that~$\dim(\PWpe) \le \dim(\PWpE)$.
More precisely, the dimension of~$\PWpe$ gets smaller whenever the restrictions to~$\e$ of two basis functions of~$\PWpE$ coincide.
Below, we use the notation~$\NPWe := \dim(\PWpe)$.

Given~$\E \in \taun$ and~$\e\in \EE$, introduce the local impedance trace operators
\[
\gammatraceIE (v) := \i\k v + \nE \cdot \nabla v , \quad \gammatraceIe (v) :=  \i\k v_{|_{\e}} + \ne \cdot (\nabla v)_{|_{\e}}   \quad \quad \forall v \in H^1(\E),
\]
and define the local space
\begin{equation} \label{local-Helmholtz-VEM-space}
\begin{split}
\VhHE := \{ \vhH 
& \in H^1(\E) \mid \Delta \vhH + \k^2 \vhH = 0 \text{ in } \E, \\
&  \forall \e \in \EE\ \exists \wellE \in \PWpE\ \text{s.t.}\ 
\gammatraceIe(\vhH) =  \gammatraceIe(\wellE) \}.
\end{split}
\end{equation}
Equivalently, we are requiring that the impedance trace of functions in~$\VhHE$ belongs to~$\PWpe$ for all~$\e \in \EE$.

The idea behind the definition of~$\VhHE$ is exactly the one described in Section~\ref{section:general-framework} after formula~\eqref{local-abstract-VEM-space}, with $\gammatracee=\gammatraceIe$.
The inclusion of $\PWpE$ within $\VhHE$ yields good interpolation properties of the space; see Proposition~\ref{proposition:best-interpolation-Helmholtz} below.

For all edges~$\e \in \EE$, let $\{ \wpe \}_{\alpha=1}^{\NPWe}$ be a basis of~$\PWpe$.
Consider the following set of antilinear functionals on $\VhHE$:
\begin{equation} \label{PW-edge-dof}
  \vhH\in \VhHE\ \mapsto\ \frac{1}{\he} \int_\e \vhH \overline{\wpe} \quad \quad \forall \alpha=1,\dots,\NPWe,\; \forall \e \in \EE.
\end{equation}

So far, the construction falls in the abstract setting detailed in
Section~\ref{section:general-framework}.
The first big difference with respect to the case of the Laplace problem in Section~\ref{section:Laplace} is in the proof of the
unisolvence of the degrees of freedom, which requires the following additional assumption: for all~$\E \in \taun$,
\begin{equation}\label{eq:additional_assumption}
\text{$\k$ is such that~$\k^2$ is not a Dirichlet-Laplace eigenvalue on~$\E$.}
\end{equation}
As discussed, e.g., in~\cite[Section~3.1]{TVEM_Helmholtz}, \text{the} condition~\eqref{eq:additional_assumption} boils down to a threshold condition on the mesh size.

\begin{prop} \label{proposition:unisolvency-Helmholtz}
Under assumption~\eqref{eq:additional_assumption}, the set of functionals~\eqref{PW-edge-dof} is a set of unisolvent degrees of freedom.
\end{prop}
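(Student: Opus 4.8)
The plan is to mirror the argument for the Laplace case (Proposition~\ref{proposition:unisolvency-Laplace}), using the integration-by-parts identity in the form supplied by the abstract assumption~\eqref{assumption:abstract:local}, and then to invoke the extra spectral hypothesis~\eqref{eq:additional_assumption} precisely at the point where the Laplace argument concluded "zero gradient implies zero function." Since the number of functionals in~\eqref{PW-edge-dof} does not exceed $\dim(\VhHE)$, it suffices to prove that a function $\vhH \in \VhHE$ all of whose moments $\frac{1}{\he}\int_\e \vhH\,\overline{\wpe}$ vanish must be identically zero.

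First I would write down, for such a $\vhH$, the local identity
\[
\aE(\vhH,\vhH) + \bE(\vhH,\vhH)
= -(\underbrace{\Delta\vhH + \k^2\vhH}_{=0},\,\vhH)_{0,\E}
+ \sum_{\e\in\EE}(\underbrace{\gammatraceIe(\vhH)}_{\in\PWpe},\,\vhH)_{0,\e},
\]
which follows from~\eqref{assumption:abstract:local} with $\gammatracee = \gammatraceIe$ and $\bE(\cdot,\cdot) = \i\k(\cdot,\cdot)_{0,\partial\E}$. Because the impedance trace $\gammatraceIe(\vhH)$ lies in $\PWpe$ for each edge (by the very definition of $\VhHE$), and the edge basis $\{\wpe\}$ spans $\PWpe$, the assumption that all moments of $\vhH$ vanish forces each boundary term $(\gammatraceIe(\vhH),\vhH)_{0,\e}$ to vanish. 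Hence $\aE(\vhH,\vhH) + \bE(\vhH,\vhH) = 0$, i.e.
\[
\vert\vhH\vert_{1,\E}^2 - \k^2\Vert\vhH\Vert_{0,\E}^2 + \i\k\Vert\vhH\Vert_{0,\partial\E}^2 = 0.
\]
Taking the imaginary part gives $\Vert\vhH\Vert_{0,\partial\E} = 0$, so $\vhH$ has zero Dirichlet trace on $\partial\E$; taking the real part then gives $\vert\vhH\vert_{1,\E}^2 = \k^2\Vert\vhH\Vert_{0,\E}^2$.

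At this point $\vhH \in H^1_0(\E)$ satisfies $\Delta\vhH + \k^2\vhH = 0$ in $\E$, so either $\vhH \equiv 0$ or $\k^2$ is a Dirichlet--Laplace eigenvalue of $\E$. Assumption~\eqref{eq:additional_assumption} excludes the latter, so $\vhH \equiv 0$, which is the claimed unisolvence. The main obstacle — and the reason the Laplace proof does not transfer verbatim — is exactly this last step: for the Laplace operator a zero-trace harmonic function is automatically zero by the maximum principle (equivalently, $\vert\vhH\vert_{1,\E}^2 = 0$ directly), whereas for the Helmholtz operator one only gets $\vert\vhH\vert_{1,\E}^2 = \k^2\Vert\vhH\Vert_{0,\E}^2$, and ruling out a nontrivial kernel genuinely requires the non-resonance hypothesis on $\k^2$. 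I would also remark, following~\cite[Section~3.1]{TVEM_Helmholtz}, that~\eqref{eq:additional_assumption} is guaranteed once $\hE$ is below a threshold depending on $\k$, since the smallest Dirichlet eigenvalue of a domain of diameter $\hE$ scales like $\hE^{-2}$.
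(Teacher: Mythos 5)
Your proposal is correct and follows essentially the same route as the paper: reduce to injectivity, use the integration-by-parts identity to get $\vert\vhH\vert_{1,\E}^2-\k^2\Vert\vhH\Vert_{0,\E}^2\pm\i\k\Vert\vhH\Vert_{0,\partial\E}^2=0$ (the sign of the imaginary term differs only because you place the conjugate on the other factor), take the imaginary part to kill the boundary trace, and invoke assumption~\eqref{eq:additional_assumption} to conclude $\vhH=0$. Your closing remarks on why the Laplace argument does not transfer verbatim and on the mesh-size interpretation of the spectral condition are accurate but not part of the paper's proof.
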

\begin{proof}
The proof can be found, e.g., in \cite[Lemma~3.1]{TVEM_Helmholtz}. For the sake of completeness, we recall it here.
The number of the functionals in~\eqref{PW-edge-dof} is smaller than or equal to the dimension of~$\VhDeltaE$.
Thus, it suffices to show the unisolvence of such a set of functionals.

Let~$\vhH \in \VhHE$ be such that the functionals~\eqref{PW-edge-dof} are zero in $\vhH$. An integration by parts and the properties of functions in~$\VhHE$ yield
\[ 
\begin{split}
\vert \vhH \vert_{1,\E}^2 
& - \k^2 \Vert \vhH \Vert_{0,\E}^2 - \i\k \Vert \vhH \Vert_{0, \partial \E}^2 \\
& = \underbrace{\int_\E \vhH \overline{(-\Delta \vhH - \k^2 \vhH)}}_{=0} +  \sum_{\e \in \EE} \int_\e \vhH \overline{\underbrace{\gammatraceIE (\vhH)}_{\in \PWpe}}  = 0.
\end{split}
\]
By taking the imaginary part on both sides, we deduce that~$\vhH$ has
zero trace on~$\partial \E$. Since $\vhH$ also satisfies $\Delta  \vhH+\k^2 \vhH=0$, assumption~\eqref{eq:additional_assumption} implies that $\vhH=0$, whence the unisolvence follows.
\end{proof}

In the present  case, the local forms $\bE$ in~\eqref{assumption:abstract:local} of the abstract setting of Section~\ref{section:general-framework} are given by
\begin{equation*}
\bE(\uhH, \vhH) := \i \k \int_{\partial \E} \uhH \overline{\vhH}.
\end{equation*}
The forms~$\bE(\cdot,\cdot)$ fulfil assumption~\eqref{computability:bE} for all~$\E \in \taun$.

Next, we construct global nonconforming Trefftz VE spaces for problem~\eqref{Helmholtz:weak}.
Recalling the definition of the broken Sobolev
spaces~$H^{1}(\Omega,\taun)$ and of the jump operator~$\llbracket \cdot \rrbracket$
in~\eqref{broken:Sobolev} and~\eqref{jump}, respectively, and
that~$\{ \wpe \}_{\alpha=1}^{\NPWe}$ denotes a basis of~$\PWpe$,
we set
{\[
\HncpH  \!\!:=\!\! \left\{ \!v \!\in\! H^{1}(\Omega,\taun) \!\mid\!\!
    \int_\e \llbracket v \rrbracket_\e \!\cdot\! \ne \, \overline{\wpe} =0
    \ \, \forall \alpha\!=\!1,\dots, \NPWe,  \forall \e \in \EhI \right\}\!.
\]
Then, we define the global nonconforming Trefftz virtual element space for the problem~\eqref{Helmholtz:weak} as
\[
\VhH := \left\{  \vhH \in \HncpH \mid \vhH{}_{|\E} \in \VhHE \; \forall \E \in \taun   \right\}.
\]
We obtain the set of global degrees of freedom of the space~$\VhH$ by patching the local ones in~\eqref{PW-edge-dof}.
In particular, we use the Dirichlet edge moments in the definition of the infinite dimensional, nonconforming space~$\HncpH$ in order to weakly impose the interelement continuity.
\medskip

As in Section~\ref{section:general-framework}, we summarize the features of the space $\VhHE$, including
the ``duality'' between Dirichlet moments and the local impedance traces~$\gammatraceIE$ as follows.
\begin{center}
  \framebox[\textwidth]{\qquad\qquad\qquad\qquad
    Trefftz spaces $\quad$ {\bf contain} $\quad$
    functions in $\ker(\Delta + \k^2)$
  }\par
\framebox[\textwidth]{nonconformity $\quad$ {\bf imposed through} $\quad$ Dirichlet moments}\par
\framebox[\textwidth]{\; unis. of DOFs in~\eqref{abstract-edge-dof} $\quad$ {\bf implied by} $\quad$
  traces of the type $\gammatraceIE$
  in~\eqref{local-Helmholtz-VEM-space}\qquad\quad}
\end{center}
Differently from the Laplace case, the boundary conditions are incorporated within the weak formulation of the problem, and not in the trial and test spaces.

%%%
\paragraph*{Interpolation properties.}
%%%
Similarly to the Laplace case, we can approximate any target function~$u \in H^1(\Omega)$ by functions in the space~$\VhH$ better than by functions in the space of discontinuous, piecewise plane waves.
This was shown in~\cite[Theorem~4.2]{TVEM_Helmholtz}.
\begin{prop} \label{proposition:best-interpolation-Helmholtz}
Let assumptions~\eqref{eq:shape-regularity}, \eqref{eq:ass_directions}, and~\eqref{eq:additional_assumption} be valid.
Moreover, let~$\h \k$ be sufficiently small; see~\cite[equation~(4.17)]{TVEM_Helmholtz}. Given a function~$u \in H^1(\Omega)$, there exists~$\uIH \in \VhH$ such that
\[
\Vert u - \uIH \Vert_{1,\k,\taun} \le c_{BA}(\h\k)   \Vert u - \wp \Vert_{1,\k,\taun} \quad \quad \forall \wp \in \PWptaun,
\]
where
\[
c_{BA}(t):= 2 c_1 (1+c_2 t^2)  (2 + c_3 t^2),
\]
for three positive constants~$c_1$, $c_2$, and~$c_3$.
\end{prop}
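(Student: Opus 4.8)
The plan is to take $\uIH$ to be the \emph{interpolant} of $u$ defined through the degrees of freedom in~\eqref{PW-edge-dof}, i.e.\ the unique element of $\VhH$ satisfying
$$\frac{1}{\he}\int_\e (u-\uIH)\,\overline{\wpe}=0 \qquad \forall\,\alpha=1,\dots,\NPWe,\ \forall\,\e\in\Eh.$$
This object is well defined: by Proposition~\ref{proposition:unisolvency-Helmholtz} (which uses~\eqref{eq:additional_assumption}) the local interpolant on each $\E$ exists and is unique, and since across any interior edge the two one-sided interpolants share the same edge moments, $\uIH$ automatically lies in the nonconforming space $\HncpH$, hence in $\VhH$. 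I would then fix an arbitrary $\wp\in\PWptaun$ and split, via the triangle inequality,
$$\Vert u-\uIH\Vert_{1,\k,\taun}\le\Vert u-\wp\Vert_{1,\k,\taun}+\Vert\wp-\uIH\Vert_{1,\k,\taun},$$
so that everything reduces to bounding, element by element, the second term. On each $\E$ the function $\phi_\E:=(\wp-\uIH){}_{|\E}$ lies in $\VhHE$ (it is in the kernel of $\Delta+\k^2$ on $\E$, being the difference of $\wp{}_{|\E}\in\PWpE\subseteq\VhHE$ and $\uIH{}_{|\E}$), and its degrees of freedom coincide with those of $-(u-\wp){}_{|\E}$ by the interpolation identity above.

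The core of the argument is therefore a \emph{quantitative} counterpart of Proposition~\ref{proposition:unisolvency-Helmholtz}: a local stability estimate
$$\Vert\vhH\Vert_{1,\k,\E}\le C_{\mathrm{stab}}(\hE\k)\,\Big(\sum_{\e\in\EE}\sum_{\alpha=1}^{\NPWe}\Big|\tfrac{1}{\he}\int_\e\vhH\,\overline{\wpe}\Big|^2\Big)^{1/2}\qquad\forall\,\vhH\in\VhHE,$$
valid under~\eqref{eq:shape-regularity}, \eqref{eq:additional_assumption}, and the smallness of $\hE\k$ (this is where~\cite[equation~(4.17)]{TVEM_Helmholtz} enters). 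I would prove it by integration by parts on $\E$: using $\Delta\vhH+\k^2\vhH=0$, the impedance characterization in~\eqref{local-Helmholtz-VEM-space}, and the elementary computation $\ne\cdot\nabla\well{}_{|\e}=\i\k(\ne\cdot\dell)\well{}_{|\e}$ for the plane waves in~\eqref{plane-wave-and-directions} — which shows that $\gammatraceIE(\vhH){}_{|\e}$ belongs to $\PWpe$, i.e.\ to the span of the test functions $\wpe$ — one rewrites $\sum_{\e}\int_\e\gammatraceIE(\vhH)\overline{\vhH}$ in terms of the degrees of freedom and of $\Vert\gammatraceIE(\vhH)\Vert_{0,\partial\E}$, the latter being controlled by the same degrees of freedom through the uniform equivalence of norms on the finite-dimensional edge space $\PWpe$. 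Taking real and imaginary parts then yields $|\vhH|_{1,\E}^2-\k^2\Vert\vhH\Vert_{0,\E}^2$ and $\k\Vert\vhH\Vert_{0,\partial\E}^2$; the indefinite term $\k^2\Vert\vhH\Vert_{0,\E}^2$ is finally absorbed by a $\k$-explicit stability estimate for the impedance Helmholtz problem on the single element $\E$, which is available precisely because~\eqref{eq:additional_assumption} rules out resonances and because, for $\hE\k$ small, the element is small on the scale of the wavelength and the estimate becomes a controlled perturbation of the Poincaré inequality; tracking the $\hE\k$-dependence produces the polynomial factors in $c_{BA}$. Inserting into this estimate the degrees of freedom of $-(u-\wp){}_{|\E}$, bounding them by $\Vert u-\wp\Vert_{0,\partial\E}$ via Cauchy--Schwarz (note $|\wpe|\equiv1$ on $\e$) and then by $\Vert u-\wp\Vert_{1,\k,\E}$ via a weighted trace inequality, summing over $\E\in\taun$, and combining with the triangle inequality above gives the asserted bound with a constant of the stated product form.

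The main obstacle is exactly the local stability estimate, and within it the absorption of the indefinite term $\k^2\Vert\cdot\Vert_{0,\E}^2$: this is the only point that genuinely departs from the Laplace argument of Proposition~\ref{proposition:best-interpolation-harmonic}, where the corresponding integration-by-parts identity is directly coercive and the analogue of $\uIH$ can be compared with any piecewise harmonic polynomial essentially for free. It is also what forces the non-resonance hypothesis~\eqref{eq:additional_assumption} and the smallness of $\hE\k$, and what makes $c_{BA}$ depend on $\h\k$ through the factors $(1+c_2(\h\k)^2)(2+c_3(\h\k)^2)$. The remaining ingredients — the definition and conformity of the interpolant, the reduction to a single element via the triangle inequality, and the Cauchy--Schwarz and trace bounds for $u-\wp$ — are routine and mirror the Laplace case.
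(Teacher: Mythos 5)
Your choice of $\uIH$ --- the interpolant of $u$ through the degrees of freedom~\eqref{PW-edge-dof} --- is exactly what the paper does; beyond that single sentence the paper gives no proof and defers the technical estimate to~\cite[Theorem~4.2]{TVEM_Helmholtz}, so your sketch has to be judged on its own merits. Its architecture is the right one: triangle inequality against an arbitrary $\wp\in\PWptaun$, reduction to bounding $\phi_\E:=(\wp-\uIH)_{|\E}\in\VhHE$ elementwise, the observation that $\phi_\E$ and $(\wp-u)_{|\E}$ share all moments against $\PWpe$, integration by parts exploiting $\gammatraceIe(\phi_\E)\in\PWpe$, the real/imaginary-part splitting, and the absorption of the indefinite term $\k^2\Vert\phi_\E\Vert_{0,\E}^2$ via a Poincar\'e--trace inequality for $\hE\k$ small. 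You also correctly locate where~\eqref{eq:additional_assumption} and the smallness of $\h\k$ enter, and why the Laplace argument of Proposition~\ref{proposition:best-interpolation-harmonic} does not carry over verbatim.

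There is, however, one step that does not go through as written. You propose to state the local stability bound in terms of the $\ell^2$-norm of the DOF values and to control $\Vert\gammatraceIE(\vhH)\Vert_{0,\partial\E}$ by those DOFs ``through the uniform equivalence of norms on the finite-dimensional edge space $\PWpe$''. That equivalence is \emph{not} uniform in $\he\k$: as $\he\k\to 0$ the restricted plane waves $\wpe$ all collapse onto the constant function, the edge Gram matrix degenerates, and the equivalence constants blow up --- this ill-conditioning is precisely what motivates the orthogonalization-and-filtering of Algorithm~\ref{algorithm:ortho}. Worse, the DOFs are moments of $\vhH$ itself, not of its impedance trace, so ``controlling $\Vert\gammatraceIE(\vhH)\Vert_{0,\partial\E}$ by the DOFs'' is exactly the quantitative unisolvence you are in the middle of proving; the argument is circular at that point. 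The repair is to never leave the identity $\sum_{\e\in\EE}\int_\e\phi_\E\,\overline{\gammatraceIE(\phi_\E)}=\sum_{\e\in\EE}\int_\e(\wp-u)\,\overline{\gammatraceIE(\phi_\E)}$, which holds because $\gammatraceIe(\phi_\E)\in\PWpe$ and the two functions share all moments against $\PWpe$; one then estimates the right-hand side by Cauchy--Schwarz and bounds $\Vert\gammatraceIE(\phi_\E)\Vert_{0,\partial\E}$ through $\gammatraceIE(\phi_\E)=\i\k\phi_\E+\nE\cdot\nabla\phi_\E$ and a $\k$-explicit Neumann-trace estimate for solutions of the homogeneous Helmholtz equation on $\E$ (itself a nontrivial ingredient, but one that avoids inverting the edge Gram matrix altogether). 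With that correction your outline matches the actual proof in~\cite[Theorem~4.2]{TVEM_Helmholtz}.
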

\begin{proof}
The proof is rather
technical. Therefore, we refer to~\cite[Theorem~4.2]{TVEM_Helmholtz}
for details. There, the constants~$c_1$, $c_2$, and~$c_3$ are provided explicitly.
We can define the function~$\uIH$ as the interpolant of~$u$ through the degrees of freedom~\eqref{PW-edge-dof}.
\end{proof}

The target function~$u$ in the statement of Proposition~\ref{proposition:best-interpolation-Helmholtz} does not need to belong to the kernel of the Helmholtz operator. We only require that it belongs to~$H^1(\Omega)$.
Clearly, we need to require that~$u$ belongs to the kernel of the
Helmholtz operator if we want to combine
Proposition~\ref{proposition:best-PW} together with
Proposition~\ref{proposition:best-interpolation-Helmholtz} in order to
recover high-order approximation rates
in virtual element spaces.

%%%
\paragraph*{Projections and stabilizations.}
%%%
Recall the splitting
\[
\a(u,v) = \sum_{\E \in \taun} \aE(u_{|\E}, v_{|\E}).
\]
We consider the following discretizations of~$\a$ and~$\aE$:
\begin{equation} \label{discrete-bf-Helmholtz}
\begin{split}
\anH 	& (\uhH, \vhH)  := \sum_{\E \in \taun} \anEH(\uhH{}_{|\E}, \vhH{}_{|\E}) \\
& := \sum_{\E \in \taun} \aE(\PiHp \uhH{}_{|\E}, \PiHp \vhH{}_{|\E}) \\
& \quad + \SEH ( (I- \PiHp) \uhH{}_{|\E}, (I-\PiHp) \vhH{}_{|\E}  ),
\end{split}
\end{equation}
where we still have to define the projector~$\PiHp$ and the sesquilinear form $\SEH(\cdot, \cdot)$.
The operator~$\PiHp: \VhHE\to \PWpE$ is the projection operator with respect to the local sesquilinear form~$\aE(\cdot, \cdot)$. More precisely, for all~$\E\in \taun$, we set
\[
\aE(\PiHp \vhH -  \vhH,   \wellE)  = 0 \quad \forall \vhH \in \VhHE,\, \forall \wellE \in \PWpE.
\]
The computability of such a projector follows from the definition of
the local spaces~$\VhHE$ and of the degrees of freedom in~\eqref{abstract-edge-dof}:
\[
  \begin{split}
&  \aE(\PiHp \vhH, \well)= \aE(\vhH, \well)\\ 
& = -( \vhH, \underbrace{(\Delta + \k^2)\well}_{=0})_{0,\E} + \sum_{\e \in \EE}  (\vhH, \!\!\!\underbrace{\gammatraceIe(\well)}_{\in \gammatraceIe (\PWpE)}\!\!\!)_{0,\e} - \i\k \sum_{\e \in \EE}  (\vhH, \!\!\! \!\!\!\underbrace{\well}_{\in \gammatraceIe   (\PWpE)}\!\!\! \!\!\!)_{0,\e}.
\end{split}
\]
In order to have the well-posedness of the projector~$\PiHp$, we do not need to impose any additional computable condition.
Rather, we need to require a threshold condition on the mesh size that, in addition to~\eqref{eq:additional_assumption}, also guarantees that $k^2$ is not a Neumann-Laplace eigenvalue.
In particular, the following result is valid; see~\cite[Proposition~3.1]{TVEM_Helmholtz} and~\cite[Propositions~2.1 and~2.3]{Helmholtz-VEM}.
\begin{prop} \label{proposition:well-posedness-projector-Helmholtz}
Let \text{the} assumptions~\eqref{eq:shape-regularity}, \eqref{eq:ass_directions}, and~\eqref{eq:additional_assumption} be valid.
Moreover, assume that~$\h$ is sufficiently small, so that~$\k^2$ is smaller that the first Neumann-Laplace eigenvalue on each~$\E \in \taun$. Then, the projector~$\PiHp$ is well-defined and continuous.
More precisely, there exists a positive constant~$\beta(\hE\k)$, uniformly bounded away from zero as~$\hE \k\to 0$, such that
\begin{equation} \label{continuity:projection:Helmholtz}
\Vert \PiHp \vhH \Vert_{1,\k,\E} \le  \frac{1}{\beta(\hE \k)} \Vert \vhH \Vert_{1,\k,\E} \quad \quad \forall \vhH \in \VhHE,\, \forall \E \in \taun.
\end{equation}
\end{prop}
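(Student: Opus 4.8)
The plan is to reduce the statement to a discrete inf--sup bound for the sesquilinear form $\aE(\cdot,\cdot)$ on the finite-dimensional plane-wave space $\PWpE$, and then to control how this bound degenerates as $\hE\k\to0$.

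\emph{Step 1: reduction to a discrete inf--sup estimate.} By assumption~\eqref{eq:additional_assumption} and Proposition~\ref{proposition:unisolvency-Helmholtz}, the space $\VhHE$ — hence the domain of $\PiHp$ — is well defined, and $\dim\PWpE=2\p+1$ by the linear independence of plane waves with pairwise distinct directions, which is guaranteed by~\eqref{eq:ass_directions}. The element $\PiHp\vhH\in\PWpE$ is characterised by the $2\p+1$ linear conditions $\aE(\PiHp\vhH-\vhH,\wellE)=0$ for all $\wellE\in\PWpE$; this square system is uniquely solvable exactly when $\aE$ is nondegenerate on $\PWpE\times\PWpE$, in which case one may set
\[
\beta(\hE\k):=\inf_{0\ne w\in\PWpE}\ \sup_{0\ne q\in\PWpE}\ \frac{|\aE(w,q)|}{\Vert w\Vert_{1,\k,\E}\,\Vert q\Vert_{1,\k,\E}}\ >\ 0 .
\]
Then~\eqref{continuity:projection:Helmholtz} is immediate from the Banach--Ne\v cas--Babu\v ska argument: writing $w_\ast:=\PiHp\vhH$, the Galerkin identity $\aE(w_\ast,q)=\aE(\vhH,q)$ for all $q\in\PWpE$ together with the continuity of $\aE$ on $H^1(\E)\times H^1(\E)$ with unit constant with respect to $\Vert\cdot\Vert_{1,\k,\E}$ (a Cauchy--Schwarz applied to the pair $(\vert\cdot\vert_{1,\E},\k\,\Vert\cdot\Vert_{0,\E})$) yields $\beta(\hE\k)\Vert w_\ast\Vert_{1,\k,\E}\le\sup_{q}|\aE(\vhH,q)|/\Vert q\Vert_{1,\k,\E}\le\Vert\vhH\Vert_{1,\k,\E}$. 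Thus everything is reduced to nondegeneracy of $\aE$ on $\PWpE$ and to the lower bound $\beta(\hE\k)\ge\beta_0>0$ uniformly as $\hE\k\to0$.

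\emph{Step 2: the inf--sup bound and its behaviour as $\hE\k\to0$.} Rescale $\E$ to a shape-regular reference element of unit diameter; the only surviving parameter is $\kappa:=\hE\k$, and the rescaled basis functions $\well$ depend analytically on $\kappa$ with $\well\to1$ as $\kappa\to0$. I would combine two ingredients. First, the subspace $W_0:=\{w\in\PWpE:\int_\E w=0\}$ has codimension one in $\PWpE$ (for $\kappa$ small the mean functional is nonzero on $\PWpE$), and on $W_0$ the form $\aE$ is positive definite as soon as $\k^2$ lies below the first positive Neumann--Laplace eigenvalue $\lambda_1^{N}(\E)$ of $\E$ — which by~\eqref{eq:shape-regularity} amounts to a threshold on $\kappa$ — since for $w\in W_0$ one has $\aE(w,w)=\vert w\vert_{1,\E}^2-\k^2\Vert w\Vert_{0,\E}^2\ge(1-\k^2/\lambda_1^{N}(\E))\,\vert w\vert_{1,\E}^2$, which is strictly positive unless $w$ is constant, hence $w=0$. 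By Cauchy interlacing, the Hermitian form $\aE|_{\PWpE}$ therefore has at most one non-positive eigenvalue, so nondegeneracy reduces to showing that the remaining (real) eigenvalue — equivalently, the Schur complement of $\aE|_{W_0}$ along any $w^\ast\in\PWpE$ with $\int_\E w^\ast\ne0$ — does not vanish, and is bounded below. Second, this last point is where the plane-wave approximation estimates enter: by Proposition~\ref{proposition:best-PW}-type bounds (à la~\cite{moiola2011ZAMP}), for $\kappa$ small $\PWpE$ contains functions approximating any polynomial of degree at most $\p$ to high order, and, suitably normalised, $\aE|_{\PWpE}$ converges as $\kappa\to0$ to a fixed nondegenerate form; this gives both nondegeneracy for $\kappa$ small and $\beta(\hE\k)\ge\beta_0>0$. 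The full details are carried out in~\cite[Propositions~2.1 and~2.3]{Helmholtz-VEM} and~\cite[Proposition~3.1]{TVEM_Helmholtz}, to which I would refer.

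\emph{Main obstacle.} The genuinely delicate step is this uniform lower bound. One cannot simply pass to the limit $\kappa\to0$ in the Gram matrix $[\aE(\well,\wjE)]=\k^2[(\dell\cdot\d_j-1)(\well,\wjE)_{0,\E}]$: its leading term $-\tfrac12\k^2|\E|\,[\,|\dell-\d_j|^2\,]$ is a squared-Euclidean-distance matrix of $2\p+1$ points in $\Rbb^2$, hence of rank at most $4$ and singular for $\p\ge2$. Nondegeneracy is thus a higher-order phenomenon, recoverable only through the plane-wave approximation theory, and keeping the constant $\beta(\hE\k)$ uniform in $\kappa$ as $\kappa\to0$ is the technical heart of the argument; the Neumann-eigenvalue threshold and~\eqref{eq:additional_assumption} are precisely what make the preliminary reductions (positivity on $W_0$, well-posedness of $\VhHE$) available.
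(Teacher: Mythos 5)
The paper itself contains no proof of this proposition: it is stated with pointers to \cite[Proposition~3.1]{TVEM_Helmholtz} and \cite[Propositions~2.1 and~2.3]{Helmholtz-VEM}, so there is no internal argument to compare yours against. Taken on its own terms, your Step~1 (reduction to a discrete inf--sup bound for $\aE$ on the square system over $\PWpE$, with continuity constant one for $\aE$ in $\Vert\cdot\Vert_{1,\k,\E}$) is correct, and so are the structural facts in Step~2: positive definiteness of $\aE$ on the zero-mean subspace $W_0$ under the Neumann threshold, the interlacing conclusion that at most one eigenvalue is nonpositive, and the observation that the leading term $-\tfrac12\k^2\vert\E\vert\,[\,\vert\dell-\d_j\vert^2\,]$ of the Gram matrix has rank at most $4$, so that a naive limit $\hE\k\to0$ fails for $\p\ge2$. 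That last remark is genuinely sharp. The only real gap is that you then declare the uniform lower bound on $\beta(\hE\k)$ to be attainable ``only through the plane-wave approximation theory'' and outsource it to the references --- which leaves the actual content of the proposition unproved.

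In fact your own setup closes elementarily, without Proposition~\ref{proposition:best-PW}. Work with the generalized eigenvalues $\lambda_1\ge\dots\ge\lambda_{2\p+1}$ of the Hermitian pencil $(M,N)$ with $M_{\ell j}=\aE(\wjE,\wlE)$ and $N_{\ell j}=(\wjE,\wlE)_{1,\k,\E}$; these are basis-independent, and $\beta(\hE\k)=\min_j\vert\lambda_j\vert$. Cauchy--Schwarz gives $\vert\lambda_j\vert\le1$; your coercivity on $W_0$ plus Courant--Fischer interlacing gives $\lambda_j\ge(1-\k^2/\lambda_1^{N}(\E))/(1+\k^2/\lambda_1^{N}(\E))\ge c>0$ for $j\le2\p$, with $\k^2/\lambda_1^{N}(\E)\lesssim(\hE\k)^2$ by \eqref{eq:shape-regularity}. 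The one remaining eigenvalue is pinned down by a single explicit test function: since $\aE(\wlE,\wlE)=0$ for every plane wave, while $\aE(w_1^\E,w_2^\E)=\k^2(\d_1\cdot\d_2-1)\int_\E e^{\i\k(\d_1-\d_2)\cdot(\xbf-\xbfE)}$ has real part bounded above by $\tfrac12\k^2\vert\E\vert(\d_1\cdot\d_2-1)<0$ for $\hE\k$ small, and $\Vert w_1^\E+w_2^\E\Vert_{1,\k,\E}^2\le8\k^2\vert\E\vert$, the Rayleigh quotient of $w_1^\E+w_2^\E$ is at most $(\d_1\cdot\d_2-1)/8\le-c'<0$ uniformly, with $c'$ controlled by \eqref{eq:ass_directions}. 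Hence $\lambda_{2\p+1}\le-c'$, all eigenvalues satisfy $\vert\lambda_j\vert\ge\min(c,c')$, and $\beta(\hE\k)\ge\min(c,c')$ uniformly as $\hE\k\to0$. This also resolves the tension you flagged: the rank-$4$ degeneracy of $M$ is matched by a compatible degeneracy of $N$ (whose leading term $\k^2\vert\E\vert[\dell\cdot\d_j+1]$ has rank at most $3$), and the generalized eigenvalues never see the ill-conditioning of the plane-wave basis.
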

\medskip

By defining
\begin{equation} \label{betamin}
\betamin := \min_{\E \in \taun} \beta(\hE \k),
\end{equation}
inequality~\eqref{continuity:projection:Helmholtz} implies
\begin{equation}\label{continuity:projection:HelmholtzNEW}
\Vert \PiHp \vhH \Vert_{1,\k,\E} \le  \frac{1}{\betamin} \Vert \vhH \Vert_{1,\k,\E} \quad \quad \forall \vhH \in \VhHE,\, \forall \E \in \taun.
\end{equation}

We observe that the choice of the degrees of
freedom~\eqref{PW-edge-dof} also allows us to compute the $L^2$-edge projector $\PieH: \VhHE{}_{|\e} \rightarrow \gammatraceIe (\PWpE){}_{|\e})$ into traces of plane waves, which is defined as follows:
for all~$\e \in \EE$,
\begin{equation} \label{projector:L2-edge-Helmholtz}
(\PieH \vhH{}_{|\e} -  \vhH{}_{|\e}, \gammatraceIe (\wellE))_{0,\e}=0   \quad  \forall \vhH \in\VhHE,\; \forall \wellE \in \PWp(\E).
\end{equation}
This projector is needed for the discretization of the boundary terms, i.e., the sesquilinear form~$\b(\cdot, \cdot)$ and the right-hand side~$(\g,\cdot)_{0,\partial \Omega}$ appearing in~\eqref{Helmholtz:weak}.
We introduce
\begin{equation}\label{eq:bH}
\begin{split}
& \bnH(\uhH, \vhH) := \i\k \sum_{\e \in \EhB} (\PieH \uhH{}_{|\e}, \PieH \vhH{}_{|\e})_{0,\e},\\
&  (\g,\vhH)_{0, \partial \Omega}   \approx \sum_{\e \in \EhB} (\g, \PieH \vhH{}_{|\e})_{0, \e}.
\end{split}
\end{equation}
With respect to the abstract setting in Section~\ref{section:general-framework}, the form~$G(\cdot)$ is here given by
\[
G(\vhH) := \int_{\partial \Omega} \g \overline{\vhH}.
\]

The last ingredient we need is a stabilization~$\SEH(\cdot, \cdot)$ for all~$\E \in \taun$, which is computable via the degrees of freedom~\eqref{PW-edge-dof} and satisfies certain properties.
So far, we have recalled the setting of~\cite{TVEM_Helmholtz}.
Here, we weaken the assumptions on the stabilization demanded there, and yet deduce the well-posedness and convergence for the method.

More precisely, for all~$\E \in \taun$, we require that
\begin{equation} \label{weak-stab:coercivity}
\SEH(\vhH, \vhH)\ge \vert \vhH \vert ^2_{1,\E} - (1+C_S)   \k^2 \Vert \vhH \Vert^2_{0,\E}  \quad \quad \forall \vhH \in \ker(\PiHp)
\end{equation}
and the continuity
\begin{equation} \label{continuity:stab:Helmholtz}
\vert \SEH(\uhH ,\vhH) \vert \le C_C \Vert \uhH \Vert_{1,\k,\E} \Vert \vhH \Vert_{1,\k,\E}     \qquad\forall \uhH ,\vhH\in \ker(\PiHp),
\end{equation}
where~$C_S$ and~$C_C$ are two positive constants independent of~$\k$, with~$C_S= C_S(\h) \to 0$ as~$\h\to 0$.

With these choices, we are in a position to prove the following ``weak'' version of the G\r arding inequality.
\begin{prop} \label{prop:weak-Garding}
For every~$\E \in \taun$, let the stabilization~$\SEH(\cdot , \cdot)$ satisfy~\eqref{weak-stab:coercivity}. Then, the following G\r arding-type inequality is valid:
\begin{equation} \label{weak:Garding}
\begin{split}
\RE 
&[\anH(\vhH ,\vhH)  + \bnH(\vhH , \vhH)]  + 2\k^2  \Vert \vhH \Vert^2_{0,\Omega} \\
& + C_S \k^2 \sum_{\E \in\taun} \Vert (I-\PiHp) \vhH \Vert^2_{0,\E} \ge \Vert \vhH \Vert^2_{1,\k,\taun}    \; \quad \forall \vhH \in \VhH,
\end{split}
\end{equation}
where~$C_S=C_S(\h) \to 0$ as~$\h\to 0$ is the constant in~\eqref{weak-stab:coercivity}.
\end{prop}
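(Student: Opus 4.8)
The plan is to work element by element and then sum. For a fixed $\E\in\taun$ and $\vhH\in\VhH$, write $\vhH{}_{|\E}$ as $\PiHp\vhH{}_{|\E} + (I-\PiHp)\vhH{}_{|\E}$ inside the definition~\eqref{discrete-bf-Helmholtz} of $\anEH$, so that
\[
\anEH(\vhH{}_{|\E},\vhH{}_{|\E}) = \aE(\PiHp\vhH{}_{|\E},\PiHp\vhH{}_{|\E}) + \SEH((I-\PiHp)\vhH{}_{|\E},(I-\PiHp)\vhH{}_{|\E}).
\]
First I would take the real part. For the projected part, $\Real \aE(w,w) = |w|_{1,\E}^2 - \k^2\|w\|_{0,\E}^2$ since $\aE(u,v)=(\nabla u,\nabla v)_{0,\E}-\k^2(u,v)_{0,\E}$; for the stabilization part, I apply the lower bound~\eqref{weak-stab:coercivity} to $(I-\PiHp)\vhH{}_{|\E}\in\ker(\PiHp)$. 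This gives
\[
\Real\anEH(\vhH,\vhH) \ge |\PiHp\vhH|_{1,\E}^2 - \k^2\|\PiHp\vhH\|_{0,\E}^2 + |(I-\PiHp)\vhH|_{1,\E}^2 - (1+C_S)\k^2\|(I-\PiHp)\vhH\|_{0,\E}^2.
\]

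Next I would reassemble the $H^1$-seminorm terms. The key algebraic fact is the Pythagoras-type identity $|\PiHp\vhH|_{1,\E}^2 + |(I-\PiHp)\vhH|_{1,\E}^2 = |\vhH|_{1,\E}^2$, which holds because $\PiHp$ is the $\aE$-projection onto $\PWpE$ and the gradient inner product makes the two pieces orthogonal — indeed $\aE(\PiHp\vhH - \vhH, \wellE)=0$ for all $\wellE\in\PWpE$, and since $\Delta\wellE+\k^2\wellE=0$ one checks that the imaginary part forces $(\nabla(\PiHp\vhH-\vhH),\nabla\wellE)_{0,\E}$ and $(\PiHp\vhH-\vhH,\wellE)_{0,\E}$ to be separately compatible with the Pythagoras split of the full $\aE$-form; the cleanest route is to note $\Real\aE(\vhH,\vhH)=\Real\aE(\PiHp\vhH,\PiHp\vhH)+\Real\aE((I-\PiHp)\vhH,(I-\PiHp)\vhH)$ by $\aE$-orthogonality, which unpacks termwise into the two identities $|\vhH|_{1,\E}^2=|\PiHp\vhH|_{1,\E}^2+|(I-\PiHp)\vhH|_{1,\E}^2$ and $\|\vhH\|_{0,\E}^2=\|\PiHp\vhH\|_{0,\E}^2+\|(I-\PiHp)\vhH\|_{0,\E}^2$ — actually only the combined identity is needed, and it is exactly what makes the bookkeeping work. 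Using it, the seminorm contributions collapse to $|\vhH|_{1,\E}^2$, and the $-\k^2\|\PiHp\vhH\|_{0,\E}^2 - (1+C_S)\k^2\|(I-\PiHp)\vhH\|_{0,\E}^2$ is bounded below by $-(1+C_S)\k^2(\|\PiHp\vhH\|_{0,\E}^2+\|(I-\PiHp)\vhH\|_{0,\E}^2)$, and by the $L^2$-Pythagoras identity this equals $-(1+C_S)\k^2\|\vhH\|_{0,\E}^2$, i.e. $-\k^2\|\vhH\|_{0,\E}^2 - C_S\k^2\|(I-\PiHp)\vhH\|_{0,\E}^2$ after putting the $C_S$ term back on the $(I-\PiHp)$ piece (which is where the $C_S\k^2\sum_\E\|(I-\PiHp)\vhH\|_{0,\E}^2$ term in~\eqref{weak:Garding} comes from). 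Summing over $\E\in\taun$ yields
\[
\Real\anH(\vhH,\vhH) \ge |\vhH|_{1,\taun}^2 - \k^2\|\vhH\|_{0,\Omega}^2 - C_S\k^2\sum_{\E\in\taun}\|(I-\PiHp)\vhH\|_{0,\E}^2.
\]

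Then I would handle the boundary form. From~\eqref{eq:bH}, $\bnH(\vhH,\vhH)=\i\k\sum_{\e\in\EhB}\|\PieH\vhH{}_{|\e}\|_{0,\e}^2$ is purely imaginary, so $\Real\bnH(\vhH,\vhH)=0$ and it contributes nothing. Finally, to reach the stated inequality I recall that $\|\vhH\|_{1,\k,\taun}^2 = |\vhH|_{1,\taun}^2 + \k^2\|\vhH\|_{0,\Omega}^2$ by definition of the weighted norm, so adding $2\k^2\|\vhH\|_{0,\Omega}^2$ to the right-hand side above turns $|\vhH|_{1,\taun}^2-\k^2\|\vhH\|_{0,\Omega}^2$ into $|\vhH|_{1,\taun}^2+\k^2\|\vhH\|_{0,\Omega}^2 = \|\vhH\|_{1,\k,\taun}^2$, which is precisely~\eqref{weak:Garding}. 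I would remark that only the coercivity-type bound~\eqref{weak-stab:coercivity} is used; the continuity~\eqref{continuity:stab:Helmholtz} plays no role here and is needed elsewhere.

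\textbf{Main obstacle.} The only genuinely delicate point is justifying that the real parts split orthogonally across $\PiHp$ and $I-\PiHp$ — i.e. that $\Real\aE(\vhH,\vhH)$ decomposes as the sum of the real parts on the two components — because $\aE$ is indefinite (the $-\k^2$ term) and complex. This follows from $\aE((I-\PiHp)\vhH,\PiHp\vhH)=0$ (definition of the projection, applied with $\wellE=\PiHp\vhH\in\PWpE$) together with taking real parts, but one must be careful that the mixed term $\aE(\PiHp\vhH,(I-\PiHp)\vhH)$ is the conjugate of the vanishing term and hence also has zero real part; I would spell this out. Everything else is a direct substitution of definitions.
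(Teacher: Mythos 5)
Your overall strategy is essentially the paper's: take real parts, apply the lower bound~\eqref{weak-stab:coercivity} to the $(I-\PiHp)$ component, note that $\bnH(\vhH,\vhH)$ is purely imaginary, exploit the $\aE$-orthogonality of the projector to recombine the two components, and finally add $2\k^2\Vert\vhH\Vert_{0,\Omega}^2$. The conclusion and the role of each hypothesis (only~\eqref{weak-stab:coercivity} is needed, not~\eqref{continuity:stab:Helmholtz}) are correctly identified.

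There is, however, a genuine error in the central algebraic step. The two ``Pythagoras'' identities $\vert\vhH\vert_{1,\E}^2=\vert\PiHp\vhH\vert_{1,\E}^2+\vert(I-\PiHp)\vhH\vert_{1,\E}^2$ and $\Vert\vhH\Vert_{0,\E}^2=\Vert\PiHp\vhH\Vert_{0,\E}^2+\Vert(I-\PiHp)\vhH\Vert_{0,\E}^2$ are false in general: $\PiHp$ is orthogonal with respect to $\aE(\cdot,\cdot)=(\nabla\cdot,\nabla\cdot)_{0,\E}-\k^2(\cdot,\cdot)_{0,\E}$, not with respect to the gradient and $L^2$ inner products separately. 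The orthogonality $\aE((I-\PiHp)\vhH,\PiHp\vhH)=0$ yields only the single relation $(\nabla\PiHp\vhH,\nabla(I-\PiHp)\vhH)_{0,\E}=\k^2(\PiHp\vhH,(I-\PiHp)\vhH)_{0,\E}$; the two cross terms need not vanish individually, they merely cancel in the combination $\vert\cdot\vert_{1,\E}^2-\k^2\Vert\cdot\Vert_{0,\E}^2$. Hence the steps ``the seminorm contributions collapse to $\vert\vhH\vert_{1,\E}^2$'' and ``by the $L^2$-Pythagoras identity this equals $-(1+C_S)\k^2\Vert\vhH\Vert_{0,\E}^2$'' are unjustified as written (and the subsequent ``putting the $C_S$ term back'' retracts a lower bound you had already taken, which makes that passage incoherent even granting the identity). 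The repair is exactly your own parenthetical remark, carried out consistently: after applying~\eqref{weak-stab:coercivity}, group the surviving terms as $\aE(\PiHp\vhH,\PiHp\vhH)+\aE((I-\PiHp)\vhH,(I-\PiHp)\vhH)=\aE(\vhH,\vhH)=\vert\vhH\vert_{1,\E}^2-\k^2\Vert\vhH\Vert_{0,\E}^2$ (both cross terms vanish: one by the definition of $\PiHp$ tested with $\PiHp\vhH\in\PWpE$, the other because $\aE$ is Hermitian), and carry $-C_S\k^2\Vert(I-\PiHp)\vhH\Vert_{0,\E}^2$ separately; adding $2\k^2\Vert\vhH\Vert_{0,\E}^2$ and summing over $\E$ gives~\eqref{weak:Garding} directly. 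The paper organizes the same cancellation differently: it expands $2\k^2\Vert\vhH\Vert_{0,\Omega}^2$ elementwise including the $L^2$ cross term, uses the orthogonality relation to trade half of that cross term for the gradient cross term, and then completes the two squares $\vert\vhH\vert_{1,\E}^2$ and $\k^2\Vert\vhH\Vert_{0,\E}^2$. Either bookkeeping is fine, but only once the separate Pythagorean claims are dropped.
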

\begin{proof}
The proof follows along the same lines as the one of~\cite[Proposition~4.2]{Helmholtz-VEM}. For the sake of completeness, we carry out the details here.
From~\eqref{eq:bH}, \eqref{discrete-bf-Helmholtz}, and
simple algebra, we get
\[
\begin{split}
& \RE [\anH(\vhH ,\vhH)  + \bnH(\vhH , \vhH) ] + 2\k^2 \Vert  \vhH \Vert^2_{0,\Omega}  = \anH(\vhH, \vhH) + 2\k^2 \Vert \vhH \Vert ^2_{0,\Omega} \\
& = \sum_{\E \in \taun} \left\{  \aE (\PiHp \vhH, \PiHp \vhH) + 2\k^2 \Vert \PiHp \vhH \Vert^2_{0,\E}   \right\}\\
& \quad  + \sum_{\E \in \taun} \left\{ \SEH ( (I\!-\!\PiHp) \vhH, (I\!-\!\PiHp)\vhH)  + 2\k^2 \Vert (I\!-\!\PiHp) \vhH \Vert^2_{0,\E}  \right\} \\
& \quad + \sum_{\E \in \taun} 4\k^2 \RE\left[ \int_\E \PiHp \vhH \overline{ (I\!-\!\PiHp) \vhH}   \right].
\end{split}
\]
Then, using~\eqref{weak-stab:coercivity} and simple calculations, we deduce
\[
\begin{split}
& \RE [\anH	 (\vhH ,\vhH)  + \bnH(\vhH , \vhH)] + 2\k^2 \Vert \vhH \Vert^2_{0,\Omega}  \\
& \ge \sum_{\E \in \taun} \left\{  \vert \PiHp \vhH \vert _{1,\E}^2 + \k^2 \Vert \PiHp \vhH \Vert^2_{0,\E}   \right\}\\
& \quad + \sum_{\E \in \taun} \left\{ \vert (I-\PiHp)\vhH \vert^2_{1,\E} + (1-C_S) \k^2 \Vert (I-\PiHp) \vhH \Vert^2_{0,\E}    \right\} \\
& \quad + \sum_{\E \in \taun} 2\, \RE \left[ \int_\E \nabla \PiHp \vhH \overline{ \nabla (I-\PiHp) \vhH}   \right] \\
& \quad + \sum_{\E \in \taun} 2\k^2 \RE\left[ \int_\E \PiHp \vhH \overline{ (I-\PiHp) \vhH}   \right] .\\
\end{split}
\]
Thus, we have
\[
\begin{split}
\RE [\anH(\vhH ,\vhH) &+ \bnH(\vhH , \vhH)] + 2\k^2 \Vert \vhH \Vert^2_{0,\Omega} + C_S \k^2 \sum_{\E \in \taun}\Vert (I-\PiHp)\vhH \Vert^2_{0,\E} \\
& \ge \sum_{\E \in \taun} \bigg \{  \vert \PiHp \vhH  \vert^2_{1,\E}
  + \vert (I-\PiHp) \vhH \vert^2_{1,\E} \\
& \qquad\qquad + 2\, \RE
  \left[ \int_\E \nabla \PiHp \vhH \cdot \overline{ \nabla (I-\PiHp) \vhH }    \right]    \bigg\} \\
& \quad + \sum_{\E \in \taun} \bigg\{  \k^2 \Vert \PiHp \vhH \Vert ^2_{0,\E} + \k^2 \Vert (I-\PiHp) \vhH \Vert^2_{0,\E} \\
& \qquad\qquad+2\k^2 \mathbb {RE} \left[ \int_\E \PiHp \vhH \overline{(I-\PiHp)\vhH} \right]   \bigg\}\\
& = \sum_{\E \in \taun} \left(  \vert \vhH \vert^2_{1,\E} + \k^2 \Vert \vhH \Vert ^2_{0,\E}  \right) = \Vert \vhH \Vert_{1,\k,\taun}^2 ,
\end{split}
\]
whence the assertion follows.
\end{proof}

Assuming~\eqref{continuity:stab:Helmholtz}, we also get the continuity of the discrete sesquilinear form~$\anH(\cdot,\cdot)$ in~\eqref{discrete-bf-Helmholtz}.
\begin{prop} \label{prop:continuity:Helmholtz}
Under assumption~\eqref{continuity:stab:Helmholtz}, the discrete
sesquilinear form $\anH(\cdot , \cdot)$ in~\eqref{discrete-bf-Helmholtz} satisfies
\begin{equation} \label{continuity:bf:Helmholtz}
\anH(\uhH, \vhH) \le \frac{1+ C_C(1+\betamin) ^2}{\betamin^2} \Vert \uhH \Vert_{1,k,\taun} \Vert \vhH \Vert_{1,k,\taun},
\end{equation}
where~$C_C$ is the constant in~\eqref{continuity:stab:Helmholtz} and~$\betamin$ is defined in~\eqref{betamin}.
\end{prop}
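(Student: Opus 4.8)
The plan is to estimate the two contributions to $\anEH(\cdot,\cdot)$ separately on each element and then sum up with a discrete Cauchy--Schwarz inequality. Recall that
\[
\anEH(\uhH{}_{|\E}, \vhH{}_{|\E}) = \aE(\PiHp \uhH{}_{|\E}, \PiHp \vhH{}_{|\E}) + \SEH ( (I- \PiHp) \uhH{}_{|\E}, (I-\PiHp) \vhH{}_{|\E}  ),
\]
so I would bound the ``plane-wave'' part $\aE(\PiHp \uhH{}_{|\E}, \PiHp \vhH{}_{|\E})$ and the stabilization part in turn, each in terms of $\Vert \uhH \Vert_{1,\k,\E} \Vert \vhH \Vert_{1,\k,\E}$, and only at the very end combine the elementwise bounds.

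For the plane-wave part, I would first record the elementary fact that for all $u,v\in H^1(\E)$ the Cauchy--Schwarz inequality gives $\vert \aE(u,v)\vert \le \vert u\vert_{1,\E}\vert v\vert_{1,\E} + \k^2\Vert u\Vert_{0,\E}\Vert v\Vert_{0,\E}$, and then a further Cauchy--Schwarz applied to this two-term sum yields $\vert \aE(u,v)\vert \le \Vert u\Vert_{1,\k,\E}\Vert v\Vert_{1,\k,\E}$, since $\Vert \cdot\Vert_{1,\k,\E}^2 = \vert\cdot\vert_{1,\E}^2 + \k^2\Vert\cdot\Vert_{0,\E}^2$. Taking $u=\PiHp \uhH{}_{|\E}$, $v=\PiHp \vhH{}_{|\E}$ (which lie in $\PWpE\subset\VhHE$, so that \eqref{continuity:projection:HelmholtzNEW} applies) gives
\[
\vert \aE(\PiHp \uhH{}_{|\E}, \PiHp \vhH{}_{|\E})\vert \le \Vert \PiHp \uhH{}_{|\E}\Vert_{1,\k,\E}\Vert \PiHp \vhH{}_{|\E}\Vert_{1,\k,\E} \le \frac{1}{\betamin^2}\Vert \uhH\Vert_{1,\k,\E}\Vert \vhH\Vert_{1,\k,\E}.
\]
For the stabilization part, since $(I-\PiHp)\uhH{}_{|\E}$ and $(I-\PiHp)\vhH{}_{|\E}$ belong to $\ker(\PiHp)$, assumption~\eqref{continuity:stab:Helmholtz} is applicable; bounding $\Vert (I-\PiHp)\vhH{}_{|\E}\Vert_{1,\k,\E} \le \Vert \vhH\Vert_{1,\k,\E} + \Vert \PiHp\vhH{}_{|\E}\Vert_{1,\k,\E} \le \frac{1+\betamin}{\betamin}\Vert \vhH\Vert_{1,\k,\E}$ by the triangle inequality and \eqref{continuity:projection:HelmholtzNEW}, and likewise for $\uhH$, I get
\[
\vert \SEH((I-\PiHp)\uhH{}_{|\E},(I-\PiHp)\vhH{}_{|\E})\vert \le C_C\frac{(1+\betamin)^2}{\betamin^2}\Vert \uhH\Vert_{1,\k,\E}\Vert \vhH\Vert_{1,\k,\E}.
\]

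Adding the two elementwise estimates produces the constant $\frac{1+C_C(1+\betamin)^2}{\betamin^2}$ on each $\E\in\taun$, and then I would sum over the mesh and apply the discrete Cauchy--Schwarz inequality $\sum_{\E}a_\E b_\E \le (\sum_\E a_\E^2)^{1/2}(\sum_\E b_\E^2)^{1/2}$ with $a_\E=\Vert \uhH\Vert_{1,\k,\E}$ and $b_\E=\Vert \vhH\Vert_{1,\k,\E}$, recalling that $\Vert\cdot\Vert_{1,\k,\taun}^2 = \sum_\E\Vert\cdot\Vert_{1,\k,\E}^2$; this yields exactly~\eqref{continuity:bf:Helmholtz}. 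There is essentially no hard step: the argument is a routine chain of Cauchy--Schwarz and triangle inequalities. The only points needing a little care are recognizing that $\PiHp\vhH{}_{|\E}\in\VhHE$ so that the projector-continuity bound \eqref{continuity:projection:HelmholtzNEW} may be invoked, and keeping the two factors $\Vert\uhH\Vert_{1,\k,\E}$ and $\Vert\vhH\Vert_{1,\k,\E}$ cleanly separated before the final discrete Cauchy--Schwarz, so that the constant involving $\betamin$ factors out uniformly over the mesh.
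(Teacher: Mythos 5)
Your proof is correct and follows essentially the same route as the paper: elementwise Cauchy--Schwarz on $\aE$ combined with the projector bound~\eqref{continuity:projection:HelmholtzNEW} for the projected part, the continuity assumption~\eqref{continuity:stab:Helmholtz} plus a triangle inequality for the stabilization part, and a final discrete Cauchy--Schwarz over the mesh, yielding exactly the constant $\frac{1+C_C(1+\betamin)^2}{\betamin^2}$.
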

\begin{proof}
We have
\[
      \begin{split}
        \anH(\uhH,  \vhH) &= \sum_{\E \in \taun} \left\{  \aE (\PiHp \uhH, \PiHp \vhH)   \right.\\
&\left. \qquad\qquad+   \SEH \left( (I-\PiHp)\uhH , (I-\PiHp) \vhH  \right)    \right\}   \\
& \overset{\eqref{continuity:stab:Helmholtz}}{\le} \sum_{\E \in \taun} \left\{  \Vert \PiHp \uhH \Vert_{1,\k,\E}  \Vert \PiHp \vhH \Vert_{1,\k,\E} \right.\\
&\left. \qquad\qquad+ C_C \Vert (I-\PiHp) \uhH \Vert_{1,\k,\E}  \Vert (I-\PiHp) \vhH \Vert_{1,\k,\E}\right\} \\
& \overset{\eqref{continuity:projection:HelmholtzNEW}}{\le}  \sum_{\E \in \taun} \frac{1+ C_C(1+\betamin)^2}{\betamin^2} \Vert \uhH \Vert_{1,\k,\E} \Vert \vhH \Vert_{1,\k,\E}  
					\\
&\;\le \frac{1+ C_C(1+\betamin)^2}{\betamin^2} \Vert \uhH \Vert_{1,\k,\taun} \Vert \vhH \Vert_{1,\k,\taun}.\\
\end{split}
\]
\end{proof}

\begin{remark} \label{remark:old-Helmholtz-setting-stab}
In~\cite{TVEM_Helmholtz}, the assumption on the stabilization~$\SEH(\cdot, \cdot)$ was slightly stronger than~\eqref{weak-stab:coercivity}, namely we required
\begin{equation}\label{eq:strongerGarding}
\SEH(\vhH, \vhH)\ge   \vert \vhH \vert ^2_{1,\E} - \k^2 \Vert \vhH \Vert^2_{0,\E} \quad \quad \forall \vhH \in \ker(\PiHp).
\end{equation}
This assumption results in the following stronger version of the G\r arding inequality:
\[
\RE[\anH(\vhH ,\vhH) + \bnH(\vhH , \vhH)]  + 2\k^2 \Vert \vhH \Vert^2_{0,\Omega}  \ge \Vert\vhH \Vert^2_{1,\k,\taun}    \; \quad \forall \vhH \in \VhH.
\]
As we will see in Theorem~\ref{theorem:abstract-Helmholtz} below, the present weaker setting still allows us to derive an abstract error analysis for the method~\eqref{ncTVEM:Helmholtz} below.
The advantage of the new setting is that the design of a computable stabilization~$\SEH(\cdot, \cdot)$ becomes easier.

For a stronger version of Theorem~\ref{theorem:abstract-Helmholtz} below, relying on \text{the} assumption~\eqref{eq:strongerGarding} instead of~\eqref{weak:Garding}, we refer to~\cite[Theorem~4.3]{TVEM_Helmholtz}.
\end{remark}

%%%
\paragraph*{The method.}
%%%
We have introduced all the ingredients needed for the design of the nonconforming Trefftz VEM for the Helmholtz problem:
\begin{equation} \label{ncTVEM:Helmholtz}
\begin{cases}
\text{find } \uhH \in \VhH \text{ such that}\\
\anH(\uhH, \vhH) + \bnH(\uhH, \vhH) = (g, \PieH \vhH) \quad \quad \forall \vhH \in \VhH.
\end{cases}
\end{equation}
The well-posedness of the method follows by using a Schatz-type argument, as detailed in Theorem~\ref{theorem:abstract-Helmholtz} below.

%%%
\paragraph*{Convergence analysis.}
%%%
In the following theorem, we prove well-posedness and abstract error estimates for method~\eqref{ncTVEM:Helmholtz}.
In particular, the error of the method is controlled by two terms: a best approximation estimate in discontinuous, piecewise plane wave spaces and an estimate of the approximation of the boundary condition~$\g$.
For simplicity, an additional term involving the nonconformity of the method, which is hidden in the proof, is not explicitly reported.
Proposition~\ref{proposition:best-interpolation-Helmholtz} is used in order to absorb an interpolation error term within the best approximation in discontinuous, piecewise plane wave spaces.

\begin{thm} \label{theorem:abstract-Helmholtz}
Let the solution~$u$ to~\eqref{Laplace:weak} be in~$H^2(\Omega)$. Let the number of local plane wave directions in~\eqref{plane-wave-and-directions} be~$2\p+1$, with~$\p\ge 2$. 
Let \text{the} assumptions~\eqref{eq:shape-regularity} and~\eqref{eq:additional_assumption} on the meshes, \text{the} assumption~\eqref{eq:ass_directions} on the local plane wave directions,
and \text{the} assumptions~\eqref{weak-stab:coercivity} and~\eqref{continuity:stab:Helmholtz} on the local stabilization forms be valid.
Additionally, we require that~$\h\k^2$ is sufficiently small; see~\cite[eqt. (4.65)]{TVEM_Helmholtz}.
Then, there exists a unique solution~$\uhH$ to the method~\eqref{ncTVEM:Helmholtz}, and the following a priori estimate is valid:
\[
\begin{split}
\Vert u - \uhH \Vert_{1,\k, \taun} 	& \lesssim c_1(\h,\k) \Vert u - \wellE \Vert _{1,\k,\taun} + \h\, c_2(\h,\k) \vert u - \wellE \vert_{2,\taun}  \\
							& \quad + \h^{\frac{1}{2}} c_2(\h,\k) \Vert \g - \PieH \g \Vert_{0,\partial \Omega} \quad \forall \wellE \in \PWp(\Omega, \taun),
\end{split}
\]
where~$c_1$ and~$c_2$ are two constants, which remain bounded as~$\h\k^2 \to 0$.
Indeed, we can express such constants explicitly; see the statement of~\cite[Theorem~4.3]{TVEM_Helmholtz}.
\end{thm}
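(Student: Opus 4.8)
The plan is to run a standard Schatz-type (duality/compactness) argument adapted to the nonconforming Trefftz VEM, closely following the structure of \cite[Theorem~4.3]{TVEM_Helmholtz} but feeding in the weaker Gårding inequality~\eqref{weak:Garding} in place of the stronger one of Remark~\ref{remark:old-Helmholtz-setting-stab}. First I would establish a quasi-optimality estimate modulo a lower-order $L^2$-term. Writing the error as $u-\uhH = (u-\uIH) + (\uIH - \uhH)$ with $\uIH\in\VhH$ the interpolant from Proposition~\ref{proposition:best-interpolation-Helmholtz}, I would set $\delta_\h := \uIH - \uhH \in \VhH$ and apply~\eqref{weak:Garding} to $\delta_\h$. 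The troublesome term $C_S\k^2\sum_\E\|(I-\PiHp)\delta_\h\|_{0,\E}^2$ must be absorbed: since $C_S=C_S(\h)\to0$, it can be hidden in the left-hand side provided $\|(I-\PiHp)\delta_\h\|_{1,\k,\taun}\lesssim \|\delta_\h\|_{1,\k,\taun}$, which follows from~\eqref{continuity:projection:HelmholtzNEW}. Then I would expand $\RE[\anH(\delta_\h,\delta_\h)+\bnH(\delta_\h,\delta_\h)]$ using the discrete equation~\eqref{ncTVEM:Helmholtz} and the continuous equation~\eqref{Helmholtz:weak}, producing consistency error terms: (i) the difference between $\aE(\cdot,\cdot)$ and its projected/stabilized version $\anEH$ applied to $u$ (more precisely to $\uIH$), controlled via the continuity of the stabilization~\eqref{continuity:stab:Helmholtz}, the projector bound, and the interpolation estimate; (ii) the nonconformity term $\sum_{\e}\int_\e (\nOmega\cdot\nabla u)\llbracket\delta_\h\rrbracket$ on interior edges (the analogue of $\Ncalh$ in~\eqref{nonconforming-bf}), bounded by standard nonconforming-Galerkin arguments using the orthogonality built into $\HncpH$ and the regularity $u\in H^2(\Omega)$, which yields the $\h|u-\wellE|_{2,\taun}$ contribution; (iii) the boundary-term mismatch between $(\g,\vhH)_{0,\partial\Omega}$ and $(\g,\PieH\vhH)_{0,\partial\Omega}$ together with the discretization $\bnH$ of $\b$, controlled by properties of the edge projector $\PieH$ and a trace/scaled-trace inequality, giving the $\h^{1/2}\|\g-\PieH\g\|_{0,\partial\Omega}$ term. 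Collecting these yields
\[
\|\delta_\h\|_{1,\k,\taun}^2 \lesssim \k^2\|\delta_\h\|_{0,\Omega}^2 + \big(\text{best-approx + boundary terms}\big)\,\|\delta_\h\|_{1,\k,\taun},
\]
hence $\|u-\uhH\|_{1,\k,\taun} \lesssim \k\|u-\uhH\|_{0,\Omega} + (\text{RHS of the theorem})$, after using the triangle inequality and Proposition~\ref{proposition:best-interpolation-Helmholtz} to convert the interpolation error into a best-approximation error in $\PWptaun$.

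Second, I would remove the $\k\|u-\uhH\|_{0,\Omega}$ term by an Aubin–Nitsche duality argument. Let $z\in H^1(\Omega)$ solve the adjoint impedance problem with data $u-\uhH$ (or rather $e_\h := u - \uhH$) in $L^2(\Omega)$; by elliptic regularity for the Helmholtz problem on the polygonal domain $\Omega$ one has $z\in H^2(\Omega)$ with a stability bound $\|z\|_{2,\k,\Omega}\lesssim C_{\text{stab}}(\k)\|e_\h\|_{0,\Omega}$, where the $\k$-dependence of $C_{\text{stab}}$ is controlled. Testing the adjoint problem against $e_\h$, inserting the interpolant $\zn := z_{\mathrm I}^H\in\VhH$, and exploiting Galerkin-type orthogonality of the method modulo consistency terms, I would obtain
\[
\|e_\h\|_{0,\Omega}^2 \lesssim \big(\text{consistency in }z\big)\cdot\|e_\h\|_{1,\k,\taun} + \big(\text{nonconformity/boundary in }z\big),
\]
where every term carries a positive power of $\h$ coming from the interpolation estimate applied to $z\in H^2$. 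Combining with the first step and absorbing, the smallness hypothesis on $\h\k^2$ is exactly what lets $\k\cdot\h\cdot C_{\text{stab}}(\k)$ be made $\le 1/2$, closing the argument and simultaneously delivering both existence/uniqueness of $\uhH$ (the homogeneous problem $e_\h$ satisfies $\|e_\h\|_{1,\k,\taun}\lesssim \h\k^2\|e_\h\|_{1,\k,\taun}$, forcing $e_\h=0$ on the finite-dimensional space) and the stated a priori bound, with $c_1,c_2$ packaging the constants from~\eqref{continuity:stab:Helmholtz}, \eqref{continuity:projection:HelmholtzNEW}, Proposition~\ref{proposition:best-interpolation-Helmholtz}, the trace inequalities, and $C_{\text{stab}}(\k)$; these remain bounded as $\h\k^2\to0$.

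The main obstacle is the bookkeeping of the $\k$-weighted consistency and nonconformity terms so that the final constants $c_1,c_2$ genuinely stay bounded as $\h\k^2\to0$ under the \emph{weakened} stabilization hypothesis. Concretely, the delicate point is that~\eqref{weak:Garding} carries the extra term $C_S\k^2\sum_\E\|(I-\PiHp)\vhH\|_{0,\E}^2$ on the \emph{wrong} side; one must verify that after the absorption this term only degrades the constant by a factor $(1-C_S(\h))^{-1}$, which is harmless for $\h$ small, rather than spoiling the $\h$-power in front of the $L^2$-error that the duality step relies on. A secondary technical nuisance is handling the edge projector $\PieH$ in the boundary term: one needs that $\PieH$ is a bounded $L^2(\e)$ projection and a scaled trace inequality $\|w\|_{0,\partial\E}^2\lesssim \hE^{-1}\|w\|_{0,\E}^2 + \hE|w|_{1,\E}^2$ to convert the boundary residual into the $\h^{1/2}\|\g-\PieH\g\|_{0,\partial\Omega}$ form with a $\k$-uniform constant — routine, but it is where the half-power of $\h$ is generated and must be tracked carefully against the $\bnH$ term in~\eqref{eq:bH}. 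Everything else (the Strang-type splitting, the interpolation estimate, the continuity of $\anH$ from Proposition~\ref{prop:continuity:Helmholtz}) is standard and already available in the excerpt.
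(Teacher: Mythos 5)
Your proposal follows essentially the same route as the paper: the triangle inequality with the interpolant $\uIH$, the weak G\r arding inequality~\eqref{weak:Garding} applied to $\deltah=\uIH-\uhH$, absorption of the extra term $C_S\k^2\sum_\E\Vert(I-\PiHp)\deltah\Vert_{0,\E}^2$ via the projector bound~\eqref{continuity:projection:HelmholtzNEW} and $C_S(\h)\to0$, and then the Schatz-type duality argument under the smallness of $\h\k^2$ for the remaining terms (which the paper itself delegates to \cite[Theorem~4.3]{TVEM_Helmholtz}). The only novelty in the paper's proof relative to that reference is precisely the absorption of the third term, which you identify and handle correctly.
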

\begin{proof}
The proof follows along the same lines as that of~\cite[Theorem~4.3]{TVEM_Helmholtz}.
For this reason, we only present the modifications that are due to the validity of the weaker
G\r arding-type inequality~\eqref{weak:Garding}; see Remark~\ref{remark:old-Helmholtz-setting-stab}.

We first observe that, for all~$\uIH \in \VhH$,
\[
\Vert u - \uhH \Vert_{1,\k,\taun} \le \Vert u - \uIH \Vert_{1,\k,\taun}  +  \Vert \uIH - \uhH \Vert_{1,\k,\taun}.
\]
We focus on the second term on the right-hand side. For the sake of simplicity, write~$\deltah:= \uIH - \uhH$.
By applying the G\r arding-type inequality~\eqref{weak:Garding}, we deduce
\begin{equation} \label{an:estimate:abstract:Helmholtz}
  \begin{split}
& \Vert \deltah \Vert_{1,\k,\taun}^2 \le \RE (\anH(\deltah,\deltah) + \bnH(\deltah, \deltah)) + 2\k^2 \Vert \deltah \Vert^2_{0,\Omega} \\
&\quad \quad \quad \quad  + C_S \k^2 \sum_{\E\in\taun} \Vert (I-\PiHp) \deltah \Vert^2_{0,\E}  =: I + II + III.
\end{split}
\end{equation}
The terms~$I$ and~$II$ are dealt with exactly as
in~\cite{TVEM_Helmholtz}. As for the term~$III$, we proceed as
follows:
\[
\begin{split}
III 
& =  C_S \k^2 \sum_{\E\in\taun}\Vert (I-\PiHp) \deltah \Vert^2_{0,\E}  \le C_S  \Vert (I-\PiHp) \deltah \Vert^2_{1,\k,\taun} \\
& \le  2 C_S (\Vert \deltah \Vert^2_{1,\k,\taun}  +   \Vert \PiHp \deltah \Vert^2_{1,\k,\taun})  \overset{\eqref{continuity:projection:Helmholtz}}{\le} 2 C_S \left( \frac{1}{\beta ^2} +1 \right)  \Vert \deltah \Vert^2_{1,\k,\taun}.
\end{split}
\]
Recall that~$C_S = C_S(\h) \to 0$ as~$\h \to 0$. Then, for a sufficiently small~$\h$, we get
\[
III \le \frac{1}{4} \Vert \deltah \Vert^2_{1,\k,\taun}.
\]
Thus, we absorb the term~$III$ within the left-hand side of~\eqref{an:estimate:abstract:Helmholtz} yielding
\[
\frac{3}{4} \Vert \deltah \Vert_{1,\k,\taun}^2 \le I + II.
\]
The proof follows then along the same lines as that of~\cite[Theorem~4.3]{TVEM_Helmholtz} with~$\alpha_h = 3/4$.
Therefore, we omit further details.
\end{proof}

\begin{remark}
In view of Theorem~\ref{theorem:abstract-Helmholtz}, the approximation properties in Proposition~\ref{proposition:best-PW} and some algebra,
we deduce that the optimal $\h$-convergence is valid under suitable
regularity assumptions on the solution~$u$ to problem~\eqref{Helmholtz:weak} and on the boundary datum~$\g$.
We refer to~\cite[Theorem~4.4]{TVEM_Helmholtz} for a precise statement.
\end{remark}

\emph{Overall, the nonconforming Trefftz VEM for the Helmholtz problem is a modification of the standard nonconforming VEM, in the sense that it encodes certain properties of the solution to the problem within the defintion of the VE spaces.
 The resulting method has significantly fewer degrees of freedom than a standard VEM based on complete polynomial spaces, yet keeping the same convergence properties.
 Differently from the case of the Laplace problem, we need to resort to nonpolynomial underlying spaces (plane wave spaces, in our presentation).}

%%%%%%%%%%%%%%%%%%%%%%%%%%%%%%%%%%%%%%%%%%%%%%%%%%%%%%%%%%%%%%%%%%%%%%%%%%%
\section{Stability and dispersion analysis for the nonconforming Trefftz VEM for the Helmholtz equation} \label{section:NR-Helmholtz}
%%%%%%%%%%%%%%%%%%%%%%%%%%%%%%%%%%%%%%%%%%%%%%%%%%%%%%%%%%%%%%%%%%%%%%%%%%%
Here, we address the issue of the dispersion analysis for the nonconforming Trefftz VEM for the Helmholtz equation detailed in Section~\ref{section:Helmholtz}.

Amongst the difficulties in approximating time-harmonic wave propagation problems, we highlight the so-called \textit{pollution effect}~\cite{sauterpollution},
which describes the widening discrepancy between the best approximation error and the discretization error for large values of the wave number~$\k$.

This effect is directly linked to \textit{numerical dispersion}, representing the failure of the numerical method to reproduce the correct oscillating behaviour of the analytical solution.
More precisely, for a given wave number~$\k$, a continuous problem with plane wave solution is considered. Its numerical approximation delivers an approximate solution, which can be interpreted as a wave with a deviated wave number~$\kn$.
We can measure this mismatch of the continuous and discrete wave numbers $\k$ and~$\kn$ separately in terms of the real part and the imaginary part with the following interpretation.
The term~$|\Real{(k-\kn)}|$ represents the deviation (shift) of the phase (\textit{dispersion}), and the term $|\Imag{(k-\kn)}|=|\Imag{(\kn)}|$ refers to the damping of the amplitude (\textit{dissipation}) of the computed discrete solution.
Moreover, the difference $|k-\kn|$ measures the total amount of dispersion and dissipation and is sometimes referred to as \textit{total dispersion} or \textit{total error}.

We summarize the general strategy for a dispersion analysis in the following two steps:
\begin{enumerate}
\item Consider the discretization scheme of the numerical method applied to $-\Delta u -\k^2 u=0$ in~$\mathbb R^2$ using infinite meshes which are invariant under a discrete group of translations. Due to translation invariance, it is then possible to reduce the infinite mesh to a finite one.
\item Given a plane wave with wave number~$k$ travelling in a fixed direction, seek a so-called discrete \textit{Bloch wave solution},
which can be regarded as a generalization of the given continuous plane wave based on the underlying approximating spaces, and determine for which (discrete) wave number $\kn$ this Bloch wave solution actually solves the discrete variational formulation.
This procedure leads to small nonlinear eigenvalue problems, which need to be solved. 
\end{enumerate}

In the framework of standard conforming finite element methods (FEM) for the Helmholtz problem, a full dispersion analysis was done in~\cite{deraemaeker} for dimensions one to three.
Furthermore, in~\cite{sauterpollution} it was shown that the pollution effect can be avoided in 1D, but not in higher dimensions, and a generalized pollution-free FEM in 1D was constructed. Moreover, we highlight the work in~\cite{ihlenburg1995dispersion},
where a link between the results of the dispersion analysis and the numerical analysis was established for finite elements,
and the work in~\cite{ainsworth}, where quantitative, fully explicit estimates for the behaviour and decay rates of the dispersion error were derived in dependence on the order of the method relative to the mesh size and the wave number.
Also in the context of nonconforming methods, dispersion analyses have been performed for the discontinuous Galerkin (DG)-FEM~\cite{ainsworth2004dispersive,ainsworth2006dispersive},
the discontinuous Petrov-Galerkin (DPG) method~\cite{gopalakrishnan2014dispersive}, and the plane wave discontinuous Galerkin method (PWDG)~\cite{gittelson}.
Recently, a dispersion analysis for hybridized DG (HDG)-methods has been carried out in~\cite{gopalakrishnan2018dispersion}, including an explicit derivation of the wave number error for lowest order single face HDG methods.

Here, we numerically investigate the dispersion and dissipation
properties of the nonconforming Trefftz VEM (ncTVEM), and compare the results to those obtained in~\cite{gittelson} for PWDG, and to those for standard polynomial based FEM.

The remainder of the section is organized as follows.
In Section~\ref{subsection:abstract-dispersion}, we describe the abstract setting for the dispersion analysis.
Then, in Section~\ref{subsection:minimal-subspaces}, we specify the set of basis functions and the sesquilinear forms defining the numerical discretization schemes for the ncTVEM.
Finally, in Section~\ref{subsection:numerical-results}, we numerically study the dispersion and dissipation and we compare the results with those obtained with other methods.

%%%%%%%
\subsection{Abstract dispersion analysis} \label{subsection:abstract-dispersion}
%%%%%%%
In this section, we fix the abstract setting for the dispersion analysis employing the notation of~\cite{gittelson}.

To this purpose, in order to remove possible dependencies of the dispersion on the boundary conditions of the problem, we consider the Helmholtz problem~\eqref{Helmholtz:weak} on the unbounded domain $\Omega=\mathbb R^2$.
Let $\taun:=\{\E\}$ be a translation-invariant partition of $\Omega$ into polygons with mesh size $h:=\max_{\E \in \taun} \hE$, where $\hE:=\diam(\E)$,
i.e., there exists a set of elements $\widehat{\E}_1,\dots,\widehat{\E}_r$, $r \in \mathbb N$, such that the whole infinite mesh can be covered in a nonoverlapping way by shifts of the ``reference'' patch $\widehat{\E}:=\bigcup_{j=1}^r \widehat{\E}_j$.
In other words, this assumption implies the existence of translation vectors $\xibold_1, \xibold_2 \in \R^2$, such that every element $\E \in \taun$ can be written as a linear combination with coefficients in $\N_0$ of one of the reference polygons $\widehat{\E}_\ell$, $\ell=1,\dots,r$.
Some examples for translation-invariant meshes are shown in Figure~\ref{fig:meshes}. Moreover, we denote by $\EE$ the set of edges belonging to $\E$.

\begin{figure}[h]
\begin{center}
\begin{minipage}{0.28\textwidth} 
\centering
\includegraphics[width=\textwidth]{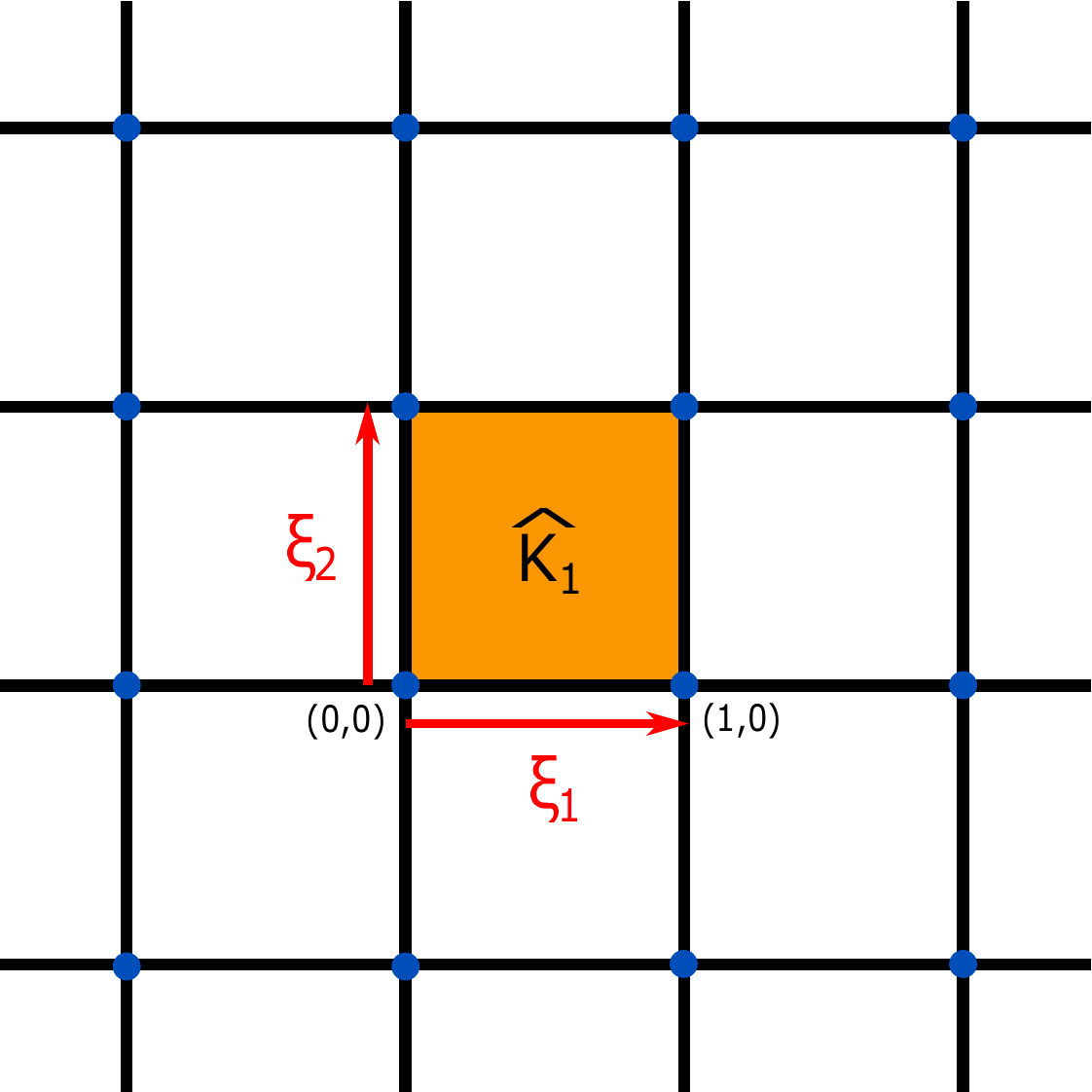}
\end{minipage}
\hfill
\begin{minipage}{0.35\textwidth}
\centering
\includegraphics[width=\textwidth]{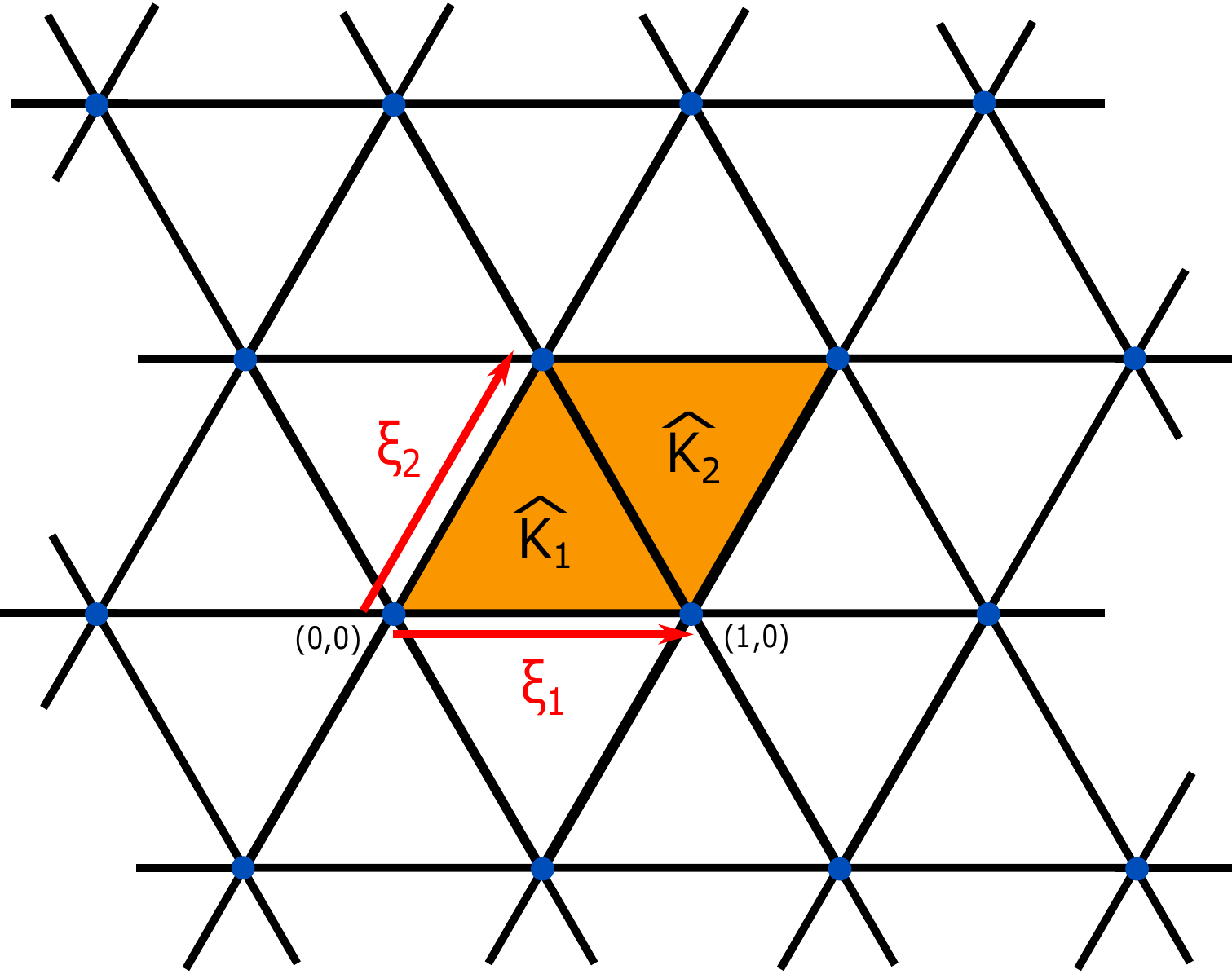}
\end{minipage}
\hfill
\begin{minipage}{0.28\textwidth}
\centering
\includegraphics[width=\textwidth]{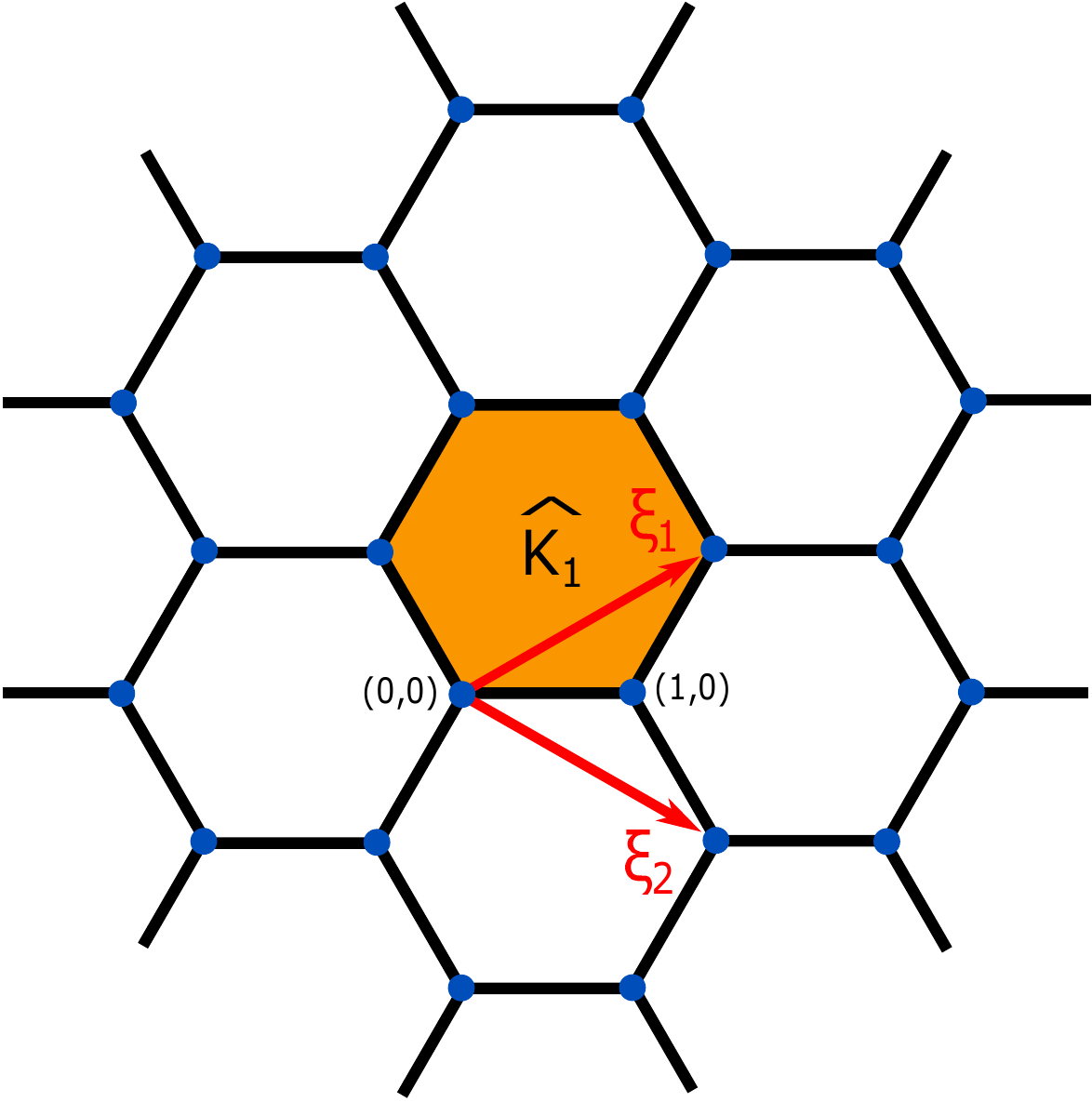}
\end{minipage}
\end{center}
\caption{Examples of translation-invariant meshes with the corresponding translation vectors $\xibold_1$ and $\xibold_2$: regular Cartesian mesh, triangular mesh, and hexagonal mesh, from \textit{left} to \textit{right}.}
\label{fig:meshes} 
\end{figure}

Let $u(\textbf{x})=e^{\im k \textbf{d} \cdot \textbf{x}}$, $\textbf{d} \in \R^2$ with $|\dbold|=1$ be a plane wave with wave number $k$ and traveling in direction~$\dbold$.
We denote by~$\Vn$ the global approximation space resulting from the discretization of~\eqref{Helmholtz:weak} using a Galerkin based numerical method, and by~$\Vnh \subset \Vn$ a minimal subspace generating~$\Vn$ by translations with
\begin{equation} \label{translation-xi_n}
\xibold_{\n}:=n_1 \xibold_1+n_2 \xibold_2, \quad \n=(n_1,n_2) \in \Z^2. 
\end{equation}
More precisely, depending on the structure of the method, $\Vnh$ is determined as follows. 
\begin{enumerate}
\item \textit{Vertex-related} basis functions: In this case, $\Vnh$ is the span of all basis functions related to a minimal set of vertices $\{\nu_i\}_{i=1}^{\lambdazero}$, $\lambdazero \in \N$,
such that all the other mesh vertices are obtained by translations with $\xibold_n$ of the form~\eqref{translation-xi_n}. An example is the FEM.
\item \textit{Edge-related} basis functions: Similarly as above, the space $\Vnh$ is in this case the span of all basis functions related to a minimal set of edges $\{\eta_i\}_{i=1}^{\lambdaone}$, $\lambdaone \in \N$, such that all the other edges of the mesh are obtained by translations with $\xibold_n$ of the form~\eqref{translation-xi_n}. This is, for instance, the case of the ncTVEM~\cite{TVEM_Helmholtz, TVEM_Helmholtz_num}.
\item \textit{Element-related} basis functions: Here, the space $\Vnh$ is simply given as the span of all basis functions related to a minimal set of elements $\{\sigma_i\}_{i=1}^{\lambdatwo}$, $\lambdatwo \in \N$, such that all other elements of the mesh are obtained by a translation with a vector $\xibold_n$ of the form~\eqref{translation-xi_n}. One representative of this category is the PWDG~\cite{GHP_PWDGFEM_hversion,TDGPW_pversion}.
\end{enumerate}
In the following, we will refer to these minimal sets of vertices $\{\nu_i\}_{i=1}^{\lambdazero}$, edges $\{\eta_i\}_{i=1}^{\lambdaone}$, and elements $\{\sigma_i\}_{i=1}^{\lambdatwo}$ as \textit{fundamental sets} of vertices, edges, and elements, respectively.

As a direct consequence, every~$\vn \in \Vn$ can be written as
\begin{equation*}
\vn(\x)=\sum_{\n \in \Z^2} \vnh(\x-\xibold_{\n}), \quad \vnh \in \Vnh.
\end{equation*}
Next, we define the discrete \textit{Bloch wave} with wave number~$\kn$ and traveling in direction~$\dbold$ by
\begin{equation} \label{Bloch_wave}
\un(\x)= \sum_{\n \in \Z^2} e^{\im \kn \dbold \cdot \xibold_{\n}} \unh(\x-\xibold_{\n}),
\end{equation} 
where $\unh \in \Vnh$, and $\kn \in \C$ with $\Real(\kn)>0$. Note that, since $\unh \in \Vnh$, the infinite sum in~\eqref{Bloch_wave} is in fact finite. Furthermore, given $\textup{\textbf{d}} \in \R^2$ with $|\textup{\textbf{d}}|=1$, the Bloch wave $\un$ in~\eqref{Bloch_wave} satisfies
\begin{equation*}
\un(\x+\xibold_{\lbold})=e^{\im \kn \dbold \cdot \xibold_{\lbold}} \un(\x),
\end{equation*}
for all $\lbold \in \Z^2$. This property follows directly by using the definition of the Bloch wave:
\begin{equation*}
\begin{split}
\un(\x+\xibold_{\lbold})&=
\sum_{\n \in \Z^2} e^{\im \kn \dbold \cdot \xibold_{\n}} \unh(\x+\xibold_{\lbold}-\xibold_{\n})=
\sum_{n \in \Z^2} e^{\im \kn \dbold \cdot \xibold_{\n}} \unh(\x-\xibold_{\n-\lbold}) \\
&=e^{\im \kn \dbold \cdot \xibold_{\lbold}} \sum_{\mbold \in \Z^2} e^{\im \kn \dbold \cdot \xibold_{\mbold}} \unh(\x-\xibold_{\mbold})
=e^{\im \kn \dbold \cdot \xibold_{\lbold}} \un(\x).
\end{split}  
\end{equation*}
Therefore, Bloch waves can be regarded as discrete counterparts, based on the approximation spaces, of continuous plane waves. 

We introduce the global (continuous) sesquilinear form
{\begin{equation} \label{definition aK}
a(u,v)\!:=\!\!\sum_{\E \in \taun}\!\! \aE(u,v) \!:=\!\!\sum_{\E \in \taun} \!\!\bigg[\int_{\E} \!\!\nabla u \cdot \overline{\nabla v} - \k^2 \!\!\int_{\E}\!\! u \overline{v} \, \bigg]  \ \, \forall u,v \in H^1(\R^2),
\end{equation}
and we denote by $\an(\cdot,\cdot)$ the global discrete sesquilinear form defining the numerical method under consideration.
In Section~\ref{subsection:minimal-subspaces} below, we will specify~$\Vnh$ and~$\an(\cdot,\cdot)$ for the ncTVEM  and the PWDG. 

Next, we define the discrete wave number~$\kn \in \C$ as follows.
\begin{defn} \label{def discrete wave number}
Given~$\k>0$ and $\textup{\textbf{d}} \in \R^2$ with $|\textup{\textbf{d}}|=1$, the \textit{discrete wave number} $\kn \in \C$ is the number with minimal~$|k-\kn|$, for which a discrete Bloch wave $\un$ of the form~\eqref{Bloch_wave} is a solution to the discrete problem
\begin{equation} \label{var_form}
\an(\un,\vnh)=0 \quad \forall \vnh \in \Vnh.
\end{equation}
\end{defn}
\noindent
Due to the scaling invariance of the mesh, we can assume that $\h=1$.
The wave number~$k$ on a mesh with $h=1$ corresponds to the wave number $k_0=\frac{k}{h_0}$ on a mesh with mesh size $h_0$.

Having this, the general procedure in the dispersion analysis now consists in finding those discrete wave numbers~$\kn \in \C$ and coefficients~$\unh \in \Vnh$, for which a Bloch wave solution of the form~\eqref{Bloch_wave} satisfies~\eqref{var_form},
and to measure the deviation of~$\kn$ from~$\k$ afterwards. This strategy results in solving small nonlinear eigenvalue problems. In fact, by plugging the Bloch wave ansatz~\eqref{Bloch_wave} into~\eqref{var_form} and using the sesquilinearity of~$\an(\cdot,\cdot)$, we obtain
\begin{equation} \label{relation 1} 
\sum_{\n \in \Z^2} e^{\im \kn \dbold \cdot \xibold_{\n}} \an(\unh(\cdot-\xibold_{\n}),\vnh)=0 \quad \forall \vnh \in \Vnh.
\end{equation}
Let $\{\chih_s \}_{s=1}^{\Xi} \subset \Vnh$ be a set of basis functions for the space $\Vnh$ that are related to fundamental elements, vertices, or edges, depending on the method. Then, we can expand $\unh$ in terms of this basis as
\begin{equation*}
\unh = \sum_{t=1}^{\Xi} u_t \chih_t.
\end{equation*} 
Plugging this ansatz into~\eqref{relation 1}, testing with $\chih_s$, $s=1,\dots,\Xi$, and interchanging the sums (this can be done since the infinite sum over $\n$ is in fact finite) yields
\begin{equation} \label{NEP:basis:fcts}
\sum_{t=1}^{\Xi} u_t \left(\sum_{\n \in \Z^2} e^{\im \kn \dbold \cdot \xibold_{\n}} \an(\chih_t (\cdot-\xibold_{\n}),\chih_s) \right)=0 \quad \forall s=1,\dots,\Xi,
\end{equation}
which can be represented as
\begin{equation} \label{relation 2}
\sum_{t=1}^{\Xi} \boldsymbol{T}_{s,t}(\kn) u_t = 0 \quad \forall s=1,\dots,\Xi,
\end{equation}
with
\begin{equation} \label{matrix T}
\boldsymbol{T}_{s,t}(\kn):=\sum_{\n \in \Z^2} e^{\im \kn \dbold \cdot \xibold_{\n}} \an(\chih_t (\cdot-\xibold_{\n}),\chih_s).
\end{equation}
The matrix problem corresponding to~\eqref{relation 2} has the form 
\begin{equation} \label{NEP}
\boldsymbol{T}(\kn) \textbf{u} = \boldsymbol{0},
\end{equation}
where $\boldsymbol{T}: \, \C \to \C^{\Xi \times \Xi}$ is defined via~\eqref{matrix T}, and $\boldsymbol{u}=(u_1,\dots,u_{\Xi})^T \in \C^{\Xi}$.
We highlight that $\boldsymbol{T}$ is a holomorphic map and~\eqref{NEP} is a small nonlinear eigenvalue problem, which can be solved using, e.g., an iterative method as done in~\cite{gittelson},
or a direct method based on a rational interpolation procedure~\cite{xiao2017solving} or on a contour integral approach~\cite{beyn2012integral, asakura2009numerical}.
For the numerical experiments presented in Section~\ref{subsection:numerical-results}, we will make use of the latter, which we will denote by \textit{contour integral method} (CIM) in the sequel.
Due to the use of plane wave related basis functions, deriving an exact analytical solution to~\eqref{NEP} is not possible even for the lowest order case.  

%%%%%%%
\subsection{Minimal generating subspaces} \label{subsection:minimal-subspaces}
%%%%%%%
In this section, we specify the minimal generating subspaces~$\Vnh$, the corresponding sets of basis functions $\{\chih_s \}_{s=1}^{\Xi}$, and the sequilinear forms $\an(\cdot,\cdot)$ for the ncTVEM and the PWDG~\cite{GHP_PWDGFEM_hversion,TDGPW_pversion}.
The basis functions for these two methods are edge-related and element-related, respectively.
In Figures~\ref{fig:dispersion-squares}-\ref{fig:dispersion-hexagons}, the stencils related to the fundamental sets of vertices, edges, and elements are depicted for these three methods and the meshes in Figure~\ref{fig:meshes}.

Before doing that, we recall some notation from Section~\ref{section:Helmholtz}. Let $\{\djj\}_{j=1}^p$, $p=2q+1$, $q \in \N$, be a set of equidistributed plane wave directions. We denote by
\begin{equation} \label{pw}
w_j(\x):=e^{\im k \djj \cdot \x}, \quad j=1,\dots,p,
\end{equation}
the plane wave with wave number~$\k$ and traveling along the direction~$\djj$. Furthermore, for every~$\E \in \taun$, we set $w_j^{\E}:={w_j}_{|_{\E}}$, and we introduce the bulk place waves space
\[
\PWE:=\lin\{ \wjE, \, j=1,\dots,p \}.
\]

\paragraph*{The case of ncTVEM.}
Let now $\{ \eta_i \}_{i=1}^{\lambdaone}$ be a fundamental set of edges.
Then, the set of basis functions $\{\chih_s^{(1)} \}_{s=1}^{\Xi}$ spanning the minimal generating subspace $\Vnh^{(1)}$ is given by the union of the canonical basis functions related to $\{ \eta_i \}_{i=1}^{\lambdaone}$.
More precisely, for $s \leftrightarrow (i,j)$, $i \in \{1,\dots,\lambdaone\}$ and $j \in \Jcal_{\eta_i}$, i.e. we identify $s$ with the edge index~$i$ and the index~$j$ associated with the $j$-th plane wave basis function on this edge as above,
\begin{equation*}
\chih_{s}^{(1)}=\chih_{(i,j)}^{(1)}:=\Psi_{(\eta_i,j)},
\end{equation*}
where $\Psi_{(\eta_i,j)}$ is defined elementwise as follows.
If~$\E \in \taun$ is an element abutting the edge~$\eta_i$, then~${\Psi_{(\eta_i,j)}}_{|_{\eta_i}}$ coincides with the local canonical basis function associated with the (global) edge~$\eta_i$ and the $j$-th orthogonalized edge plane wave basis function;
otherwise $\Psi_{(\eta_i,j)}$ is zero. Clearly, $\Xi=\sum_{i=1}^{\lambdaone} \widetilde{p}_{\eta_i}$.

As for the sesquilinear form~$\an^{(1)}(\cdot,\cdot)$, it coincides with that in~\eqref{discrete-bf-Helmholtz}, where the projector $\Pip$ is defined in~\eqref{projector:L2-edge-Helmholtz}.

\paragraph*{The case of PWDG.}
For PWDG, we refer to~\cite{gittelson} for a full dispersion analysis. For the sake of completeness, we shortly recall here the definitions of the minimal generating subspace and the sesquilinear form adapted to our setting.

The global approximation space $\Vn^{(2)}$ is given by
\begin{equation*}
\Vn^{(2)}:=\{\vn \in L^2(\R^2):\, {\vn}_{|_K} \in \PWE \quad \forall \E \in \taun\}.
\end{equation*}
Moreover, the global sesquilinear form $\an^{(2)}(\cdot,\cdot)$ is defined by
\begin{equation} \label{an-PWDG}
\begin{split}
\an^{(2)}(\un,\vn):=&\sum_{\E \in \taun} \aE(\un,\vn) - \int_{\En} \llbracket \un \rrbracket \cdot \{\!\!\{\overline{\nabla_n \vn}\}\!\!\} \\
&- \frac{\beta}{\im k} \int_{\En} \llbracket \nabla_n \un \rrbracket \, \llbracket \overline{\nabla_n \vn} \rrbracket  
- \int_{\En} \{\!\!\{\nabla_n \un\}\!\!\} \cdot \llbracket \overline{\vn} \rrbracket \\
&+ \im k \alpha \int_{\En} \llbracket \un \rrbracket \cdot \llbracket \overline{\vn} \rrbracket \qquad\qquad \forall \un, \vn \in \Vn^{(2)}.
\end{split}
\end{equation}
where $\aE(\cdot,\cdot)$ is given in~\eqref{definition aK}, $\En$ is the mesh skeleton, $\alpha, \beta>0$ are the flux parameters, and $\nabla_n$ is the broken gradient.
For~$v=\un$ or $\vn$, $\llbracket v \rrbracket$ is the standard trace jump as defined in~\eqref{jump}, and, on a given edge~$e$, denoting by~$\E^-$ and~$\E^+$ its adjacent elements, 
\begin{equation*} 
\{\!\!\{\nabla_n v \}\!\!\}:=\frac{1}{2} \left(\nabla  v_{|_{K^+}}+\nabla v_{|_{K^-}}\right) ,\quad \llbracket \nabla_n v \rrbracket:=\nabla   {v}_{|_{K^+}}\cdot\n_{{K^+}}+\nabla{v}_{|_{K^-}}\cdot\n_{{K^-}}
\end{equation*}
are the trace average and normal jump, respectively, of~$\nabla_n v$.
Recall that~$\taun$ is a partition of $\Omega=\R^2$, thus all edges in $\En$ are shared by two elements.

Let now $\{\sigma_i\}_{i=1}^{\lambdatwo}$ be a fundamental set of elements.
Then, the basis functions $\{\chih_s^{(2)} \}_{s=1}^{\Xi}$ are given by $\{w_j^{\sigma_i}\}_{i=1,\dots,\lambdatwo, j=1\dots,p}$, where $s \leftrightarrow (i,j)$, i.e. $s$ is identified with the element index $i$ and the plane wave direction index $j$, and $\Xi=\lambdatwo p$.
As mentioned above, the minimal generating subspace $\Vnh^{(2)} \subset \Vn^{(2)}$ is simply the span of the basis functions $\{\chih_s^{(2)} \}_{s=1}^{\Xi}$, and the sesquilinear form $\an^{(2)}(\cdot,\cdot)$ is given in~\eqref{an-PWDG}.

\paragraph*{Overview of the stencils generating the minimal subspaces.}
In Figures~\ref{fig:dispersion-squares}-\ref{fig:dispersion-hexagons}, we illustrate the stencils of the basis functions for the ncTVEM and the PWDG, employing the meshes made of squares, triangles, and hexagons, respectively, depicted in Figure~\ref{fig:meshes}.
The fundamental sets of vertices, edges, and elements are displayed in dark-blue, and the translation vectors~$\xibold_1$ and~$\xibold_2$ in red.
Furthermore, the supports of the basis functions spanning the minimal generating subspaces are coloured in light-blue for the ncTVEM.
Due to the locality of the basis functions, only those associated with the vertices, edges, and elements displayed in dark-blue and dark-yellow contribute to the sum~\eqref{NEP:basis:fcts}.
Integration only has to be performed over the elements $\E^\zeta$ and the adjacent edges.

\begin{figure}[h]
\begin{center}
\begin{minipage}{0.3\textwidth}
\centering
\includegraphics[width=\textwidth]{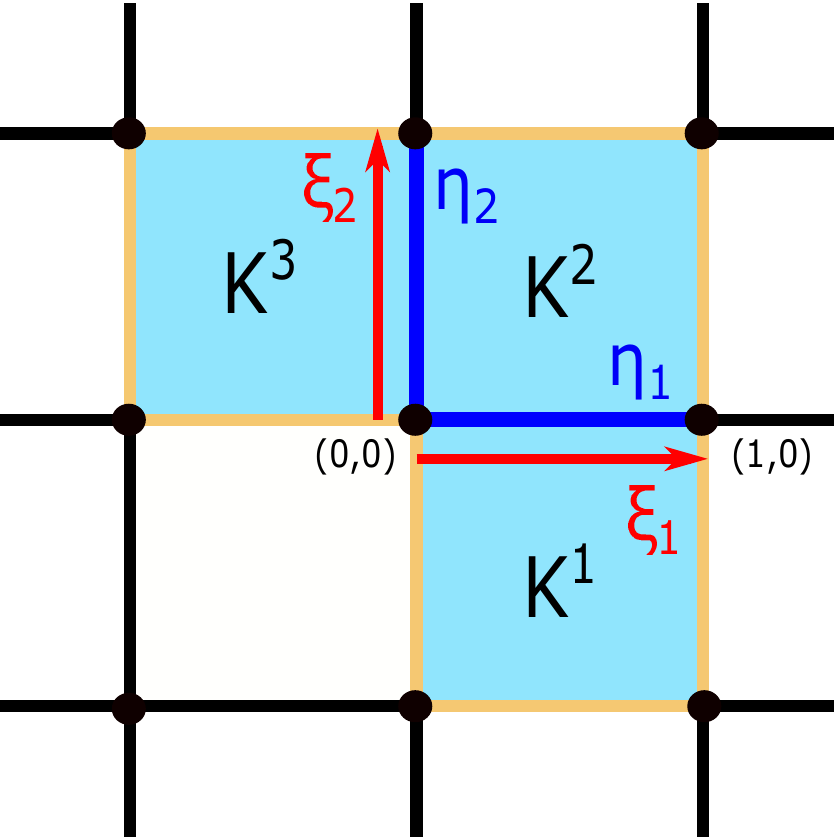}
\end{minipage}
\quad\quad\quad
\begin{minipage}{0.3\textwidth} 
\centering
\includegraphics[width=\textwidth]{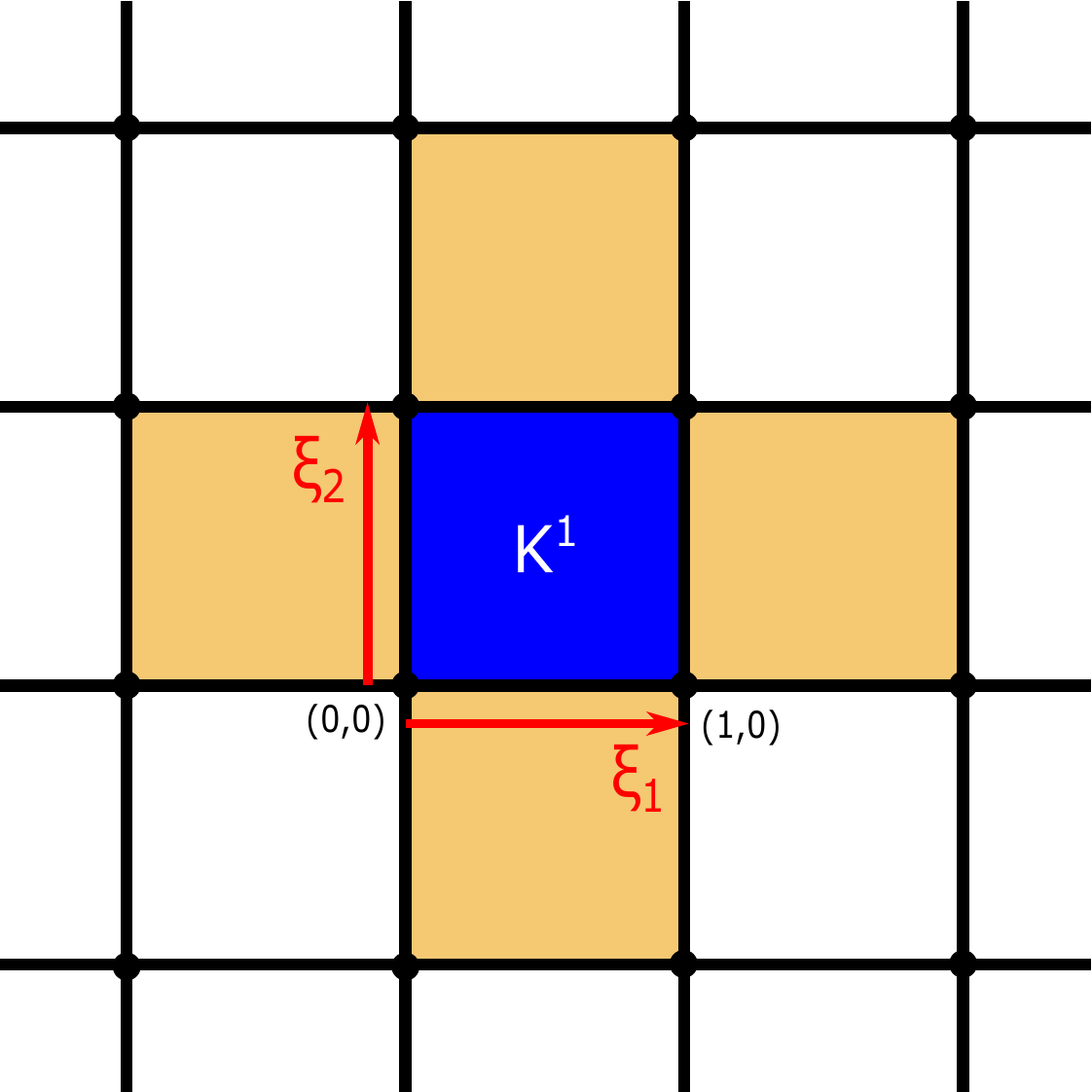}
\end{minipage}
\end{center}
\caption{Stencils of the basis functions related to the fundamental sets of edges (ncTVEM) and elements (PWDG), respectively, from \textit{left} to \textit{right}, when employing the meshes made of squares in Figure~\ref{fig:meshes}.}
\label{fig:dispersion-squares} 
\end{figure}

\begin{figure}[h]
\begin{center}
\begin{minipage}{0.3\textwidth}
\centering
\includegraphics[width=\textwidth]{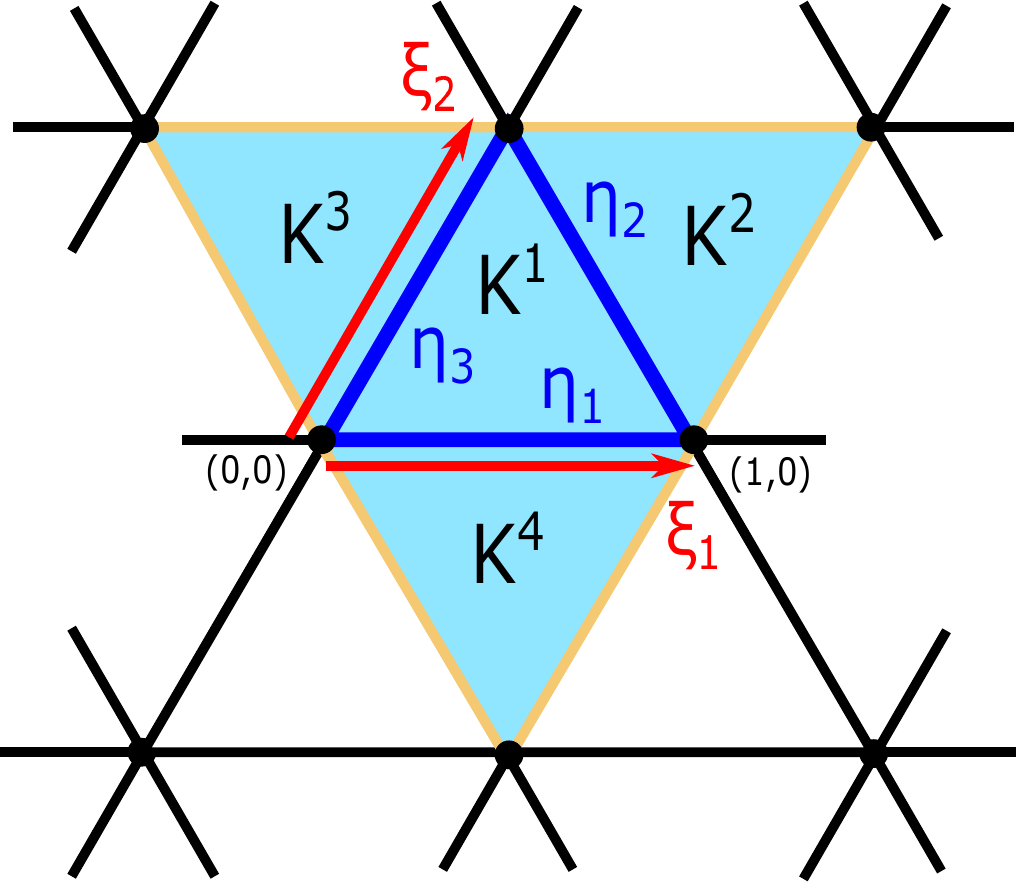}
\end{minipage}
\quad\quad\quad
\begin{minipage}{0.3\textwidth} 
\centering
\includegraphics[width=\textwidth]{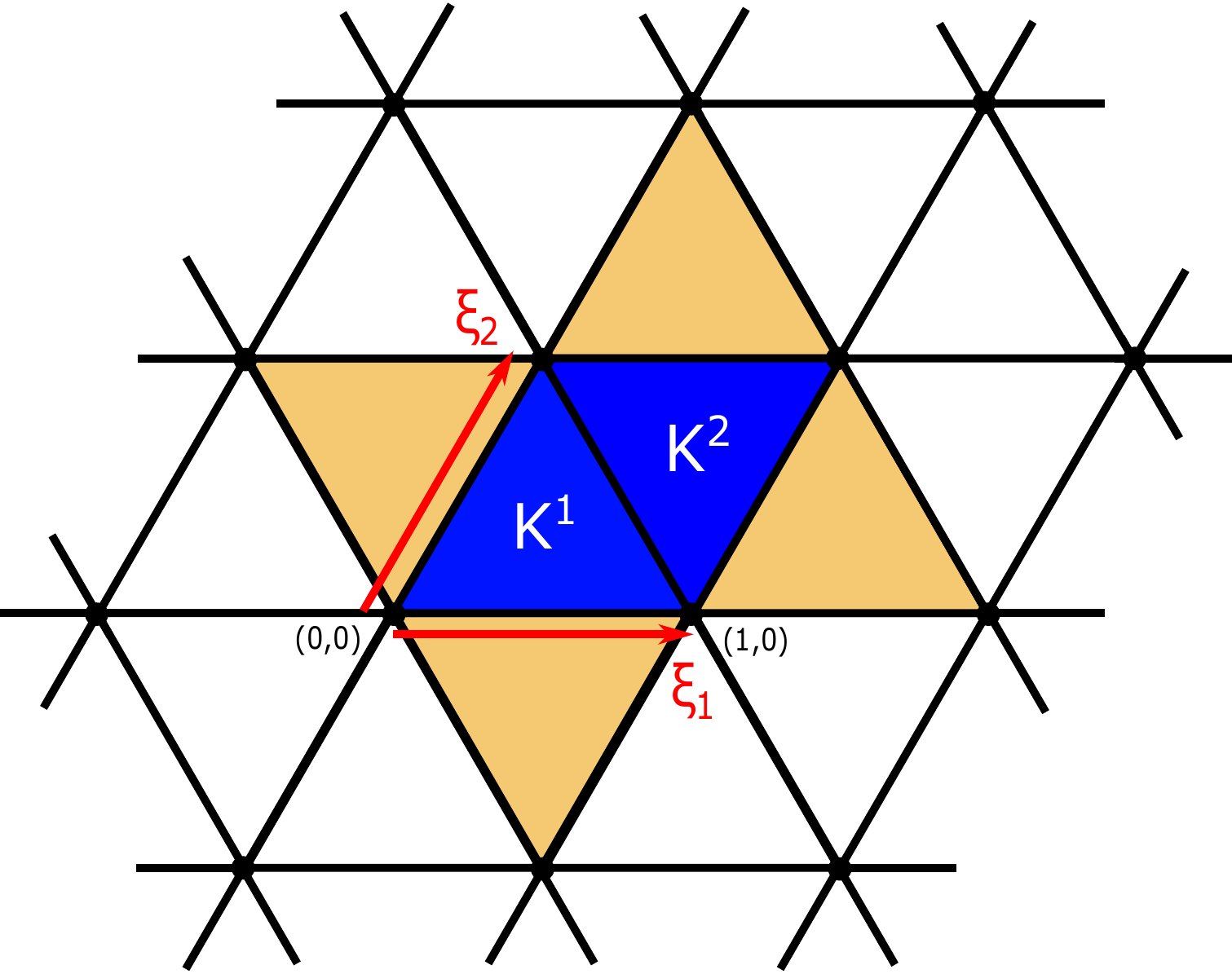}
\end{minipage}
\end{center}
\caption{Stencils of the basis functions related to the fundamental sets of edges (ncTVEM) and elements (PWDG), respectively, from \textit{left} to \textit{right}, when employing the meshes made of triangles in Figure~\ref{fig:meshes}.}
\label{fig:dispersion-triangles} 
\end{figure}

\begin{figure}[h]
\begin{center}
\begin{minipage}{0.32\textwidth}
\centering
\includegraphics[width=\textwidth]{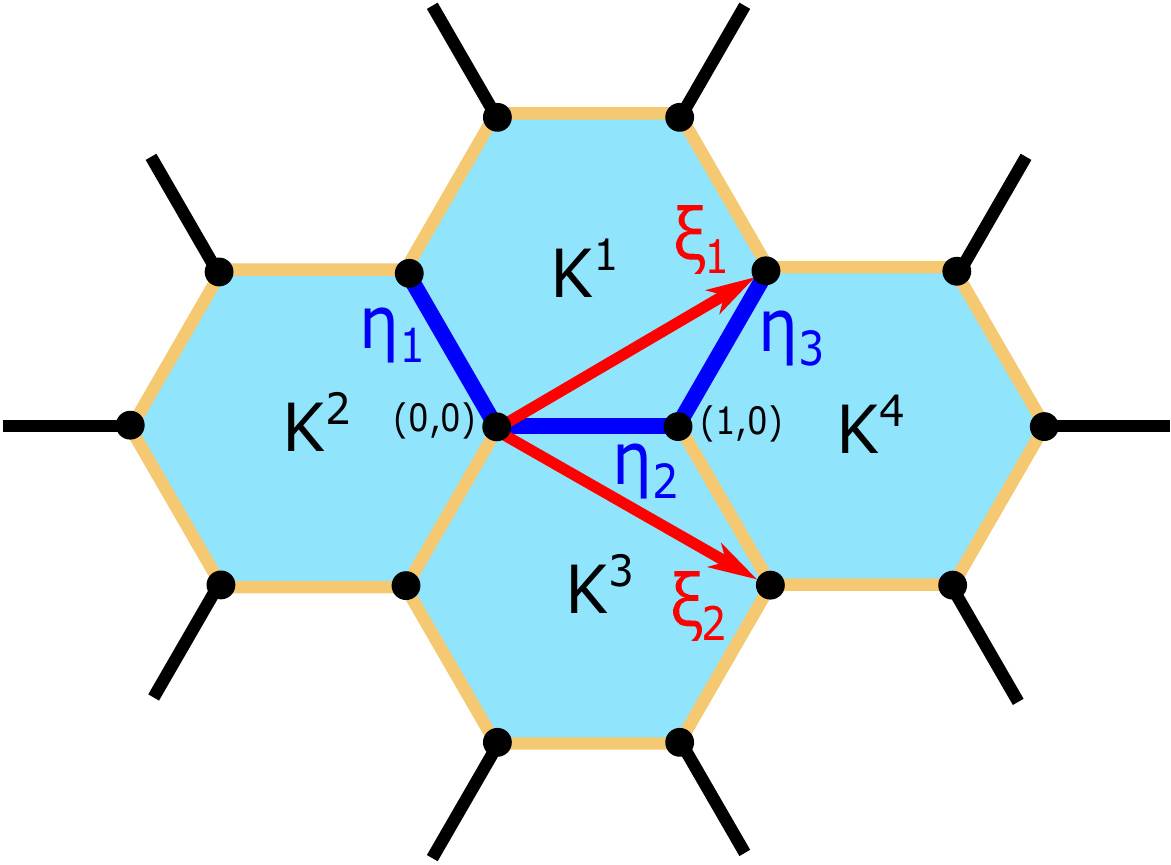}
\end{minipage}
\quad\quad\quad
\begin{minipage}{0.3\textwidth} 
\centering
\includegraphics[width=\textwidth]{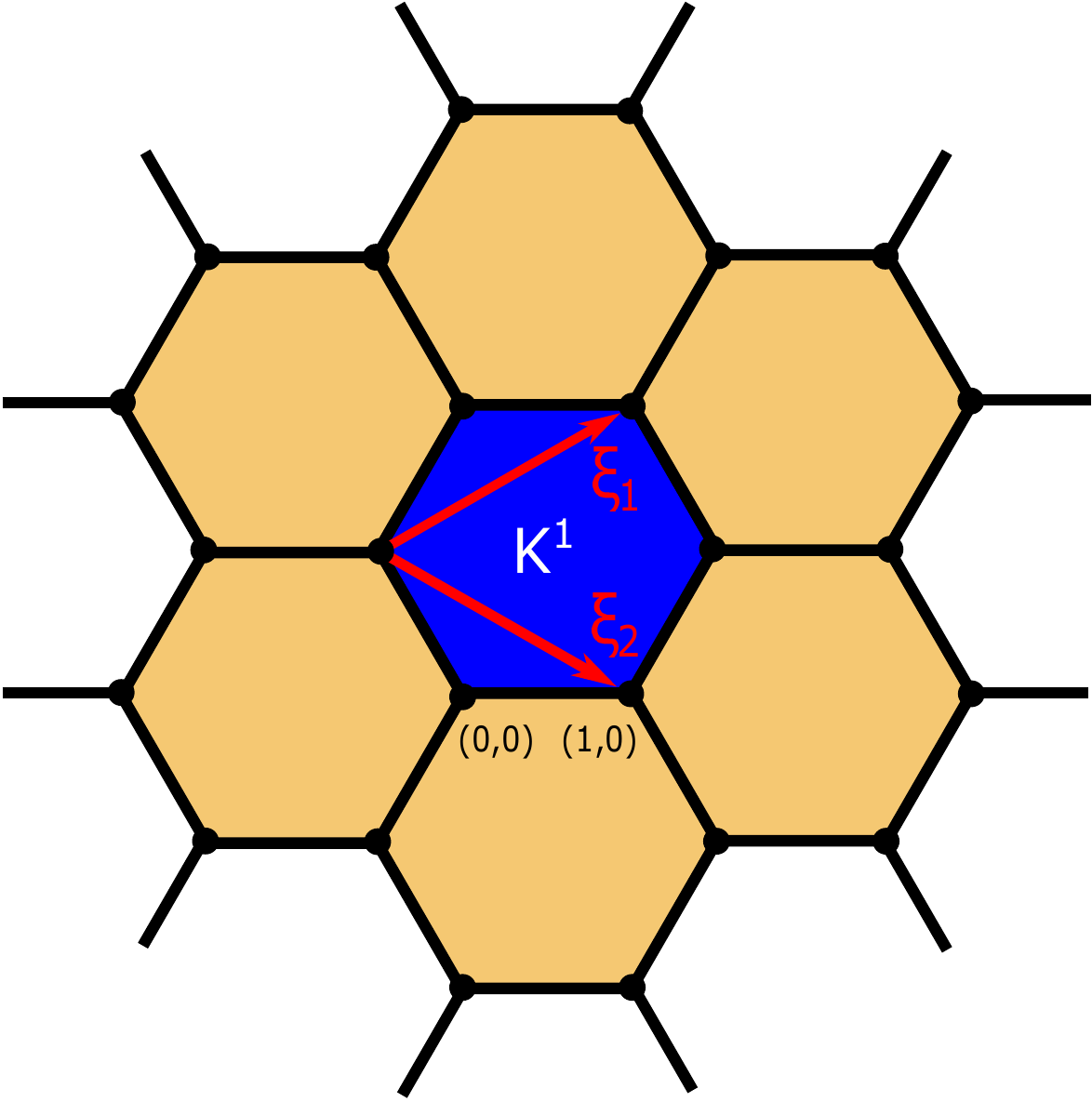}
\end{minipage}
\end{center}
\caption{Stencils of the basis functions related to the fundamental sets of edges (ncTVEM) and elements (PWDG), respectively, from \textit{left} to \textit{right}, when employing the meshes made of hexagons in Figure~\ref{fig:meshes}.}
\label{fig:dispersion-hexagons} 
\end{figure}

%%%%%%%
\subsection{Numerical results} \label{subsection:numerical-results}
%%%%%%%
In this section, after fixing some parameters for the different methods in Section~\ref{section:NR-Helmholtz} and specifying the quantities to be compared,
we present a series of numerical tests using the meshes portrayed in Figure~\ref{fig:meshes}.
Firstly, in Section~\ref{subsubsection:dependence_Bloch_angle}, we investigate the qualitative behaviour of dispersion and dissipation depending on the Bloch wave angle~$\theta$ in Definition~\ref{Bloch_wave}.
Then, in Section~\ref{subsubsection:disp_vs_q}, we compare the dispersion and dissipation errors against the effective plane wave degree~$q$ and against the dimensions of the minimal generating subspaces.
Finally, in Section~\ref{subsubsection:disp_vs_k}, the dependence of the errors on the wave number is studied.

\paragraph*{Choice of the parameters in PWDG and the stabilizations in the ncTVEM.}
We use the choice of the flux parameters of the ultra weak variational formulation (UWVF), i.e. $\alpha=\beta=1/2$, in PWDG, and we employ the stabilization terms suggested in~\cite{TVEM_Helmholtz, TVEM_Helmholtz_num} for the ncTVEM.

As for the ncTVEM, we employ a modified D-recipe stabilization detailed in~\cite[Section~4]{TVEM_Helmholtz_num}.
More precisely, for all~$\E \in \taun$, consider the set of local canonical basis function~$\{\varphi_i^H\}$ of~$\VhHE$.
For all~$\varphi_i^H$ and~$\varphi_j^H$ basis functions, we consider
\[
\SEH(\varphi_i^H, \varphi_j^H) = \aE(\PiHp \varphi_i^H, \PiHp \varphi_j^H).
\]
An essential element in the implementation of the method is the or\-tho\-go\-na\-li\-za\-tion-and-filtering process detailed in~\cite[Algorithm~2]{TVEM_Helmholtz_num}.
The basic idea is that the plane waves on each edge $\e$ used in the definition of the degrees of freedom are first orthogonalized in $L^2(\e)$.
Then, all combinations of plane waves that are close to be linearly dependent to others are eliminated.
This is explained in Algorithm~\ref{algorithm:ortho}.

\begin{algorithm}[h]
\caption{}
\label{algorithm:ortho}
Let $\sigma>0$ be a given threshold.
\begin{enumerate}
\item For all edges~$\e\in{\mathcal E}_\h$:
\begin{enumerate}
\item Assemble the matrix~${\mathbf G}^\e$ associated with the the
  $L^2(\e)$ inner product:
\[
({\mathbf G}^\e)_{j,\ell}= (\welle,w_j^\e)_{0,e} \quad \forall
j,\ell=1,\dots, \NPWe,
\]
where we recall that $\NPWe := \dim(\PWpe)$, and
$\{w_r^\e\}_{r=1}^{\NPWe}$ denotes the original basis of $\PWpe$.
\item Compute the eigenvalue/eigenvector decomposition of~${\mathbf G}^\e$:
\begin{equation*}
{\mathbf G}^\e {\mathbf Q}^\e =   {\mathbf Q}^\e {\mathbf \Lambda}^\e,
\end{equation*}
where~${\mathbf Q}^\e$ is a matrix whose columns are
right-eigenvectors of ${\mathbf G}^\e$, and~${\mathbf \Lambda}^\e$ is a diagonal matrix of the corresponding eigenvalues. 
\item Remove the columns of~${\mathbf Q}^\e $ corresponding to the eigenvalues with absolute value smaller than~$\sigma$.
Denote by $ \widehat \NPWe\le \NPWe $ the number of remaining columns, and re-label them by~$1,\dots, \widehat \NPWe$.
\item For $\ell=1,\dots, \widehat \NPWe$, set
\begin{equation*} \label{new_basis_fcts}
\widehat{w}_\ell^\e:=\sum_{r=1}^{\NPWe} {\mathbf Q}^\e_{r,\ell} \, w_r^\e.
\end{equation*}
The new, \emph{filtered basis} $\{\widehat{w}_\ell^\e\}_{\ell=1}^{\widehat \NPWe}$ is $L^2(\e)$ orthogonal.
\end{enumerate}
\item For all $\E\in\taun$, the new basis of $\VhHE$ is built by using the filtered basis
  $\{\widehat{w}_\ell^\e\}_{\ell=1}^{\widehat \NPWe}$ instead of
  the original basis $\{w_r^\e\}_{r=1}^{\NPWe}$ for each $\e\in\EE$.
\end{enumerate}
\end{algorithm}

A consequence of this approach is that, after few steps of both the $\h$- and $\p$- versions of the method,
the accuracy improves with an extremely slow growth of the number of degrees of freedom.
This results in the so called \textit{cliff-edge effect}, which was observed in~\cite{TVEM_Helmholtz_num, fluidfluid_ncVEM};
we shall exhibit such fast decay of the error notably for the $\p$-version in Figures~\ref{fig:disp_diss_q_sq}, \ref{fig:disp_diss_q_tri}, \ref{fig:rel_dispersion_nr_dofs}, and~\ref{fig:disp_diss_q_sq FEM}.

\paragraph*{Numerical quantities.}
Given a wave number~$\k>0$ and~$\kn$ the discrete wave number in Definition~\ref{def discrete wave number}, we will study the following quantities:
\begin{itemize}
\item the \textit{dispersion error} $|\Real{(\k-\kn)}|$, which describes the difference of the propagation velocities of the continuous and discrete plane wave solutions;
\item the \textit{dissipation error} $|\Imag{(\kn)}|=|\Imag{(\k-\kn)}|$, which represents the difference of the amplitudes (damping) of the continuous and discrete plane wave solution;
\item the \textit{total error} $|\k-\kn|$, which measures the total deviation of the continuous and discrete wave numbers.
\end{itemize}

\subsubsection{Dependence of dispersion and dissipation on the Bloch wave angle} \label{subsubsection:dependence_Bloch_angle}
In this section, we study dispersion and dissipation of the different methods in dependence on the angle~$\theta$ of the direction~$\dir$ in the definition of the Bloch wave in~\eqref{Bloch_wave}.
Importantly, we are interested in a qualitative comparison of the methods. A quantitative comparison should be performed in terms of the dimensions of the minimal generating subspaces instead of the effective degrees, and is discussed in Section~\ref{subsubsection:disp_vs_q} below.

To this purpose, in Figures~\ref{fig:dispersion-angle-sq}-\ref{fig:dispersion-angle-hex}, the numerical quantities $|\Real{(k-\kn)}|$ and $|\Imag{(\kn)}|$ are plotted against~$\theta$ for the meshes made of squares, triangles, and hexagons, respectively, shown in Figure~\ref{fig:meshes}.
We took~$\k=3$ and~$q=7$ for all those types of meshes (Figures~\ref{fig:dispersion-angle-sq}-\ref{fig:dispersion-angle-hex}, left).
Moreover, for~$\k=10$, we chose~$q=10$ for the squares (Figure~\ref{fig:dispersion-angle-sq}, right) and the triangles (Figure~\ref{fig:dispersion-angle-tri}, right), and~$q=13$ for the hexagons (Figure~\ref{fig:dispersion-angle-hex}, right).
We remark that the latter choice for~$q$ on the meshes made of hexagons is purely for demonstration purposes, in order to obtain a reasonable range for the errors, where one can see the behaviour more clearly.
Moreover, the wave number~$k$ here (mesh size~$h=1$) corresponds to the wave number~$k_0=\frac{k}{h_0}$ on a mesh with mesh size~$h_0$.

\begin{figure}[h]
\begin{center}
\begin{minipage}{0.475\textwidth} 
\centering
\includegraphics[width=\textwidth]{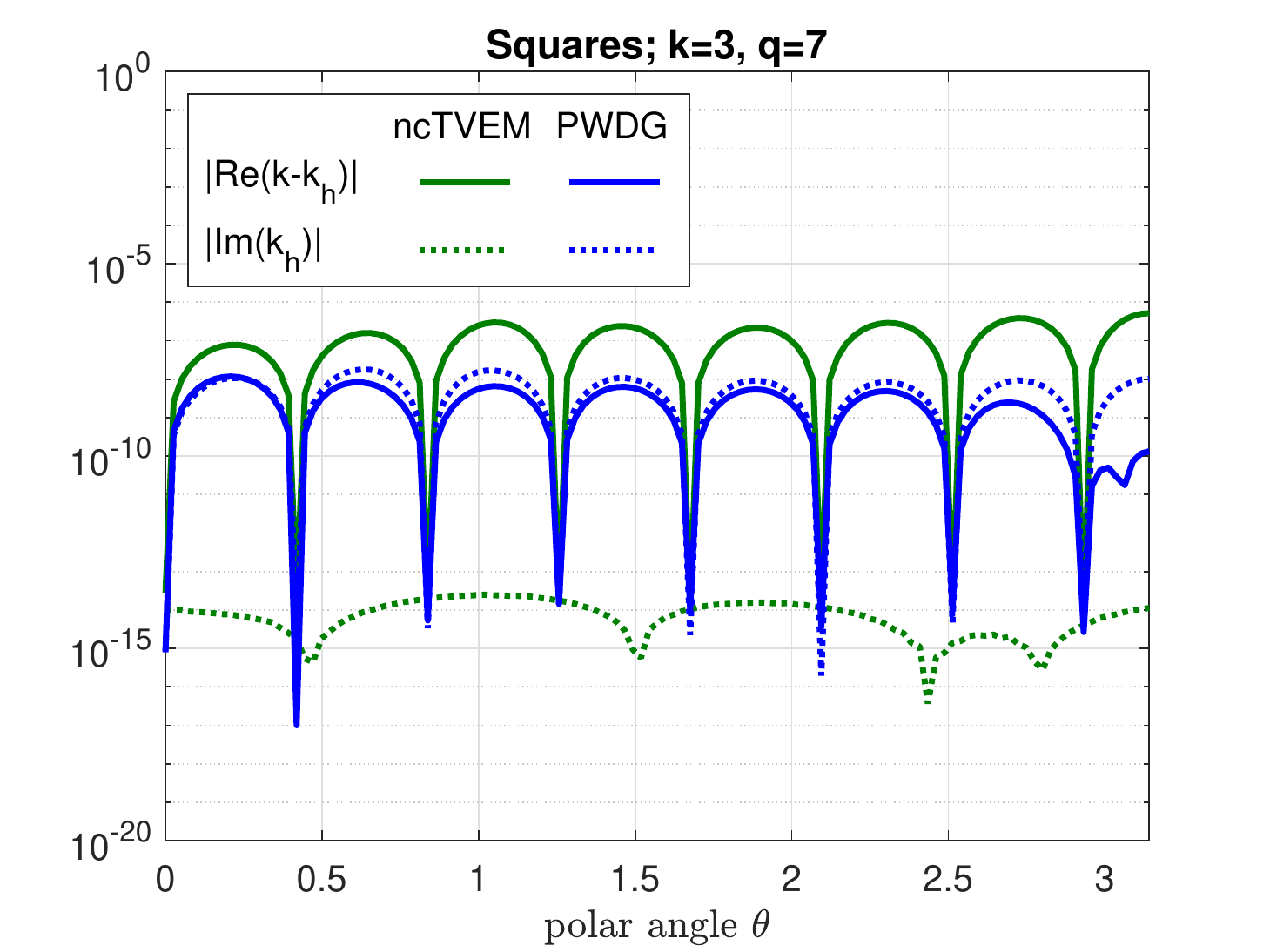}
\end{minipage}
\hfill
\begin{minipage}{0.475\textwidth}
\centering
\includegraphics[width=\textwidth]{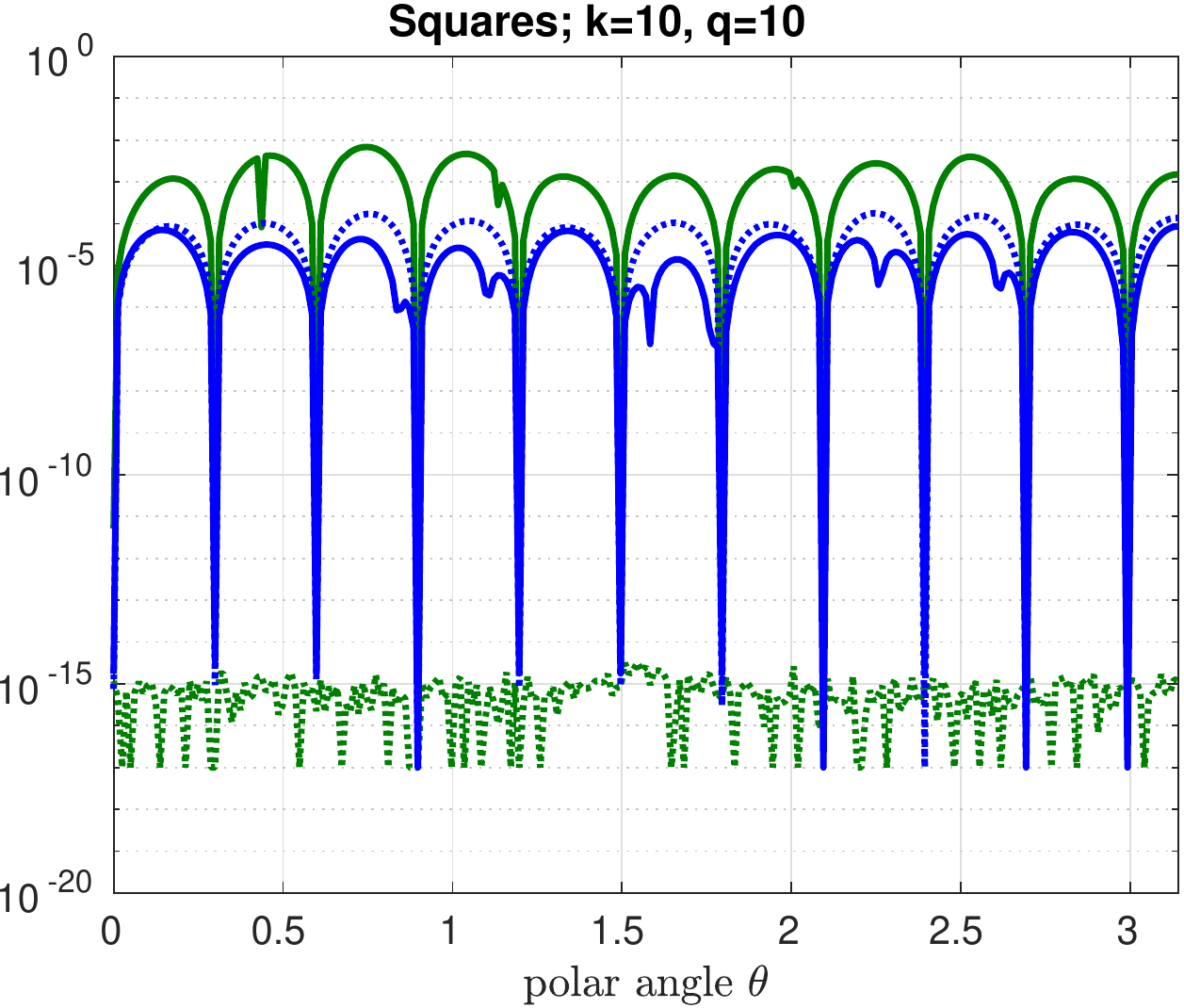}
\end{minipage}
\end{center}
\caption{Dispersive and dissipative behaviour of PWDG and ncTVEM in dependence on the polar angle~$\theta$ of the Bloch wave direction~$\dir$ in~\eqref{Bloch_wave} on the meshes made of squares in Figure~\ref{fig:meshes},
with~$\k=3$ and~$q=7$ (\textit{left}), and~$k=10$ and~$q=10$ (\textit{right}).}
\label{fig:dispersion-angle-sq} 
\end{figure}

\begin{figure}[h]
\begin{center}
\begin{minipage}{0.475\textwidth} 
\centering
\includegraphics[width=\textwidth]{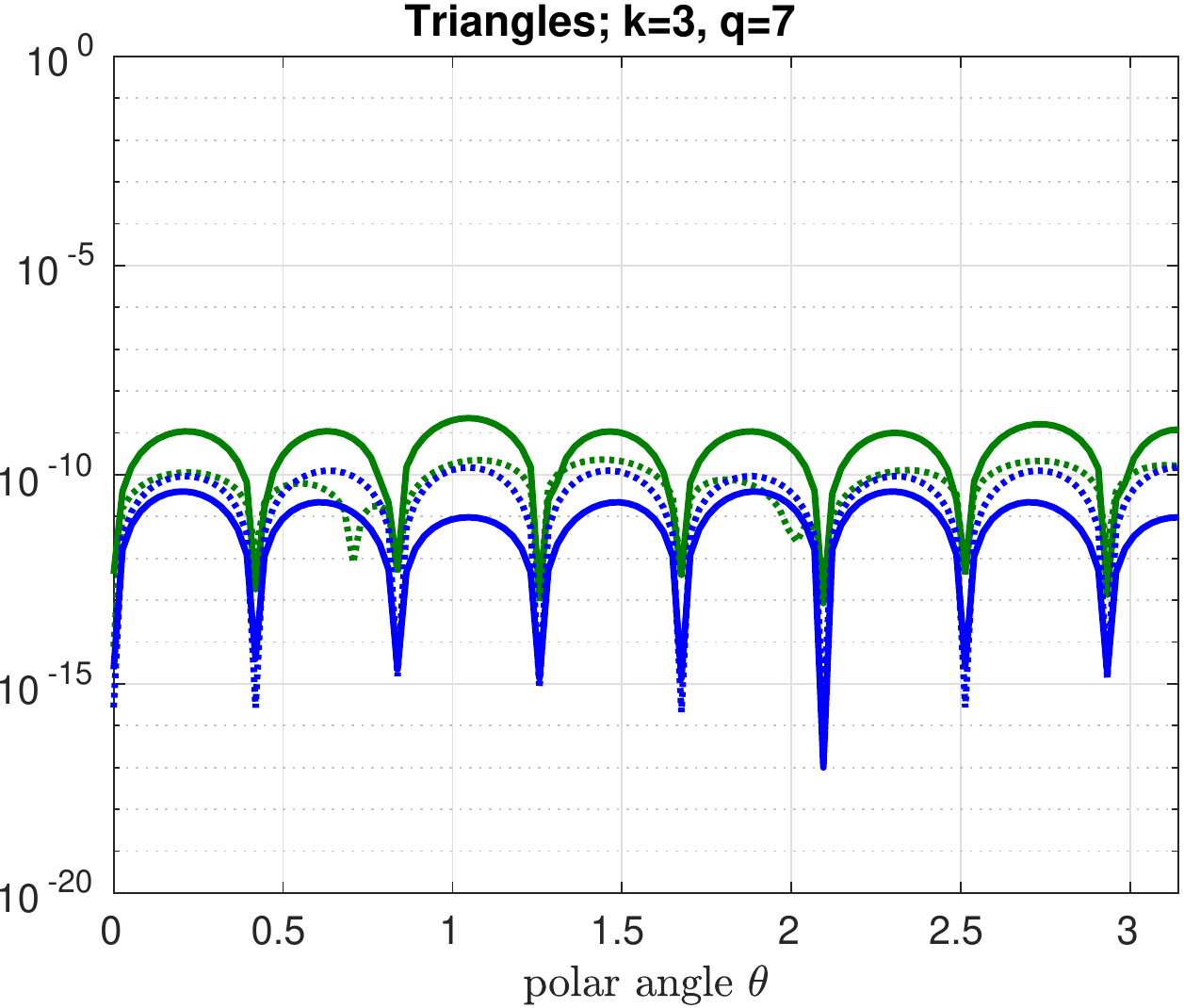}
\end{minipage}
\hfill
\begin{minipage}{0.475\textwidth}
\centering
\includegraphics[width=\textwidth]{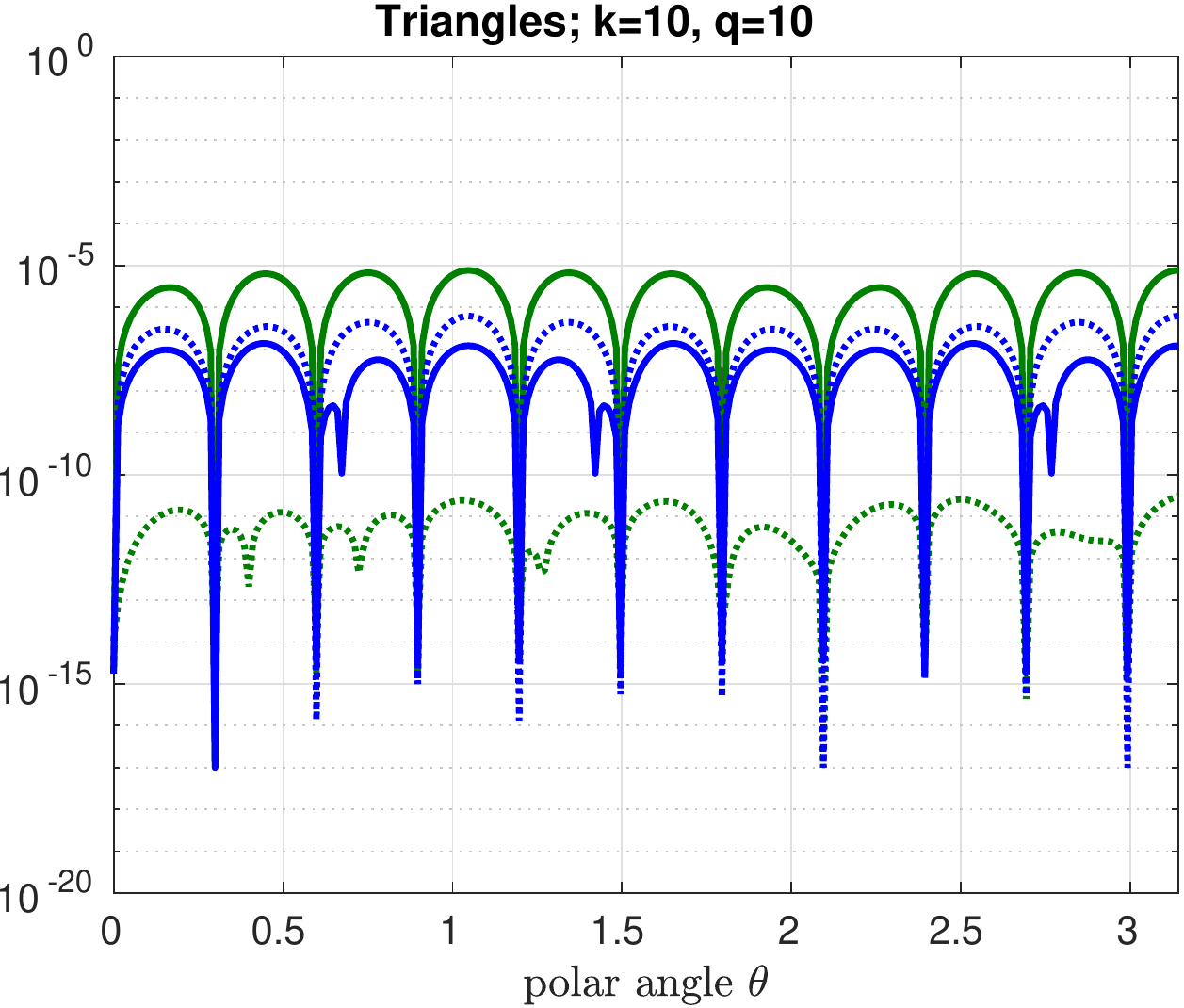}
\end{minipage}
\end{center}
\caption{Dispersive and dissipative behaviour of PWDG and ncTVEM in dependence on the polar angle $\theta$ of the Bloch wave direction $\dir$ in~\eqref{Bloch_wave} on the meshes made of triangles in Figure~\ref{fig:meshes}, with $k=3$ and $q=7$ (\textit{left}), and $k=10$ and $q=1$ (\textit{right}). The colour legend is the same as in Figure~\ref{fig:dispersion-angle-sq}.}
\label{fig:dispersion-angle-tri} 
\end{figure}

\begin{figure}[h]
\begin{center}
\begin{minipage}{0.475\textwidth} 
\centering
\includegraphics[width=\textwidth]{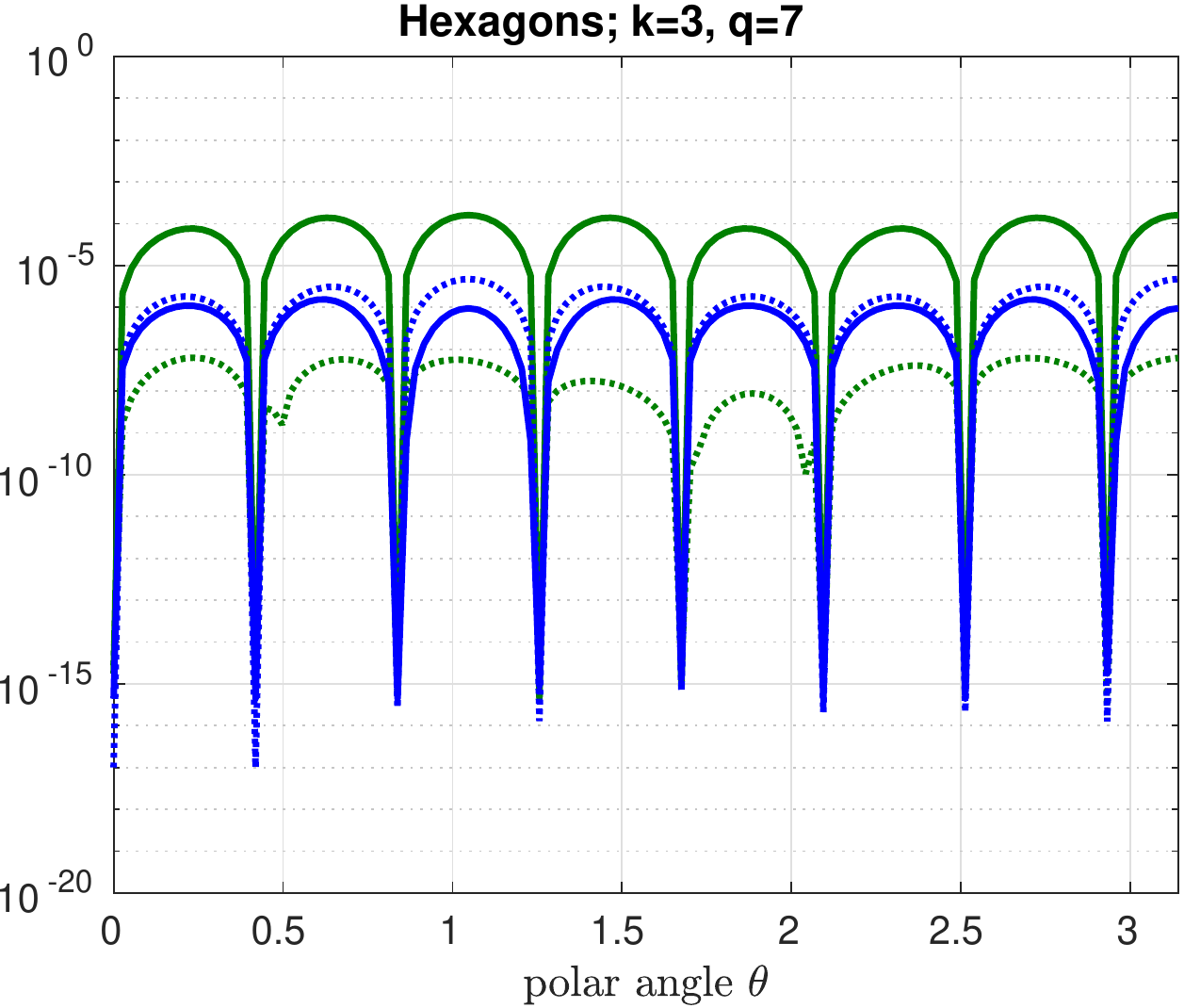}
\end{minipage}
\hfill
\begin{minipage}{0.475\textwidth}
\centering
\includegraphics[width=\textwidth]{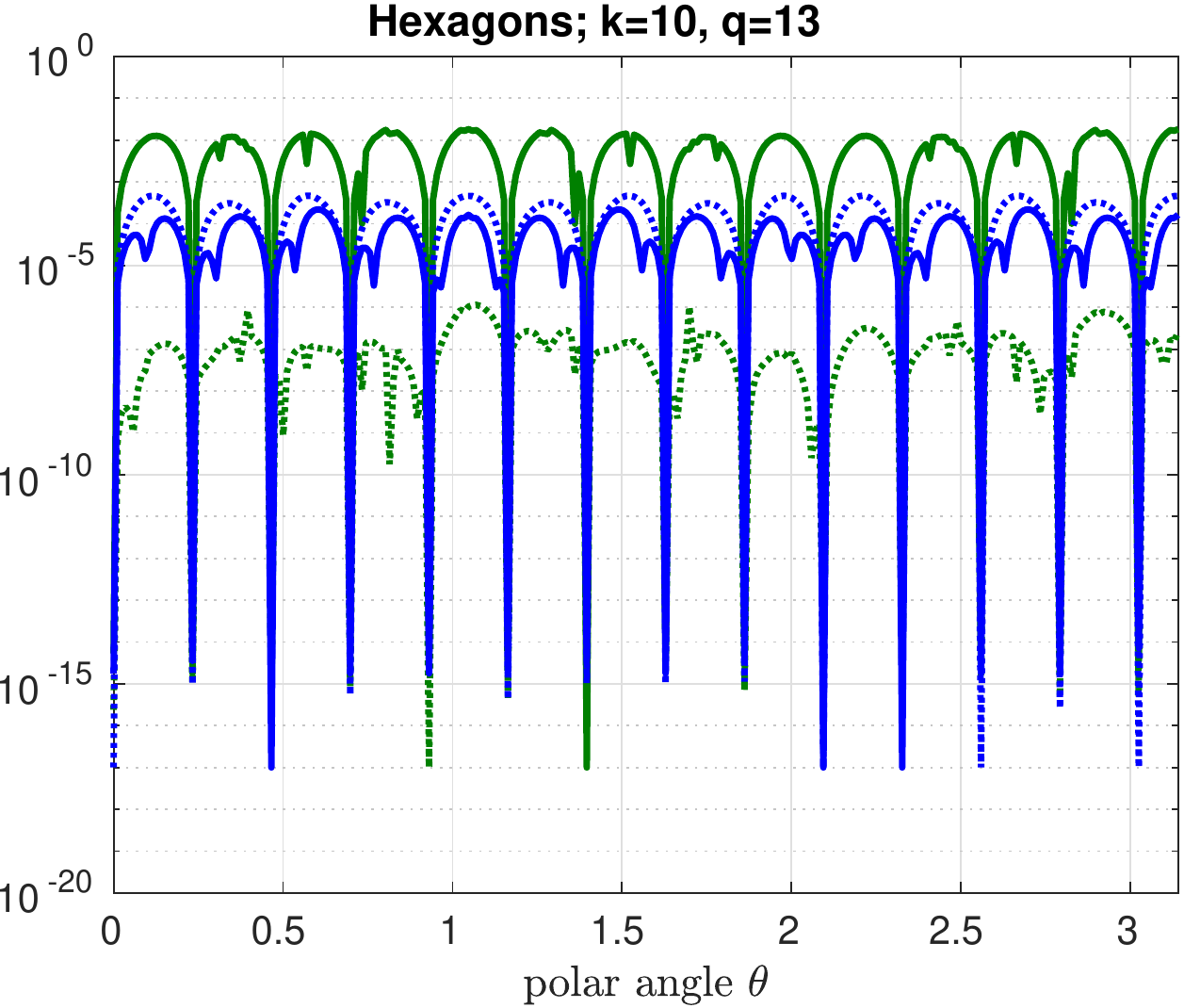}
\end{minipage}
\end{center}
\caption{Dispersive and dissipative behaviour of PWDG and ncTVEM in dependence on the polar angle~$\theta$ of the Bloch wave direction~$\dir$ in~\eqref{Bloch_wave} on the meshes made of hexagons in Figure~\ref{fig:meshes}, with~$\k=3$ and $q=7$ (\textit{left}),
and~$\k=10$ and~$q=13$ (\textit{right}). The colour legend is the same as in Figure~\ref{fig:dispersion-angle-sq}.}
\label{fig:dispersion-angle-hex} 
\end{figure}

The dispersion and dissipation are zero, up to machine precision, for choices of the Bloch wave direction~$\dir$ in~\eqref{Bloch_wave} coinciding with one of the plane wave directions $\{\djj\}_{j=1}^p$
(here we always took equidistributed directions $\djj$, where $\textup{\textbf{d}}_1=(1,0)$).
This follows directly from the fact that, in this case, the Bloch wave satisfying~\eqref{var_form} coincides with the corresponding plane wave traveling along the direction $\dir$.
Moreover, for the ncTVEM, the dispersion error dominates the dissipation error, whereas for PWDG the dissipation dominates the dispersion.

\subsubsection{Exponential convergence of the dispersion error against the effective degree~$q$} \label{subsubsection:disp_vs_q}
Here, we investigate the dependence of dispersion and dissipation on the effective plane wave degree~$q$ (namely, $p=2q+1$ bulk plane waves).
For fixed wave number~$\k$, we will observe exponential convergence of the total error for increasing~$q$, as already seen in~\cite{gittelson} for PWDG.
This result is not unexpected since also the $\p$-versions for the discretization errors have exponential convergence, provided that the exact analytical solution is smooth;
see~\cite{TDGPW_pversion} for PWDG, and the numerical experiments in~\cite{TVEM_Helmholtz} for the ncTVEM, respectively.
Moreover, we will make a comparison of these methods in terms of the total error versus the dimensions of the minimal generating subspaces.

To this purpose, we consider the following range for the wave number: $k \in \{2,3,4,5\}$. We recall again that~$k$ corresponds to~$k_0=\frac{k}{h_0}$ on a mesh with mesh size~$h_0$. 

\paragraph*{Dispersion and dissipation vs. effective degree $q$.}

In Figures~\ref{fig:disp_diss_q_sq}-\ref{fig:disp_diss_q_hex}, the relative dispersion error $|\Real(k-\kn)|/k$ and the relative damping error $|\Imag(\kn)|/k$ are displayed against~$q$, for the meshes made of squares, triangles, and hexagons, respectively.
The maxima of the relative dispersion and the relative dissipation, respectively, are taken over a large set of Bloch wave directions~$\dir$.
After a preasymptotic regime, we observe exponential convergence of the dispersion error for all methods and the dissipation error for the PWDG.
The dispersion error is consistently smaller for the PWDG than for the ncTVEM.

\begin{figure}[h]
\begin{center}
\begin{minipage}{0.485\textwidth} 
\centering
\includegraphics[width=\textwidth]{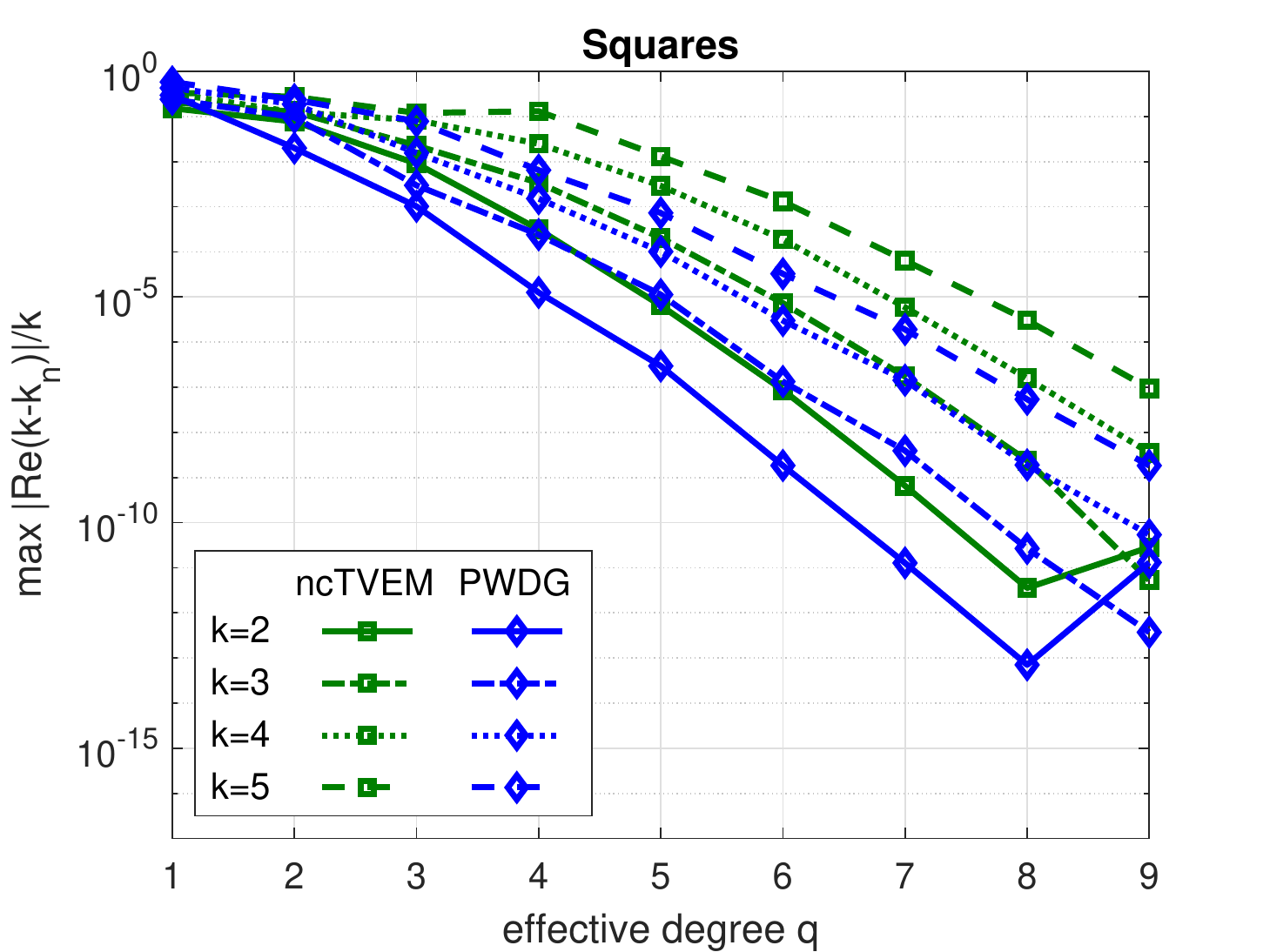}
\end{minipage}
\hfill
\begin{minipage}{0.485\textwidth}
\centering
\includegraphics[width=\textwidth]{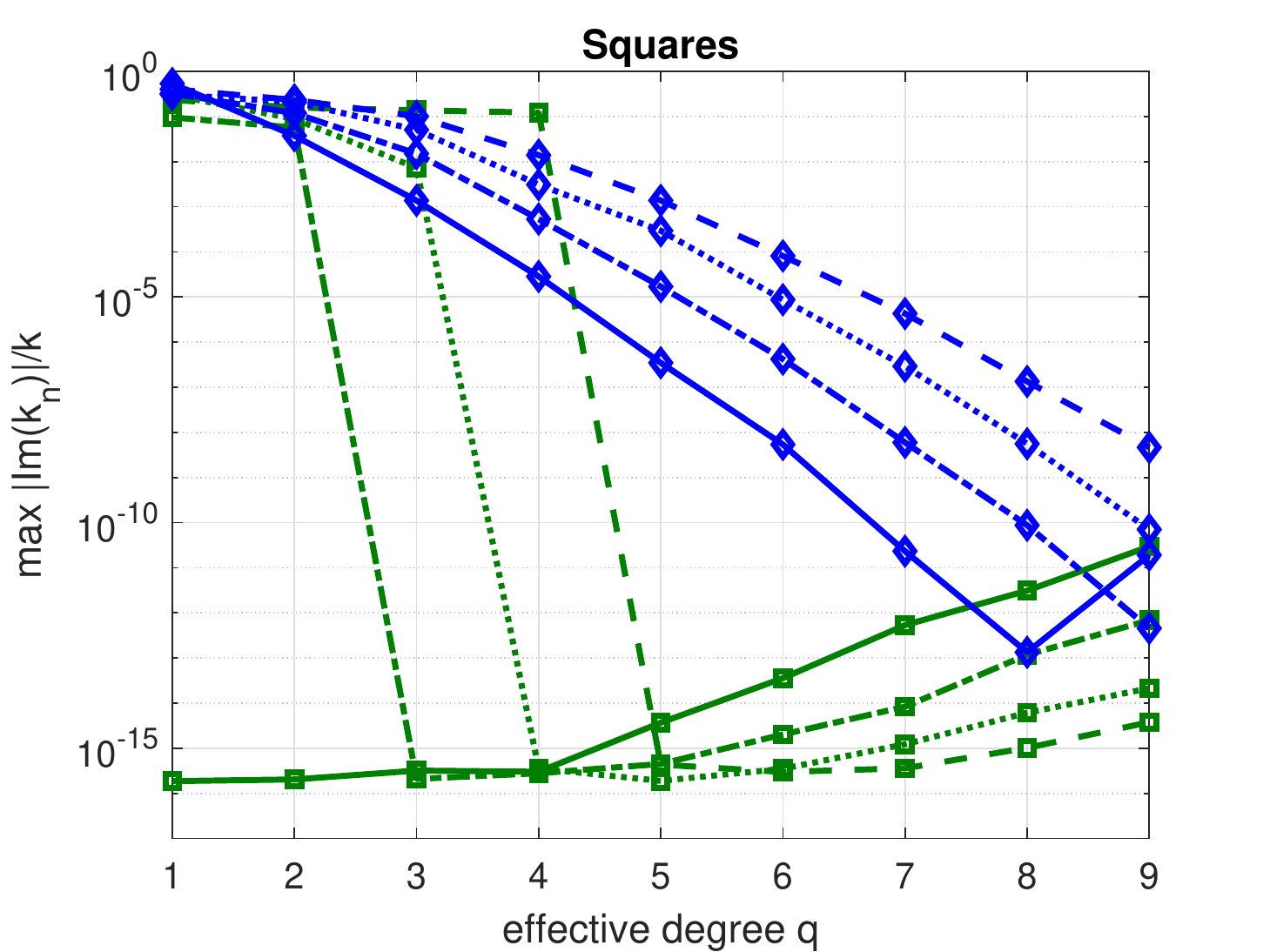}
\end{minipage}
\end{center}
\caption{Relative dispersion (\textit{left}) and relative dissipation (\textit{right}) for the different methods in dependence on the effective degree $q$ and the wave numbers $k=2,\dots,5$ on the meshes made of squares in Figure~\ref{fig:meshes}.
The maxima over a large set of Bloch wave directions $\dir$ are taken.}
\label{fig:disp_diss_q_sq} 
\end{figure}

\begin{figure}[h]
\begin{center}
\begin{minipage}{0.485\textwidth} 
\centering
\includegraphics[width=\textwidth]{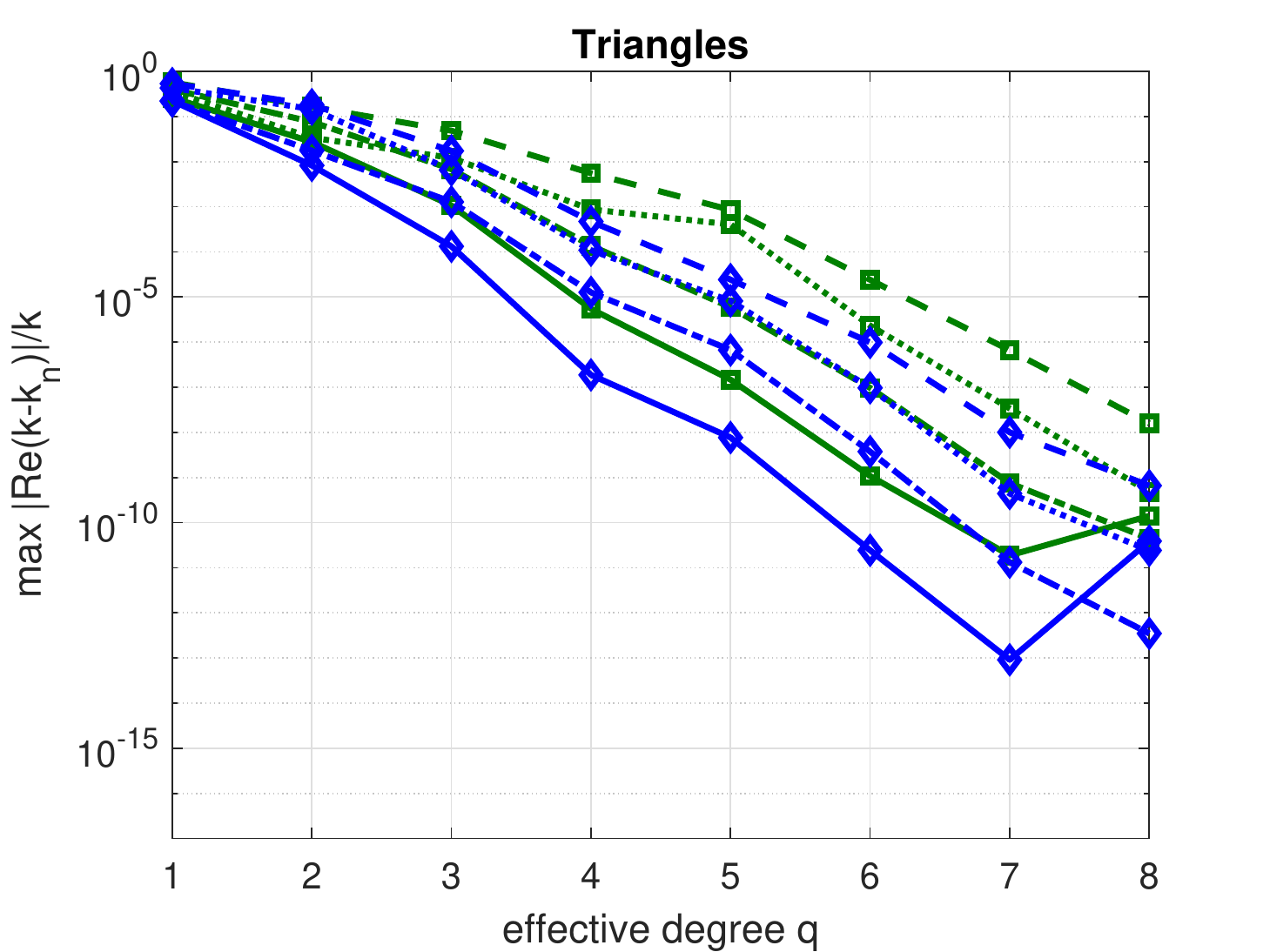}
\end{minipage}
\hfill
\begin{minipage}{0.485\textwidth}
\centering
\includegraphics[width=\textwidth]{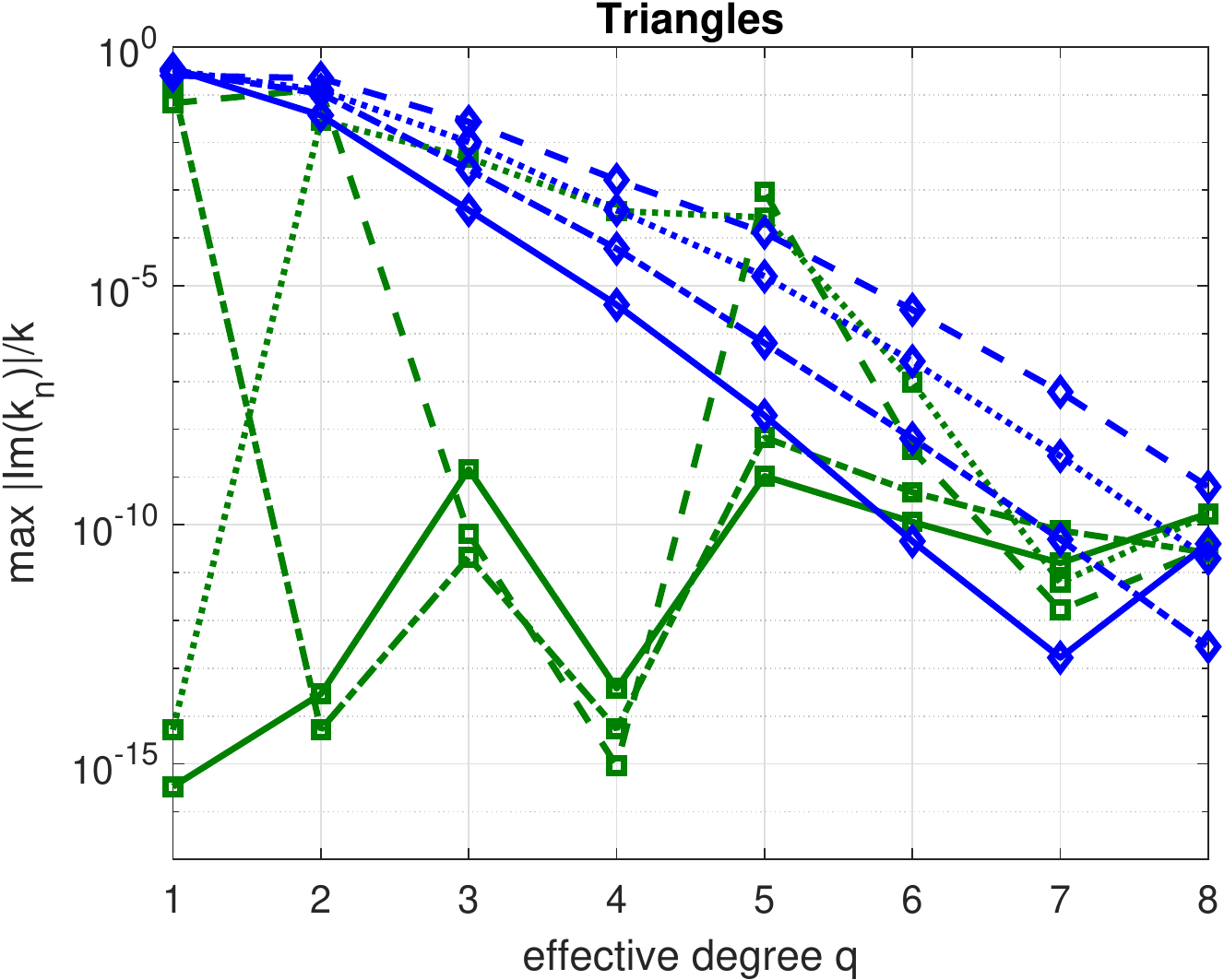}
\end{minipage}
\end{center}
\caption{Relative dispersion (\textit{left}) and relative dissipation (\textit{right}) for the different methods in dependence on the effective degree $q$ and the wave numbers $k=2,\dots,5$ on the meshes made of triangles in Figure~\ref{fig:meshes}.
The maxima over a large set of Bloch wave directions $\dir$ are taken. The colour legend is the same as in Figure~\ref{fig:disp_diss_q_sq}.}
\label{fig:disp_diss_q_tri} 
\end{figure}

\begin{figure}[h]
\begin{center}
\begin{minipage}{0.485\textwidth} 
\centering
\includegraphics[width=\textwidth]{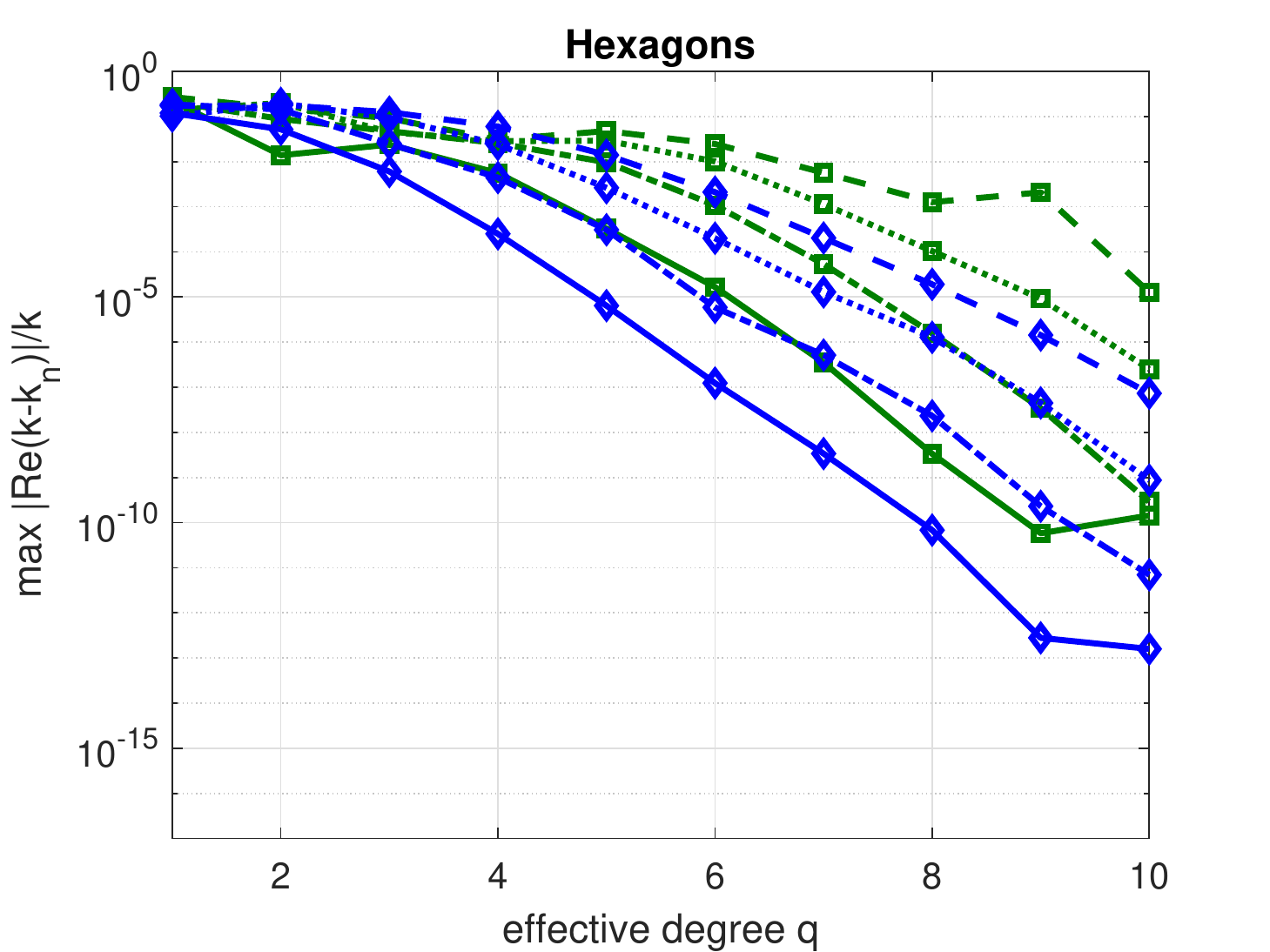}
\end{minipage}
\hfill
\begin{minipage}{0.485\textwidth}
\centering
\includegraphics[width=\textwidth]{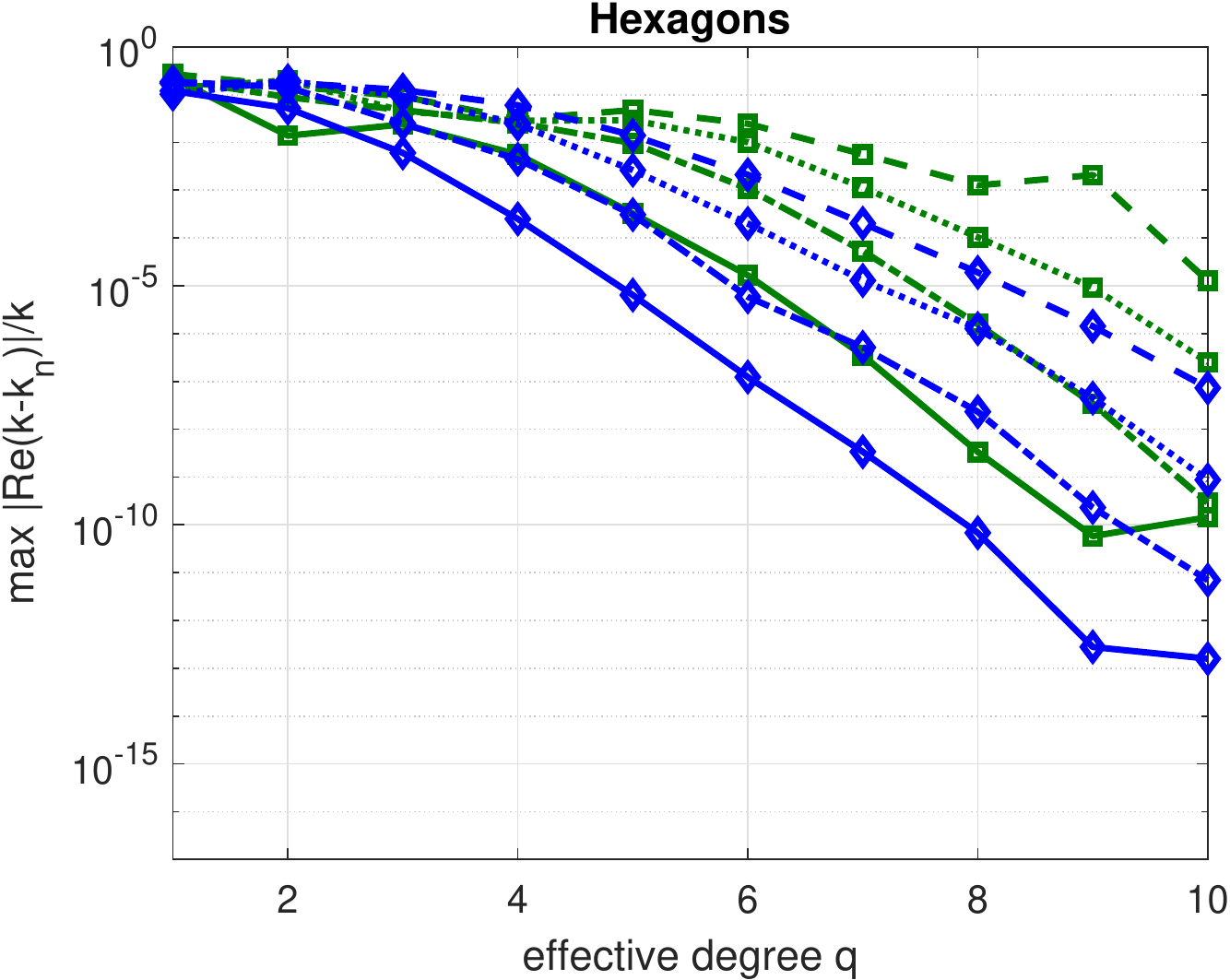}
\end{minipage}
\end{center}
\caption{Relative dispersion (\textit{left}) and relative dissipation (\textit{right}) for the different methods in dependence on the effective degree $q$ and the wave numbers $k=2,\dots,5$ on the meshes made of hexagons in Figure~\ref{fig:meshes}.
The maxima over a large set of Bloch wave directions $\dir$ are taken. The colour legend is the same as in Figure~\ref{fig:disp_diss_q_sq}.}
\label{fig:disp_diss_q_hex} 
\end{figure}

\paragraph*{Dispersion and dissipation vs. dimensions of minimal generating subspaces.}
From a computational point of view, it is also important to consider a comparison of the dispersion errors in terms of the dimensions of the minimal generating subspaces (density of the degrees of freedom).
We directly compare the relative total dispersion errors $|\kn-\k|/\k$, thus measuring the total deviation of the discrete wave number from the continuous one.
As above, the maxima over a large set of Bloch wave directions are taken. In Figure~\ref{fig:rel_dispersion_nr_dofs}, those errors are displayed for the meshes in Figure~\ref{fig:meshes}.
For the ncTVEM, we can recognize the \textit{cliff-edge effect}, meaning that, at some point, the dispersion error decreases without increase of the dimension of the minimal generating subspace.
This effect has already been remarked in~\cite{TVEM_Helmholtz_num, fluidfluid_ncVEM} for the discretization error and is a peculiarity of the orthogonalization-and-filtering process mentioned in~\cite[Algorithm~1]{TVEM_Helmholtz_num}.
Moreover, we can observe a direct correlation between the density of the degrees of freedom, which depends on the shape of the meshes, see Figures~\ref{fig:dispersion-squares}-\ref{fig:dispersion-hexagons}, 
and the error plots (larger cardinalities of the fundamental sets lead to larger errors; as mentioned above, for ncTVEM, the filtering process leads to dimensionality reductions).

\begin{figure}[h]
\centering
\includegraphics[width=0.485\textwidth]{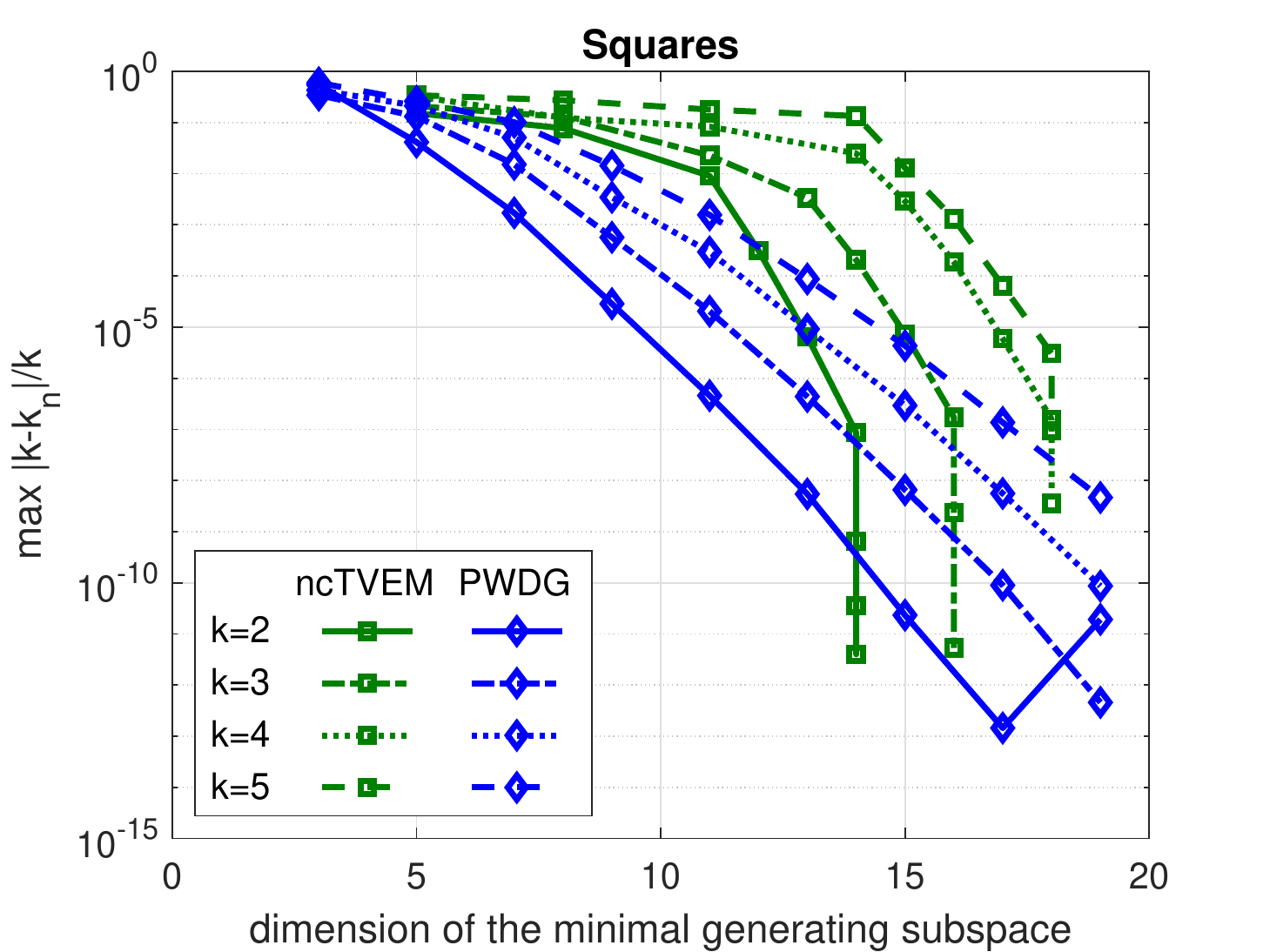}
\hfill
\includegraphics[width=0.485\textwidth]{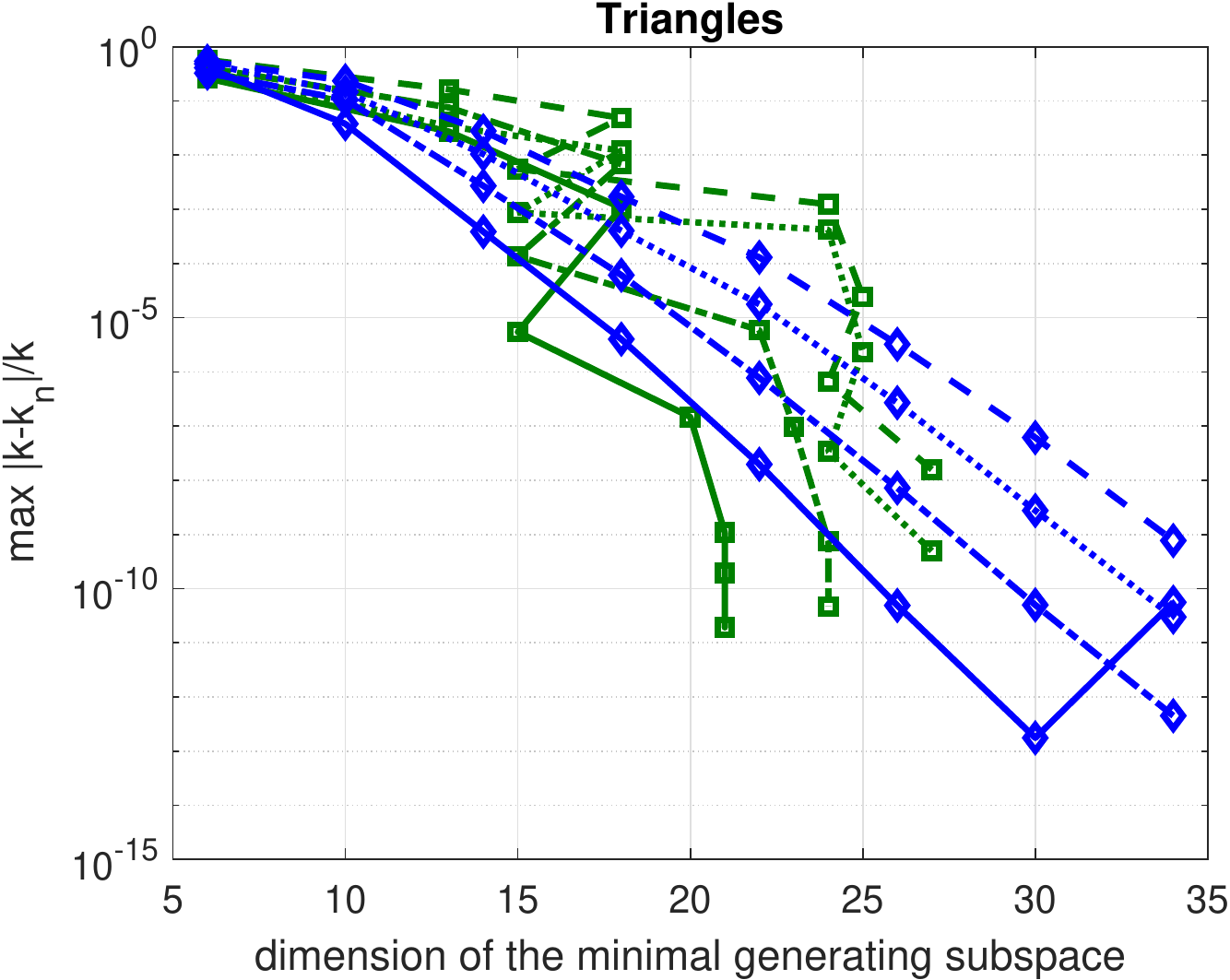}
\hfill
\includegraphics[width=0.485\textwidth]{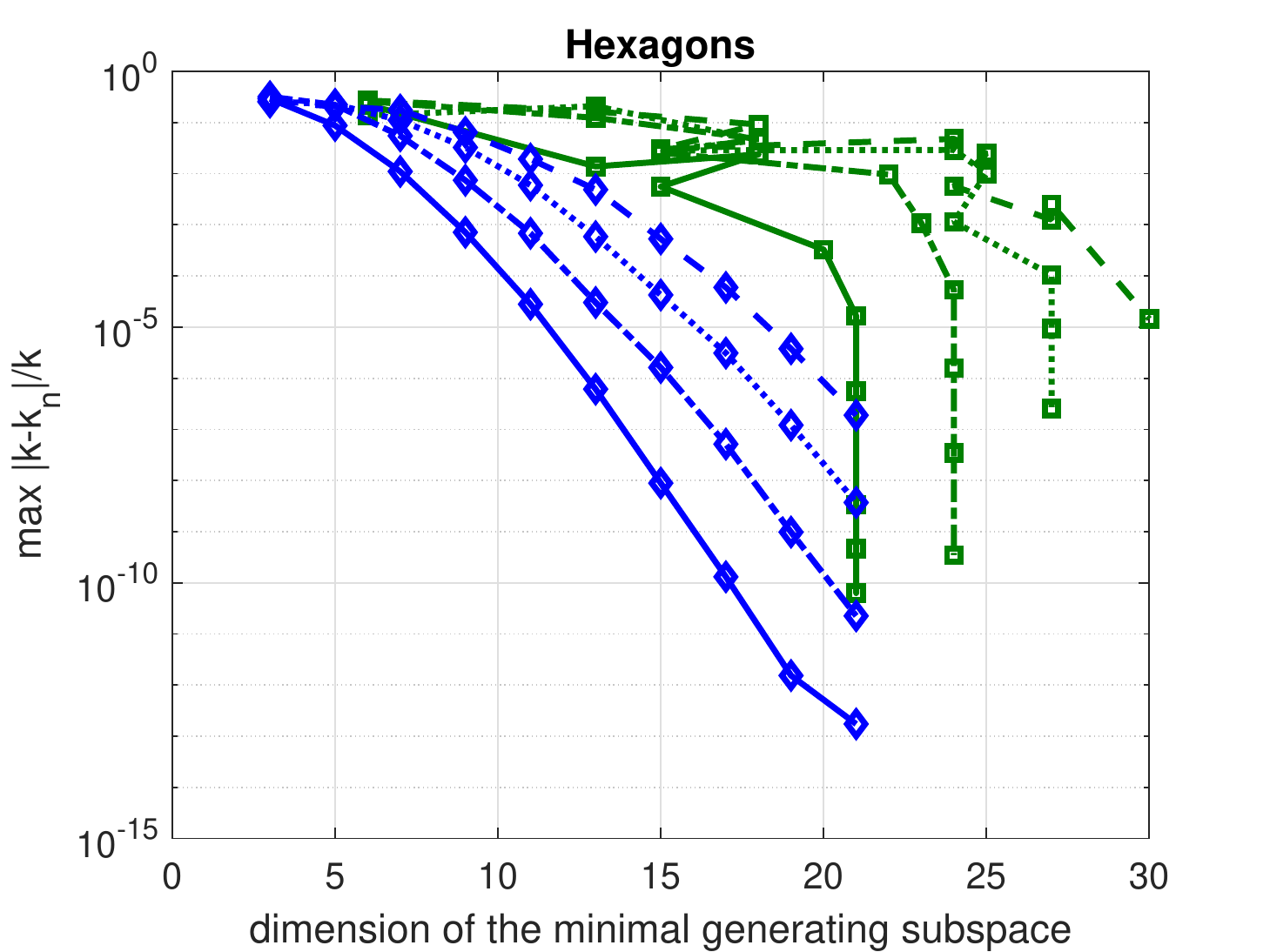}
\caption{Relative total dispersion error in dependence on the dimensions of the minimal generating subspaces for different values of~$k$ on the meshes in Figure~\ref{fig:meshes}.}
\label{fig:rel_dispersion_nr_dofs}
\end{figure}

\paragraph*{Comparison with the standard FEM.}

Here, we highlight the advantages of using full Trefftz methods (ncTVEM, PWDG) in comparison to standard polynomial based methods, such as the FEM,
whose dispersion properties were studied in, e.g., \cite{deraemaeker,sauterpollution,ihlenburg1995dispersion,ainsworth}.
For simplicity, we focus on the meshes made of squares in Figure~\ref{fig:meshes}, since, in this case, the basis functions in the FEM
have a tensor product structure and an explicit dispersion relation can be derived~\cite[Theorem 3.1]{ainsworth}:
\begin{equation} \label{eq:disp_rel_FEM}
\cos(\kn)=R_q(k),
\end{equation}
where, denoting by $[\cdot/\cdot]_{z \cot z}$ and $[\cdot/\cdot]_{z \tan z}$ the Pad\'e approximants to the functions $z \cot z$ and $z \tan z$, respectively,
\begin{equation*}
R_q(2 z):=\frac{[2N_0/2N_0-2]_{z \cot z}-[2N_e+2/2N_e]_{z \tan z}}{[2N_0/2N_0-2]_{z \cot z}+[2N_e+2/2N_e]_{z \tan z}},
\end{equation*}
with $N_0:=\lfloor (q+1)/2 \rfloor$ and $N_e:=\lfloor q/2 \rfloor$.
From~\eqref{eq:disp_rel_FEM}, one can see that only dispersion plays a role in the FEM.
In Figure~\ref{fig:disp_diss_q_sq FEM}, we display the relative total dispersion errors against the effective degree~$q$ (left) and against the dimensions of the minimal generating subspaces (right) for fixed $k=3$.
Similar results are obtained for other values of $k$ and are not shown. One can clearly notice that the dispersion error for the FEM is lower than for the other methods,
when comparing it in terms of~$q$, but higher, when comparing it in terms of the dimensions of the minimal generating subspaces.   

\begin{figure}[h]
\begin{center}
\begin{minipage}{0.485\textwidth} 
\centering
\includegraphics[width=\textwidth]{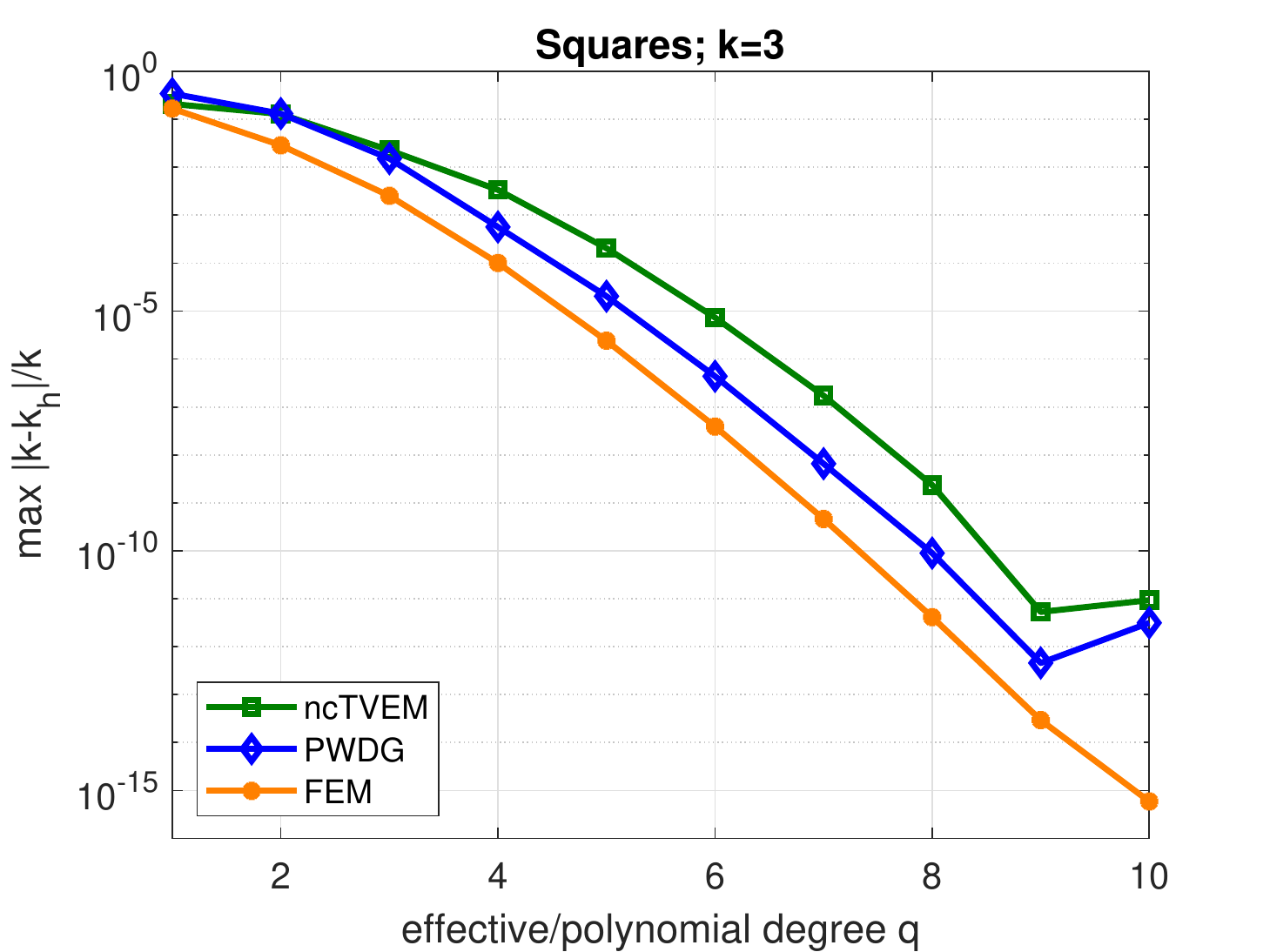}
\end{minipage}
\hfill
\begin{minipage}{0.485\textwidth}
\centering
\includegraphics[width=\textwidth]{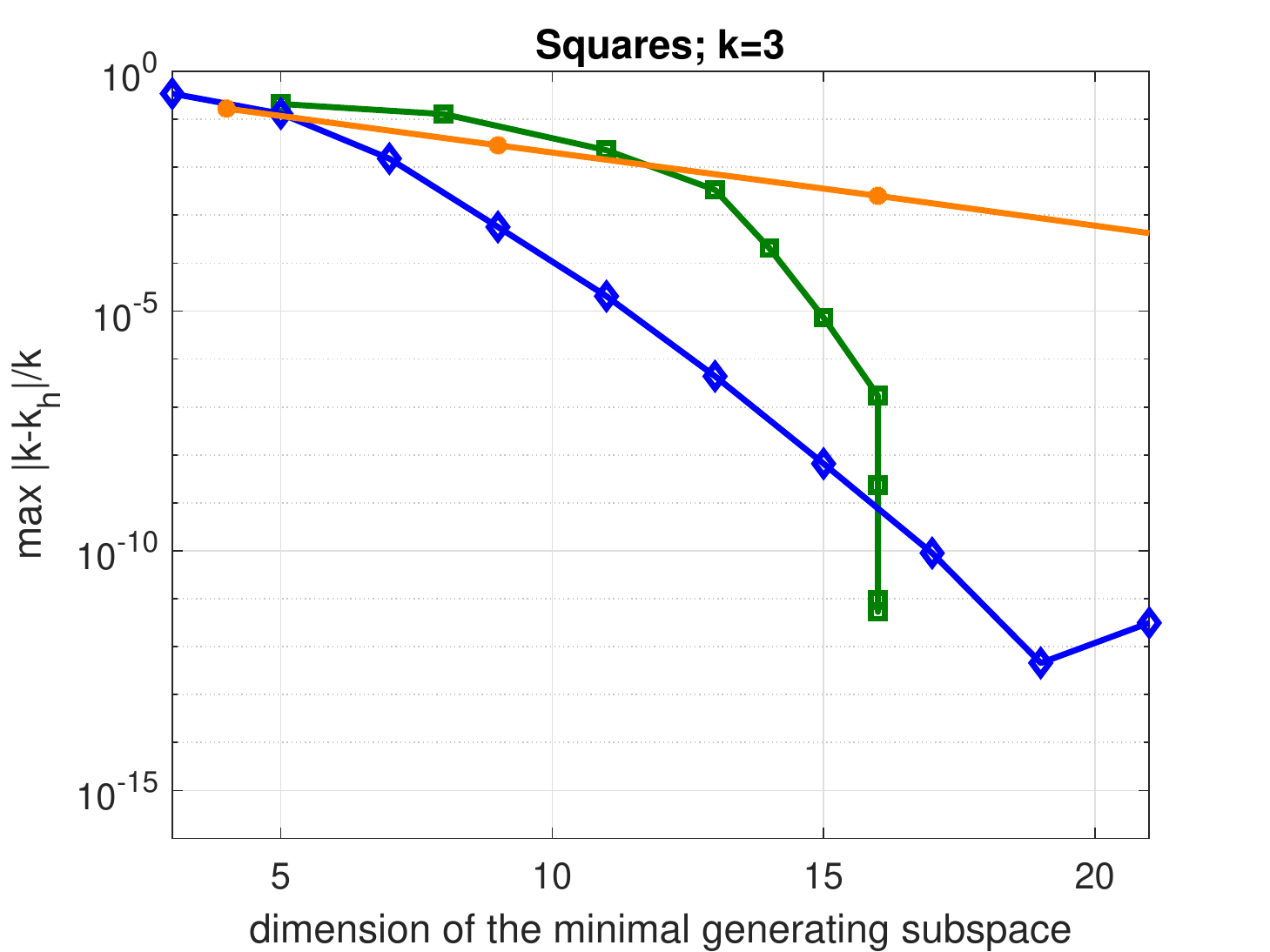}
\end{minipage}
\end{center}
\caption{Comparison of the relative total dispersion errors for ncTVEM, PWDG, and the standard polynomial based FEM on a mesh made of squares as in Figure~\ref{fig:meshes} for fixed wave number $k=3$,
in dependence on the effective/polynomial degree~$q$ (\textit{left}) and the dimension of the minimal generating subspaces (\textit{right}). The maxima over a large set of Bloch wave directions~$\dir$ are taken.}
\label{fig:disp_diss_q_sq FEM} 
\end{figure}

\subsubsection{Algebraic convergence of the dispersion error against the wave number~$\k$} \label{subsubsection:disp_vs_k}
We study the dispersion and dissipation properties of the three methods with respect to the wave number~$k$.
Due to the fact that~$h=1$, and~$k$ is related to the wave number~$k_0$ on a mesh with mesh size $h_0$ by $k=kh=k_0 h_0$, the limit $k \to 0$ corresponds in fact to an $h$-version with $h_0 \to 0$ for fixed $k_0$.
We will observe algebraic convergence of the total dispersion error in terms of~$k$.
This resembles the algebraic convergence of the discretization error in the $h$-version, proved in~\cite{TVEM_Helmholtz_num} and~\cite{GHP_PWDGFEM_hversion} for the ncTVEM and the PWDG, respectively.  

For the numerical experiments, we fix the effective degrees $q=3,5,7$. We employ once again the meshes made of squares and triangles in Figure~\ref{fig:meshes}. Similar results have been obtained on the mesh made of hexagons. 
In Figure~\ref{fig:comparison k}, the relative total errors $|k-\kn|/k$ determined over a large set of Bloch wave directions $\dir$ are depicted against~$k$. Algebraic convergence can be observed. Furthermore, larger values of~$q$ lead to smaller errors.
The peaks occurring in the convergence regions of the ncTVEM could be related to the presence of Neumann eigenvalues, and Dirichlet and Neumann eigenvalues,
that have to be excluded in the construction of the ncTVEM, respectively, in order to have a well-posed variational formulation.
Moreover, the oscillations for larger and smaller values of $k$ are related to the pre-asymptotic regime and the instability regime, which are typical of wave based methods.

In Table~\ref{tab:comparison-k}, we list some relative total errors for different values of~$k$. They indicate a convergence behaviour of
\begin{equation} \label{eq:k_kn_keta}
\max \frac{|k-\kn|}{|k|} \approx \mathcal{O}(k^{\eta}), \quad k \to 0,
\end{equation}
where $\eta \in [2q-1,2q]$. This was already observed in~\cite{gittelson} for PWDG.

\begin{table}[htb]
	\centering
	\begin{tabular}{c|c||c|c|c|c|c||c|c|c|c|c|}
		& \multirow{2}{*}{method} & \multicolumn{5}{|c||}{squares} & \multicolumn{5}{|c|}{triangles} \\ 
		\cline{3-12}
		& & $k$ & $\frac{|k-\kn|}{k}$ & $k$ & $\frac{|k-\kn|}{k}$ & rate & $k$ & $\frac{|k-\kn|}{k}$ & $k$ & $\frac{|k-\kn|}{k}$ & rate \\ 
		\hline
		\multirow{3}{*}{$q=3$} & PWVEM & 2 & 1.50e-03 & 0.3 & 4.59e-08 & 5.48 & 2 & 2.71e-04 & 0.3 & 3.42e-09 & 5.95 \\ 
		\cline{2-12}
		& ncTVEM & 2 & 9.04e-03  & 0.3 & 3.69e-07 & 5.33 & 2 & 1.07e-03 & 0.3 & 4.09e-08 & 5.36 \\ 
		\cline{2-12}
		& PWDG & 2 & 1.71e-03 & 0.3 & 1.04e-07 & 5.11 & 2 & 3.87e-04 & 0.3 & 3.04e-08 & 4.98 \\ 
		\hline \hline 
		\multirow{3}{*}{$q=5$} & PWVEM & 2 & 3.68e-06 & 0.8 & 5.09e-10 & 9.70 & 3 & 2.17e-05 & 2 & 4.54e-07 & 9.53 \\ 
		\cline{2-12}
		& ncTVEM & 2 & 6.48e-06 & 0.8 & 1.21e-09 & 9.37 & 3 & 5.91e-06 & 2 & 1.47e-07 & 9.11 \\ 
		\cline{2-12}
		& PWDG & 2 & 4.56e-07 & 0.8 & 1.47e-10 & 8.77 & 3 & 7.75e-07 & 2 & 1.97e-08 & 9.06 \\ 
		\hline \hline 
		\multirow{3}{*}{$q=7$} & PWVEM & 4 & 1.55e-05 & 2 & 2.23e-09 & 12.76 & 6 & 7.79e-05 & 4 & 5.57e-07 & 12.19 \\ 
		\cline{2-12}
		& ncTVEM & 4 & 5.93e-06 & 2 & 6.54e-10 & 13.15 & 6 & 6.01e-06 & 4 & 3.39e-08 & 12.77 \\ 
		\cline{2-12}
		& PWDG & 4 & 2.92e-07 & 2 & 2.33e-11 & 13.62 & 6 & 7.10e-07 & 4 & 2.76e-09 & 13.69 \\ 
	\end{tabular} 
	\caption{Rates of the relative total error for $k \to 0$.} \label{tab:comparison-k}
\end{table}

\begin{figure}[h]
\begin{center}
\begin{minipage}{0.485\textwidth} 
\centering
\includegraphics[width=\textwidth]{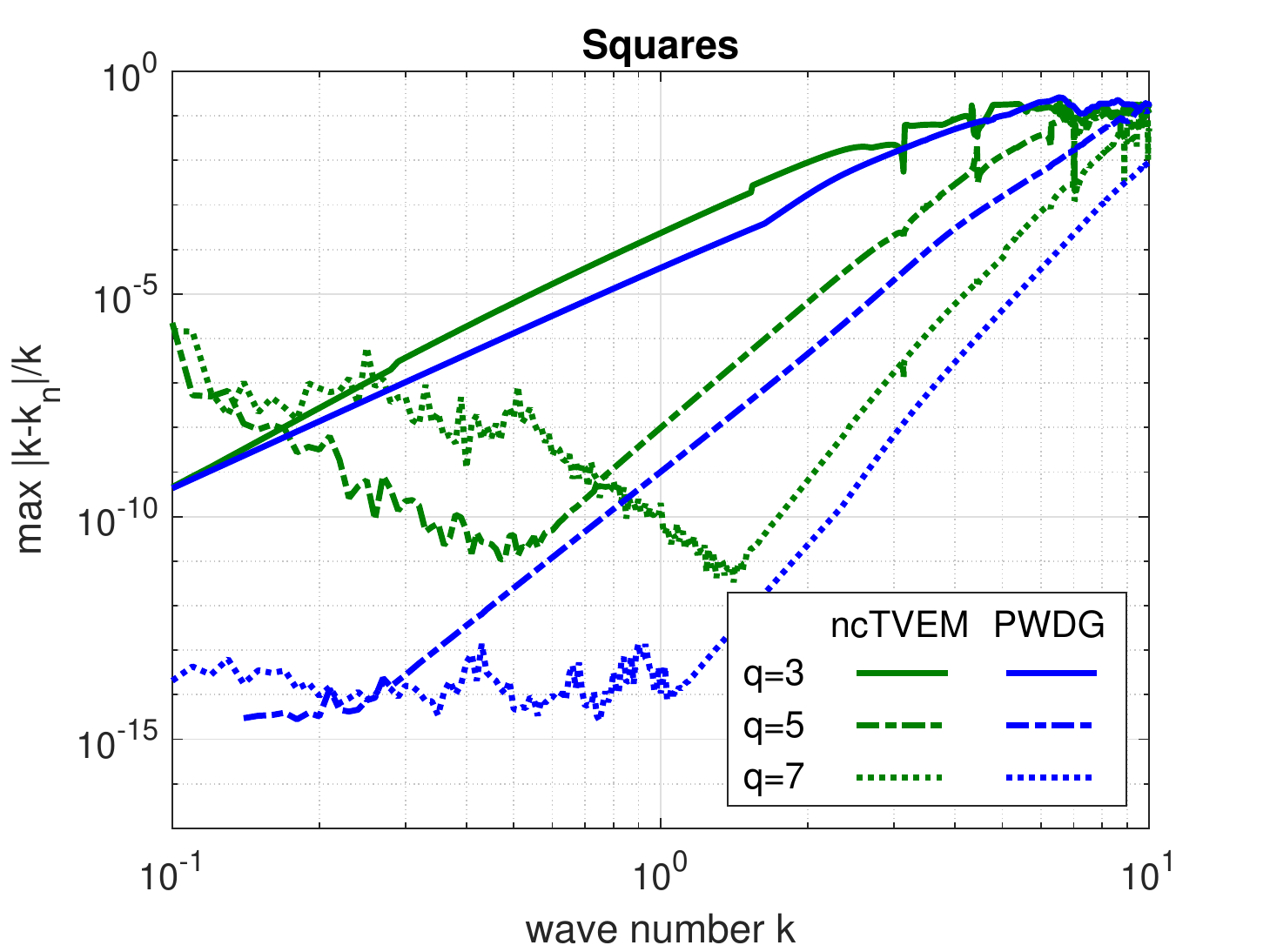}
\end{minipage}
\hfill
\begin{minipage}{0.485\textwidth}
\centering
\includegraphics[width=\textwidth]{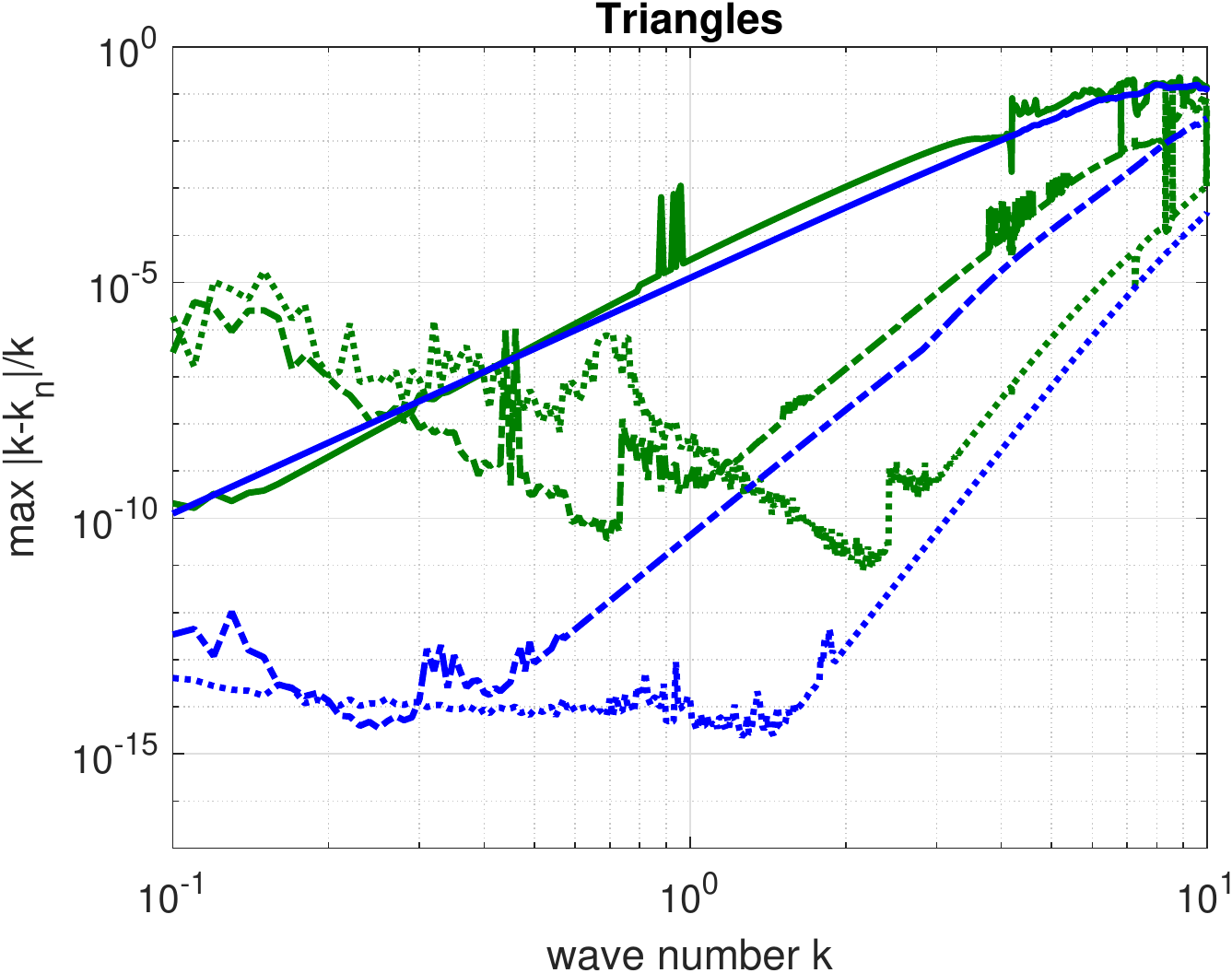}
\end{minipage}
\end{center}
\caption{Relative total dispersion in dependence on the wave number~$k$ for fixed effective degrees $q=3,5,7$.
The maxima over a large set of Bloch wave directions $\dir$ are taken. As meshes, those made of squares (\textit{left}) and triangles (\textit{right}) in Figure~\ref{fig:meshes} are employed.}
\label{fig:comparison k} 
\end{figure}

\begin{remark}
Clearly, similarly as above, dispersion and dissipation can be investigated again separately from each other. Here, we only show the results, depicted in Figure~\ref{fig:comparison k_disp_diss_q3}, for fixed~$q=5$ and varying~$k$ on the meshes made of squares.
As already observed, one can deduce that the ncTVEM are dispersion dominated, whereas dissipation plays a major role for the PWDG.
\end{remark}

\begin{figure}[h]
\begin{center}
\begin{minipage}{0.485\textwidth} 
\centering
\includegraphics[width=\textwidth]{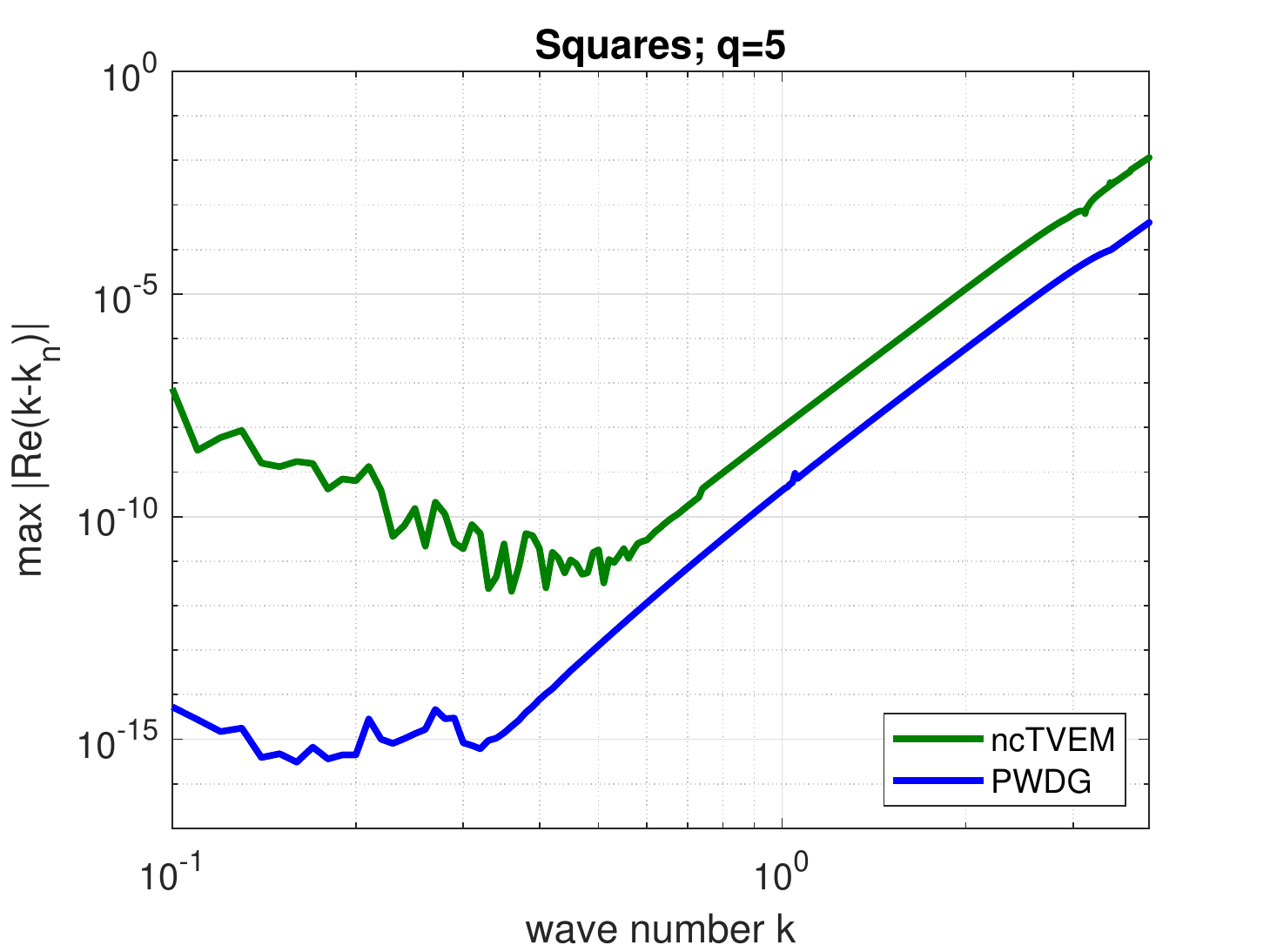}
\end{minipage}
\hfill
\begin{minipage}{0.485\textwidth}
\centering
\includegraphics[width=\textwidth]{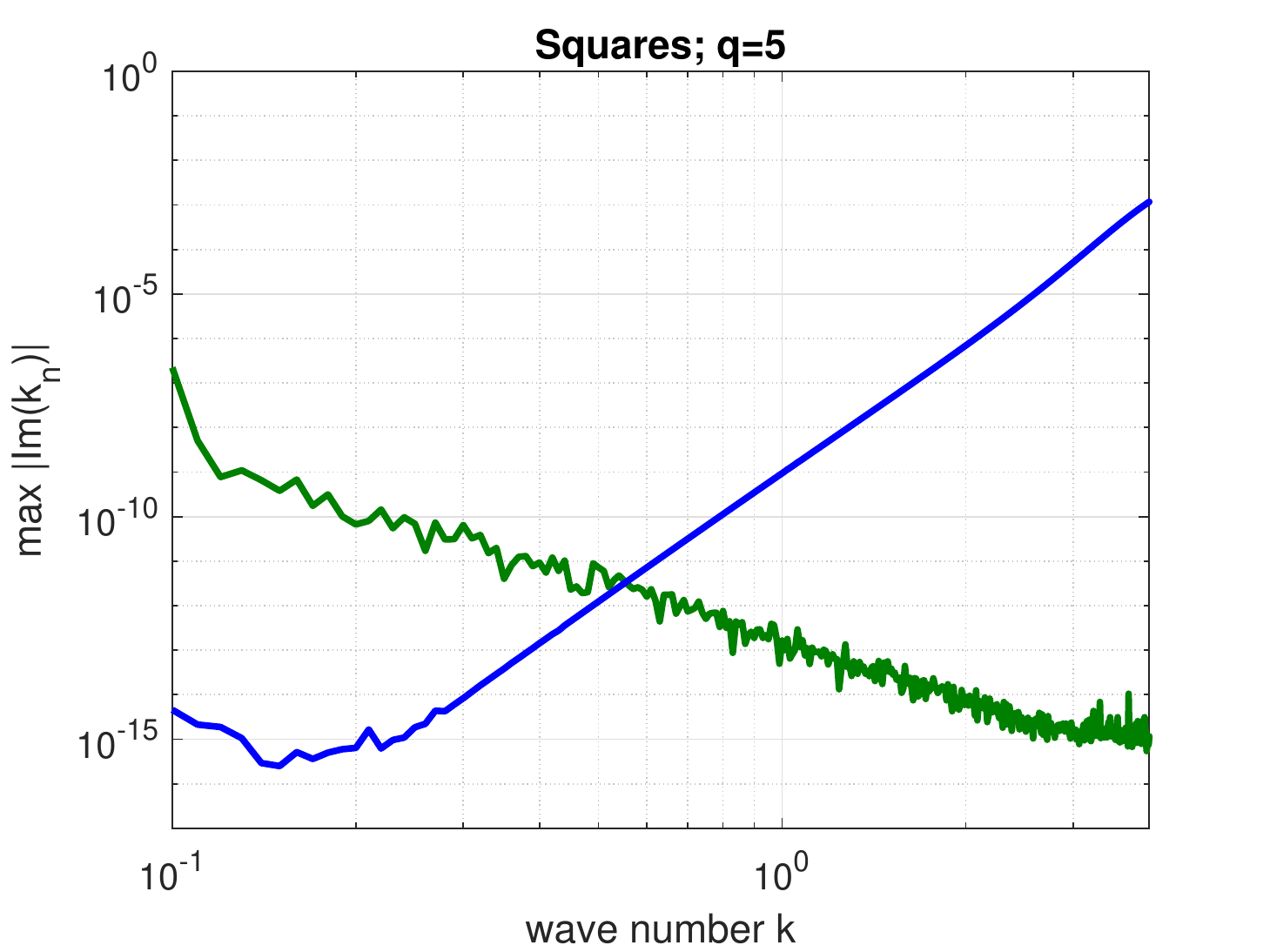}
\end{minipage}
\end{center}
\caption{Relative dispersion (\textit{left}) and relative dissipation (\textit{right}) in dependence on the wave number~$k$ for fixed $q=5$ on the meshes made of squares in Figure~\ref{fig:meshes}. The maxima over a large set of Bloch wave directions $\dir$ are taken.}
\label{fig:comparison k_disp_diss_q3} 
\end{figure}

%%%%%%%%%%%%%%%%%%%%%%%%%%%%%%%%%%%%%%%%%%%%%%%%%%%%%%%%%%%%%%%%%%%%%%%%%%%
\section*{Acknowledgements}
%%%%%%%%%%%%%%%%%%%%%%%%%%%%%%%%%%%%%%%%%%%%%%%%%%%%%%%%%%%%%%%%%%%%%%%%%%%
I. Perugia has been funded by the Austrian Science Fund (FWF) through the projects F~65 and P~29197-N32.
L. Mascotto acknowledges the support of the Austrian Science Fund (FWF) through the project P~33477.

%%%%%%%%%%%%%%%%%%%%%%%%%%%%%%%%%%%%%%%%%%%%%%%%%%%%%%%%%%%%%%%%%%%%%%%%%%%
{\footnotesize
\bibliography{bibliogr}
}
\bibliographystyle{plain}
%%%%%%%%%%%%%%%%%%%%%%%%%%%%%%%%%%%%%%%%%%%%%%%%%%%%%%%%%%%%%%%%%%%%%%%%%%%

\end{document}